\numberwithin{equation}{section}
\renewcommand{\phi}{\varphi}
\newcommand{\WR}{\mathcal{WR}}
\renewcommand{\d }{{\rm d} }
\newcommand{\h}{\hookrightarrow_h}
\newcommand{\G }{Cay (G, X\sqcup H)}
\newcommand{\C }{\mathcal C}
\newcommand{\e }{\varepsilon }
\renewcommand{\P }{\mathcal P}
\renewcommand{\kappa }{\varkappa}
\renewcommand{\ll }{\langle\hspace{-.7mm}\langle }
\newcommand{\rr }{\rangle\hspace{-.7mm}\rangle }
\renewcommand{\wr}{{\,\rm wr\,}}
\newcommand{\NN}{\mathbb N}
\newcommand{\M}{\mathcal M}
\newcommand{\Nn}{\mathcal N}
\newcommand{\A}{\mathcal A}
\newcommand{\D}{\mathcal D}
\newcommand{\Q}{\mathcal Q}
\newcommand{\R}{\mathcal R}
\newcommand{\Ss}{\mathcal S}
\newcommand{\sP}{\mathscr P}
\newcommand{\sU}{\mathscr U}
\newcommand{\sN}{\mathscr N}
\newcommand{\sZ}{\mathscr Z}
\newcommand{\sR}{\mathscr R}
\newcommand{\sM}{\mathscr M}
\newcommand{\sS}{\mathscr S}
\newcommand{\ZZ}{\mathbb Z}
\newtheorem{thm}{Theorem}[section]
\newtheorem*{thm*}{Theorem}
\newtheorem{cor}[thm]{Corollary}
\newtheorem{lem}[thm]{Lemma}
\newtheorem{prop}[thm]{Proposition}
\newtheorem{conj}[thm]{Conjecture}
\theoremstyle{definition}
\newtheorem{defn}[thm]{Definition}
\newtheorem{ex}[thm]{Example}
\newtheorem{fact}[thm]{Fact}
\theoremstyle{remark}
\newtheorem{rem}[thm]{Remark}
\let\OLDthebibliography\thebibliography
\renewcommand\thebibliography[1]{
  \OLDthebibliography{#1}
  \setlength{\parskip}{1.5pt}
  \setlength{\itemsep}{1.5pt plus 0.3ex}
}
\begin{document}

\title{Wreath-like products of groups and their von Neumann algebras I: W$^\ast$-superrigidity}

\author{Ionu\c t Chifan, Adrian Ioana, Denis Osin and Bin Sun}

\date{}

\maketitle

\begin{abstract}
We introduce a new class of groups called {\it wreath-like products}. These groups are close relatives of the classical wreath products and arise naturally in the context of group theoretic Dehn filling. Unlike ordinary wreath products, many wreath-like products have Kazhdan's property (T).  In this paper, we  prove that any group $G$ in a natural family of wreath-like products with property (T) is W$^*$-superrigid: the group von Neumann algebra $\text{L}(G)$ remembers the isomorphism class of $G$.  This allows us to provide the first examples (in fact, $2^{\aleph_0}$ pairwise non-isomorphic examples) of W$^*$-superrigid groups with property (T). \end{abstract}

\vspace{3mm}



\section{Introduction}


The von Neumann algebra $\text{L}(G)$ of a countable discrete group $G$ is defined as the weak operator closure of the complex group algebra $\mathbb CG$ acting on the Hilbert space $\ell^2G$ by left convolution \cite{MvN36}. If $G$ is infinite abelian, then L$(G)$ is isomorphic (via the Fourier transform)  to L$^{\infty}([0,1])$. However, understanding how the isomorphism class of L$(G)$ depends on $G$ for noncommutative groups is a notoriously challenging problem, which has been at the forefront of research in operator algebras since the creation of the field. This problem is typically studied when L$(G)$ is simple, i.e. a II$_1$ factor, which is equivalent to $G$ having infinite conjugacy classes of non-trivial elements (abbreviated {ICC}).

The classification problem for group von Neumann algebras was first considered by Murray and von Neumann in \cite{MvN43}. They proved that L$(G)$ is isomorphic to their hyperfinite II$_1$ factor for any locally finite ICC group $G$, but not for the free group $G=\mathbb F_2$.  Three decades later, Connes' celebrated classification of injective factors \cite{Co76} showed that, more generally, all II$_1$ factors arising from ICC amenable groups $G$ are isomorphic to the hyperfinite II$_1$ factor. On the other hand, nonamenable groups were used to provide large classes of nonisomorphic II$_1$ factors in  \cite{Sc63,Mc69,Co75}.

The first instance of rigidity for von Neumann algebras was discovered by Connes in 1980. He showed that L$(G)$ has countable fundamental and outer automorphism groups for any ICC group $G$ with Kazhdan's property (T) \cite{Co80}. Shortly after, Connes \cite{Co82}  proposed the following far-reaching rigidity conjecture.

\begin{conj}[Connes Rigidity Conjecture]\label{Cconj}
If $G$ and $H$ are \emph{ICC} property (T) groups such that $\emph{L}(G)\cong\emph{L}(H)$, then $G\cong H$.
\end{conj}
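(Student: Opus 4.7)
The plan is to attack Conjecture \ref{Cconj} through Popa's deformation/rigidity theory, the framework that has produced essentially every known W$^\ast$-rigidity result for discrete groups. Given a $\ast$-isomorphism $\theta \colon L(G) \to L(H)$, the ultimate objective is to show that, after an inner perturbation and a character twist, $\theta$ carries the canonical group unitaries $\{u_g\}_{g \in G}$ onto $\{v_h\}_{h \in H}$, which immediately yields a group isomorphism $G \cong H$.

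The first step is to use property (T) to rigidify the isomorphism. Since $G$ has property (T), the inclusion $\mathbb{C}G \subset L(G)$ enjoys the Connes--Jones relative property (T): every completely positive deformation of $L(G)$ tending to the identity does so uniformly on $\mathbb{C}G$. Transported across $\theta$, this severely restricts the position of $\theta(\mathbb{C}G)$ inside $L(H)$, and the target group's own property (T) supplies a matching rigidity inside $L(H)$. With the ICC hypothesis ensuring both algebras are II$_1$ factors, one would then bring Popa's intertwining-by-bimodules technique to bear in an attempt to conjugate $\theta(\mathbb{C}G)$ onto the canonical copy $\mathbb{C}H$, or a comparable distinguished subalgebra, by a unitary of $L(H)$.

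The decisive step is intrinsic reconstruction: identifying the group unitaries $\{v_h\}_{h \in H}$ as a canonically defined subset of $L(H)$ characterized purely by von Neumann algebraic data. Candidates to exploit include the trace-vanishing property $\tau(v_h) = 0$ for $h \neq e$ coming from ICC, the discreteness of $H$ inside the unitary group in the Hilbert--Schmidt topology, and the spectral-gap consequences of property (T), which should prevent approximate group-like unitaries from masquerading as genuine ones.

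The main obstacle, and the reason Conjecture \ref{Cconj} has resisted proof for more than four decades, is precisely this reconstruction step. No intrinsic characterization of group unitaries is known that succeeds for an arbitrary ICC property (T) group, and all existing W$^\ast$-superrigidity results---including the one obtained in this paper for wreath-like products---bypass the difficulty by assuming additional algebraic structure that provides a foothold. A complete proof would therefore require either a genuinely new von Neumann algebraic invariant that canonically detects the group inside $L(G)$, or a structural theorem asserting that every ICC property (T) group carries some such recoverable decoration; both remain entirely open.
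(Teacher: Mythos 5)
You have correctly recognized that the statement labeled Conjecture~\ref{Cconj} is precisely that --- a \emph{conjecture} --- and that the paper does not (and does not claim to) prove it. There is no proof of this statement in the paper to compare against: the paper only restates Connes' 1982 conjecture and then proves a special case of it, namely Theorem~\ref{superr}, which establishes W$^\ast$-superrigidity for the particular class of wreath-like products $G\in\WR(A,B\curvearrowright I)$ with $A$ abelian, $B$ an ICC subgroup of a hyperbolic group with amenable stabilizers, and $G$ having property~(T). Your assessment of the state of the art is accurate and honest: the conjecture has been open for over four decades, the deformation/rigidity framework is indeed the dominant methodology, and the missing ingredient is exactly what you identify --- an intrinsic, von Neumann algebraic characterization of the canonical group unitaries that works for an \emph{arbitrary} ICC property~(T) group, rather than for classes endowed with extra recoverable structure (wreath or wreath-like, generalized Bernoulli coefficients, etc.).

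One small refinement worth making: the specific route the paper takes for its special case is not "conjugating $\theta(\mathbb{C}G)$ onto $\mathbb{C}H$" but rather the comultiplication method of Popa--Vaes and Ioana--Popa--Vaes --- one transports the comultiplication $\Delta_0(v_h)=v_h\otimes v_h$ on $\text{L}(H)$ over to $\text{L}(G)\,\overline{\otimes}\,\text{L}(G)$, analyzes the position of $\Delta(\text{L}(A^{(I)}))$ using the Popa--Vaes hyperbolic-normalizer theorem and a new solidity result, discretizes $\Delta(G)$ modulo the Cartan, and finally untwists a cocycle by Popa-type cocycle superrigidity. Your sketch of "conjugate $\mathbb{C}G$ into $\mathbb{C}H$" is the informal shadow of what comultiplication achieves, but in practice one never works with the (non-closed, non-algebraic) set $\mathbb{C}H$ directly. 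This distinction does not affect your conclusion, which is correct: the general conjecture remains entirely open and no proof exists or is offered in the paper.
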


Supportive evidence for this conjecture was provided in the 1980s by results  in \cite{CJ85,Po86,CH89}.  In particular, Cowling and Haagerup \cite{CH89} proved that L$(G)\not\cong\text{L}(H)$, for any lattices $G<\text{Sp}(n,1)$, $H<\text{Sp}(m,1)$ for $n\not=m$. More recently, the existence of uncountably many nonisomorphic property (T) group factors was proved in \cite{Oz02}. In addition, Connes' rigidity conjecture was shown to hold up to countable classes in \cite{Po06a} (see also \cite{IPV10}).

In the past two decades, there has been striking progress in the classification of group II$_1$ factors due to Popa's discovery of deformation/rigidity theory. In \cite{Po04}, Popa proved that the class $\mathcal G$ of wreath product groups  $\mathbb Z \wr \Gamma$ with $\Gamma$ ICC property (T) satisfies the following version of Connes' rigidity conjecture:  if $\text{L}(G)\cong\text{L}(H)$, for $G,H\in\mathcal G$, then $G\cong H$. Subsequently, several other classes of groups satisfying this property were found, e.g., in \cite{Po06b,PV06,Io06,IM19}.

By a result in \cite{CJ85}, if an ICC group $G$ has property (T), then so does any other group $H$ such that $\text{L}(G)\cong\text{L}(H)$. Thus, Connes' rigidity conjecture is equivalent to asking if every ICC property (T) group $G$ is W$^*$-{\it superrigid} in the sense that  L$(G)\cong$ L$(H)$  implies $G\cong H$ for any group $H$ (see \cite[Section 3]{Po06a}).
The first class of W$^*$-superrigid groups was discovered by Popa, Vaes and the second author in \cite{IPV10}, where a large class of generalized wreath groups were shown to have this property. Later on,  additional examples of W$^*$-superrigid groups were found in  \cite{BV13,Be14,CI17,CD-AD20,CD-AD21}.

Despite the remarkable breadth of Popa's deformation/rigidity theory, classifying von Neumann algebras of property (T) groups remained a longstanding challenge.
The reason is that the presence of deformations, which is at the heart of Popa's theory, typically excludes property (T).
In particular, the well-known problem of finding at least one W$^*$-superrigid ICC group with property (T) remained open.
 This problem has circulated among the experts since the reformulation of Connes' rigidity conjecture as a superrigidity question in \cite[Section 3]{Po06a} (see also  \cite[Section 1]{IPV10}, \cite[Section 6.2]{Io18}, \cite[Problem R.5]{Pe20}, \cite[Section 5]{Houd}). It was the main focus of a 2018 workshop at the American Institute of Mathematics where it was explicitly posed by Vaes as \cite[Problem 2.1]{AIM18}.

In this paper, we obtain the first examples of W$^*$-superrigid groups with property (T). To state our main results, we need an auxiliary definition.

\begin{defn}\label{wlp}
Let $A$, $B$ be arbitrary groups, $I$ an abstract set, $B\curvearrowright I$ a (left) action of $B$ on $I$.
We say that a group $W$ is a \emph{wreath-like product} of groups $A$ and $B$ corresponding to the action $B\curvearrowright I$ if $W$
is an extension of the form
\begin{equation}\label{ext}
1\longrightarrow \bigoplus_{i\in I}A_i \longrightarrow  W \stackrel{\e}\longrightarrow B\longrightarrow 1,
\end{equation}
where $A_i\cong A$ and the action of $W$ on $A^{(I)}=\bigoplus_{i\in I}A_i$ by conjugation satisfies the rule
\begin{equation}
wA_iw^{-1} = A_{\e(w)\cdot i}
\end{equation}
for all $i\in I$. The map $\e\colon W\to B$ is called the {\it canonical homomorphism associated with the wreath-like structure of $W$}.

If the action $B\curvearrowright I$ is regular (i.e., free and transitive), we say that $W$ is a \emph{regular wreath-like product} of $A$ and $B$. The set of all wreath-like  products of groups $A$ and $B$ corresponding to an action $B\curvearrowright I$ (respectively, all regular wreath-like products) is denoted by $\WR(A, B\curvearrowright I)$ (respectively, $\WR(A,B)$).
\end{defn}

The notion of a wreath-like product generalizes the ordinary (restricted) wreath product of groups. Indeed, for any groups $A$ and $B$, we obviously have $A\,{\rm wr}\, B\in \WR(A,B)$. Conversely, it is not difficult to show that $W\cong A\,{\rm wr}\, B$ whenever the extension (\ref{ext}) splits.

We are now ready to state the main result of our paper.

\begin{thm}\label{superr}
Let $A$ be a nontrivial abelian group, $B$ a nontrivial ICC subgroup of a hyperbolic group. Let $B\curvearrowright I$ be an action such that, for every $i\in I$, $\emph{Stab}_B(i)$ is amenable. Suppose that $G\in\WR (A,B\curvearrowright I)$ is a group with property (T).

If $H$ is any countable group and $\theta\colon \emph{L}(G)^t\rightarrow \emph{L}(H)$ is a $*$-isomorphism for some $t>0$, then $G\cong H$ and $t=1$. Moreover, there exist a group isomorphism $\delta\colon G\rightarrow H$, a character $\eta\colon G\rightarrow\mathbb T$ and a unitary $w\in \emph{L}(H)$ such that $\theta(u_g)=\eta(g)wv_{\delta(g)}w^*$ for every $g\in G$, where $(u_g)_{g\in G}$ and $(v_h)_{h\in H}$ denote the canonical generating unitaries of $\emph{L}(G)$ and $\emph{L}(H)$, respectively.
\end{thm}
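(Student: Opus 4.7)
The plan is to reconstruct the wreath-like structure of $G$ from the abstract II$_1$ factor $L(G)$ up to an inner automorphism and a character. Write $N = \bigoplus_{i \in I} A_i \trianglelefteq G$ for the base of the defining extension, so that $G/N \cong B$, and set $\A = L(N) \subset L(G) =: \M$. Because $A$ is abelian, $\A$ is diffuse abelian, and the conjugation action of $G$ on $N$ makes $\M$ a twisted crossed product of $\A$ by $B$ via a generalized Bernoulli-type action $B \curvearrowright I$ with amenable stabilizers. The objective is to show that any $\ast$-isomorphism $\theta : \M^t \to L(H)$ sends this decomposition, after unitary conjugation, to an analogous decomposition of $L(H)$ coming from a genuine group-theoretic wreath-like structure on $H$.

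The first and hardest step is an intrinsic identification: one must show that $\theta(\A^t)$ is unitarily conjugate inside $L(H)$ to $L(K)$ for some normal abelian subgroup $K \trianglelefteq H$ whose quotient $H/K$ acts on $K$ through an action isomorphic to $B \curvearrowright N$. Since $G$ has property (T), so does $L(G)$, hence $L(H)$, and hence $H$, which rules out the usual malleable deformations of Bernoulli type and blocks the direct Popa--Ioana--Vaes strategy used for ordinary wreath products. Instead, I would exploit the hyperbolicity of $B$ together with amenability of the stabilizers $\mathrm{Stab}_B(i)$: these should allow $\A$ to be characterized intrinsically as, up to unitary conjugacy, the unique maximal diffuse abelian subalgebra of $\M$ whose normalizer generates $\M$ and whose relative commutant structure fits the wreath-like template. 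The technical backbone here would combine Popa's intertwining-by-bimodules with a structural theorem for von Neumann algebras of extensions by hyperbolic groups, in the spirit of the authors' earlier work on acylindrically hyperbolic groups and Dehn filling. I would expect this characterization to be established as a separate structural theorem earlier in the paper and then invoked here.

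Once $\theta(\A^t) = u L(K) u^*$ is identified, the conjugation action of any lift of $H/K$ into $H$ on $L(K)$ corresponds under $\theta$ to the wreath-like action of $B$ on $\A$. I would then use cocycle superrigidity for property (T) group actions, combined with the ICC hyperbolic structure of $B$, to conclude that $H/K \cong B$ and that the induced action is conjugate to $B \curvearrowright N$. Property (T) enters decisively here: it trivializes the $2$-cocycle coming from the extension $1 \to K \to H \to H/K \to 1$, upgrading the isomorphism of crossed product algebras to a genuine group isomorphism $\delta : G \to H$. The amplification parameter must satisfy $t = 1$ because $L(N)$, as a generalized Bernoulli core over an ICC property (T) base, has trivial fundamental group and no nontrivial amplification that is still a group subalgebra. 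Finally, the explicit form $\theta(u_g) = \eta(g) w v_{\delta(g)} w^*$ is extracted by the standard argument that a $\ast$-isomorphism between two ICC group factors, once seen to send one set of canonical unitaries into the normalizer of the other, is given, after a unitary conjugation, by a character twist of an underlying group isomorphism.

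The main obstacle is clearly the first step. Without deformations available, the intrinsic recovery of the wreath-like base algebra $\A$ must be engineered from the geometric structure of $B$ (hyperbolicity plus amenable stabilizers plus ICC) together with the abelianness of $A$, and this is where the essential novelty of the wreath-like framework --- which draws on Dehn filling to produce groups with both property (T) and a wreath-like normal structure --- must carry the argument.
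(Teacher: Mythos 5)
There is a genuine gap, and it sits exactly where you locate ``the first and hardest step.'' You propose to show that $\theta(\mathrm{L}(A^{(I)})^t)$ is unitarily conjugate to $\mathrm{L}(K)$ for a normal abelian subgroup $K\lhd H$, and to obtain this from an intrinsic characterization of $\mathrm{L}(A^{(I)})\subset\mathrm{L}(G)$ as ``the unique maximal diffuse abelian subalgebra whose normalizer generates $\mathrm{L}(G)$'' (up to unitary conjugacy), expecting such a statement to be proved elsewhere in the paper. No such theorem exists in the paper, and none is available: $\mathrm{L}(A^{(I)})$ is indeed a Cartan subalgebra of $\mathrm{L}(G)$, but uniqueness of Cartan subalgebras up to unitary conjugacy is precisely what cannot be proved here, because property (T) of $G$ excludes the malleable deformations on which all known unique-Cartan theorems (e.g. \cite{PV12}) for crossed products rely; the hyperbolicity of $B$ gives information about subalgebras of crossed products \emph{by} $B$, not an intrinsic recognition of the base inside the abstract factor. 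The paper never proves, and never needs, that an abstract isomorphism carries the base algebra to a group subalgebra of $\mathrm{L}(H)$. Instead it runs the argument through the comultiplication $\Delta_0(v_h)=v_h\otimes v_h$ built from the \emph{unknown} group $H$, transported to an embedding $\Delta\colon\M\rightarrow p(\M\,\overline{\otimes}\,\M\,\overline{\otimes}\,\mathbb M_n(\mathbb C))p$ where the wreath-like structure of $G$ is visible on both tensor factors. One then shows, via Theorem \ref{relativeT} (Popa--Vaes applied through the quotient $G\to B\subset K$ hyperbolic), the transfer principle to the ordinary wreath product $A\wr B$ (Lemma \ref{transfer}, Corollary \ref{solidity}), the strong rigidity theorem for orbit equivalence embeddings (Theorem \ref{SOE}, extending \cite{Po04}), and cocycle superrigidity for actions built over $G\curvearrowright I$ (Theorem \ref{builtover}, extending \cite{Po05}), that after unitary conjugation $\Delta(u_g)=\xi_g(u_g\otimes u_g)$ for a character $\xi$, and the conclusion follows from \cite[Lemma 3.4]{IPV10}. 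In other words, the proof reconstructs the comultiplication on the known side rather than a wreath-like normal subgroup on the unknown side; your plan, as written, defers the entire difficulty to a nonexistent structural theorem, and the characterization you propose would in particular imply a unique-Cartan statement for property (T) factors that is far beyond current technology.

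Two secondary points are also off. Property (T) is not used to ``trivialize the $2$-cocycle of the extension $1\to K\to H\to H/K\to 1$''; it is used once for $B$ (through the relative property (T) hypothesis in Theorem \ref{SOE}, in the discretization step) and once, decisively, for $G$ itself, to untwist the $1$-cocycle $(w_g)_{g\in G}\subset\sU(\P\,\overline{\otimes}\,\P\,\overline{\otimes}\,\mathbb M_n(\mathbb C))$ appearing in $\Delta(u_g)=w_g(u_g\otimes u_g\otimes 1)$ via Theorem \ref{builtover}. And $t=1$ is not a consequence of a ``trivial fundamental group'' of $\mathrm{L}(A^{(I)})$ (which is abelian, so no such statement makes sense); in the paper $t=k=n$ drops out of the discretization argument, and finally $n=1$ because $1\,\overline{\otimes}\,1\,\overline{\otimes}\,\mathbb M_n(\mathbb C)\subset\Delta(\M)$ forces the matrix amplification to be trivial (Lemma \ref{comultiply}~(c)). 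These misstatements could be repaired, but the missing first step is essential and the proposal does not constitute a proof.
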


If $B$ is an ICC group and $B\curvearrowright I$ is an action with infinite orbits, then any group $G$ belonging to $\WR(A,B\curvearrowright I)$, for some group $A$, is ICC (see Lemma \ref{infinite} (b)). This implies that any $G$ as in Theorem \ref{superr} is ICC.

In Section \ref{Sec:WR}, we observe that examples of wreath-like products $G\in\WR (A,B\curvearrowright I)$ satisfying the assumptions of Theorem \ref{superr} naturally occur in the context of group theoretic Dehn filling. Here we mention just one particular case. For an element $h$ of a group $H$, we denote by $\ll h\rr$ the minimal normal subgroup of $H$ containing $h$.

\begin{thm}\label{Thm:HypWRintr}
Let $H$ be a torsion-free hyperbolic group and let $h\in H$ be a non-trivial element. For any sufficiently large $k\in\mathbb N$, the group $H/\ll h^k\rr $ is  hyperbolic, ICC, and we have $$H/[\ll h^k\rr, \ll h^k\rr]\in \WR (\ZZ, H/\ll h^k\rr\curvearrowright I),$$ where the action $H/\ll h^k\rr\curvearrowright I$ is transitive with finite cyclic stabilizers.
\end{thm}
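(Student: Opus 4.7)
The plan is to combine the group-theoretic Dehn filling theorem for hyperbolic groups with a Cohen--Lyndon type description of the filling kernel. Let $E(h)$ denote the unique maximal elementary subgroup of $H$ containing $h$; since $H$ is torsion-free hyperbolic, $E(h)$ is infinite cyclic and hyperbolically embedded in $H$. After replacing $h$ by a generator of $E(h)$ (which only changes $k$ by a bounded multiplicative factor), we may assume $E(h)=\langle h\rangle$.

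First, we invoke Osin's Dehn filling theorem in the hyperbolic setting: for all sufficiently large $k\in\NN$, the quotient $\bar H:=H/\ll h^k\rr$ is a non-elementary hyperbolic group with trivial finite radical, the natural map $E(h)/\langle h^k\rangle\to \bar H$ is injective, and its image $\bar E\subset \bar H$ is cyclic of order $k$. Being non-elementary hyperbolic with trivial finite radical, $\bar H$ is ICC, and the coset space $\bar H/\bar E$ is canonically identified with $H/E(h)$.

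Next, we invoke the Cohen--Lyndon type theorem for hyperbolically embedded subgroups (Sun; cf.\ Dahmani--Guirardel--Osin): for all sufficiently large $k$, there exists a left transversal $T\subset H$ of $E(h)$ in $H$ such that
\begin{equation*}
N:=\ll h^k\rr \;=\; \Ast_{t\in T} t\langle h^k\rangle t^{-1}.
\end{equation*}
Abelianizing yields $\bar N:=N/[N,N]=\bigoplus_{i\in I}A_i$ with $A_i\cong\ZZ$ indexed by $I:=H/E(h)\cong \bar H/\bar E$. Setting $W:=H/[N,N]$, the resulting short exact sequence
\begin{equation*}
1\longrightarrow \bar N\longrightarrow W\stackrel{\e}\longrightarrow \bar H\longrightarrow 1
\end{equation*}
has the property that conjugation by $w\in W$ permutes the summands by $wA_iw^{-1}=A_{\e(w)\cdot i}$, so $W\in\WR(\ZZ,\bar H\curvearrowright I)$ with transitive action and stabilizer $\bar E\cong\ZZ/k\ZZ$, as claimed.

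The principal obstacle is the Cohen--Lyndon step: one must produce the free product decomposition of $N$ with a transversal that is compatible with the coset action of $\bar H$ on $I$. This is achieved via the very rotating family machinery of Dahmani--Guirardel--Osin applied to the cone-off of a Cayley graph of $H$ along cosets of $E(h)$; for large $k$, $h^k$ generates a very rotating family, and the corresponding rotation subgroup is freely generated as a free product by conjugates of $\langle h^k\rangle$ indexed by a transversal of $E(h)$. All other ingredients --- hyperbolicity of $\bar H$, triviality of its finite radical, and the identification of $\bar E$ with $\ZZ/k\ZZ$ --- are standard consequences of Osin's Dehn filling theorem, and ICC of $\bar H$ is immediate from non-elementarity plus vanishing of the finite radical.
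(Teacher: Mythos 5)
Your overall route (Dehn filling for the hyperbolically embedded subgroup $E(h)$, plus a Cohen--Lyndon type description of $\ll h^k\rr$, then abelianizing) is exactly the paper's strategy, and the wreath-like structure part of your argument is essentially Sun's theorem, which the paper invokes in the form of the module isomorphism $\ll N\rr^{ab}\cong \mathrm{Ind}_{E(h)/N}^{\bar H}\,N^{ab}$. However, there is a genuine gap at the ICC claim. You assert that ``trivial finite radical'' of $\bar H=H/\ll h^k\rr$ is part of the standard output of the Dehn filling theorem, and that ICC is then immediate. The second implication is indeed the DGO criterion ($\bar H$ is ICC iff $K(\bar H)=\{1\}$), but the first assertion is precisely the point that requires a separate argument: the quotient $\bar H$ is \emph{not} torsion-free (it contains $\bar E\cong E(h)/\langle h^k\rangle$, a finite cyclic group), all of its torsion is conjugate into $\bar E$, and the surviving image $\bar E$ is finite, so its hyperbolic embeddedness or almost malnormality in $\bar H$ gives no control whatsoever over a potential finite normal subgroup. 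Neither Olshanskii's theorem nor the Dahmani--Guirardel--Osin filling theorem applied to $E(h)$ alone rules out $K(\bar H)\neq\{1\}$. The paper handles this by first producing (via Antolin--Minasyan--Sisto) an auxiliary loxodromic element $h'$ with $E(h')=\langle h'\rangle$ such that the \emph{collection} $\{E(h),E(h')\}$ is hyperbolically embedded, then applying the more general filling theorem [DGO, Theorem 7.19] to conclude that the infinite cyclic subgroup $E(h')$ survives and remains hyperbolically embedded, hence malnormal, in $\bar H$; a malnormal torsion-free infinite cyclic subgroup forces $K(\bar H)=\{1\}$. Some argument of this kind (an infinite-order element of $\bar H$ whose elementary closure is torsion-free, or an auxiliary hyperbolically embedded subgroup surviving the filling) is missing from your proposal and cannot be waved through as ``standard.''

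A secondary, fixable error: your Cohen--Lyndon decomposition is indexed incorrectly. The free factors of $N=\ll h^k\rr$ are conjugates $t\langle h^k\rangle t^{-1}$ with $t$ running over a left transversal of $E(h)\ll h^k\rr$ in $H$, not of $E(h)$; correspondingly $I\cong H/E(h)\ll h^k\rr\cong \bar H/\bar E$, and your claimed identification $H/E(h)\cong \bar H/\bar E$ is false (conjugates of $\langle h^k\rangle$ by elements in the same $E(h)N$-coset but different $E(h)$-cosets are identified, up to conjugacy in $N$, in the free product). With the index set taken to be $\bar H/\bar E$ the abelianization argument and the verification of $wA_iw^{-1}=A_{\e(w)\cdot i}$ go through as you describe, and this matches the paper's use of Sun's theorem; also note the paper avoids your preliminary replacement of $h$ by a generator of $E(h)$ by allowing $\langle h^k\rangle\lhd E(h)$ directly (automatic here since $E(h)$ is cyclic), which yields finite cyclic stabilizers $E(h)/\langle h^k\rangle$ without adjusting $k$.
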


In fact, we prove a more general result that also holds for hyperbolic groups with torsion (see Theorem \ref{Thm:HypWR}).  Combining Theorem \ref{Thm:HypWRintr} with Theorem \ref{superr}, we obtain many examples of non-trivial, W$^*$-superrigid, ICC groups with property (T).

\begin{cor}\label{MainCor} Let $H$ be a torsion-free hyperbolic group with property (T) and let $h\in H\setminus \{ 1\}$. For any sufficiently large $k\in\mathbb N$, the group $H/[\ll h^k\rr, \ll h^k\rr]$ is \emph{ICC} and \emph{W}$^*$-superrigid.
\end{cor}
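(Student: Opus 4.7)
The plan is a direct application of Theorem \ref{superr} to $G := H/[\ll h^k\rr, \ll h^k\rr]$, using Theorem \ref{Thm:HypWRintr} to exhibit $G$ as a wreath-like product. Concretely, for $k\in\NN$ sufficiently large so that Theorem \ref{Thm:HypWRintr} applies, I set $B := H/\ll h^k\rr$; then $G \in \WR(\ZZ, B\curvearrowright I)$ for a transitive action $B\curvearrowright I$ with finite cyclic stabilizers, and $B$ is itself hyperbolic and ICC.

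Next I would verify the hypotheses of Theorem \ref{superr}. The group $A = \ZZ$ is nontrivial abelian; $B$ is a nontrivial ICC subgroup of a hyperbolic group (namely of itself); and each stabilizer $\text{Stab}_B(i)$, being finite cyclic, is amenable. For property (T) of $G$, I would observe that $G$ is a quotient of $H$ and that property (T) is inherited by quotients, so $G$ has (T) as a consequence of the hypothesis on $H$. With these inputs Theorem \ref{superr} yields the full W$^*$-superrigidity statement, including the explicit form of the $*$-isomorphism.

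For the ICC conclusion I would invoke the remark directly following Theorem \ref{superr}: any $G\in \WR(A,B\curvearrowright I)$ with $B$ ICC and with the action having infinite orbits is automatically ICC, via Lemma \ref{infinite} (b). Here the unique orbit is $I$ itself, with $|I|=[B:\text{Stab}_B(i)]$ infinite because $B$ is ICC (hence infinite) while $\text{Stab}_B(i)$ is finite. The main obstacle at the level of this corollary is essentially non-existent, since all the genuine work has been absorbed into Theorems \ref{superr} and \ref{Thm:HypWRintr}; the only point worth double-checking is the alignment of the amenable-stabilizer and infinite-orbit conditions required by Theorem \ref{superr} with the output of Theorem \ref{Thm:HypWRintr}, and both are confirmed by the cardinality and amenability observations above.
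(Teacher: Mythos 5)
Your proposal is correct and is essentially the paper's own argument: the paper obtains the corollary exactly by combining Theorem \ref{Thm:HypWRintr} (giving $G=H/[\ll h^k\rr,\ll h^k\rr]\in\WR(\ZZ, H/\ll h^k\rr\curvearrowright I)$ with $H/\ll h^k\rr$ hyperbolic, ICC, and finite cyclic stabilizers) with Theorem \ref{superr}, noting that $G$ has property (T) as a quotient of $H$ and is ICC by the remark based on Lemma \ref{infinite} (b). No substantive difference from the paper's route.
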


Note that the group $H/[\ll h^k\rr, \ll h^k\rr]$ also has property (T) being a quotient of $H$. Torsion-free hyperbolic group with property (T) are abound; in fact, every property (T) group is a quotient of a torsion-free, hyperbolic, property (T) group  by a result of Cornulier \cite{Cor05}. Moreover, such groups are generic in the Gromov randomness model at density $1/3<d<1/2$ \cite{KK,Oli}. Finally, we mention some concrete linear examples.

\begin{ex}\label{Ex:RFHT}
Let $L$ be a uniform lattice in $Sp(n,1)$. By Selberg's lemma, there exists a finite index  torsion-free subgroup $H\le L$. Being a finite index subgroup of $L$, $H$ is also a uniform lattice in $Sp(n,1)$ and, therefore, is hyperbolic and has property (T).
\end{ex}

Wreath-like products obtained via Theorem \ref{Thm:HypWRintr} are not, in general, regular.  However, a little modification discussed in Section \ref{Sec:RWLP} allows us to construct $2^{\aleph_0}$ regular wreath-like products satisfying the assumptions of Theorem \ref{superr}. Thus, we obtain the following.

\begin{cor}\label{Cor:unc}
There exist $2^{\aleph_0}$ pairwise non-isomorphic, property (T), ICC groups that are \emph{W}$^\ast$-superrigid.
\end{cor}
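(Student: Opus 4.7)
The plan is to deduce the corollary directly from Theorem \ref{superr}, once one has a family of $2^{\aleph_0}$ pairwise non-isomorphic groups satisfying that theorem's hypotheses. The paragraph preceding the corollary indicates that Section \ref{Sec:RWLP} will supply such a family of \emph{regular} wreath-like products, and regularity is the crucial feature: when the action $B \curvearrowright I$ is free and transitive every point stabilizer is trivial, so the amenability hypothesis of Theorem \ref{superr} is automatic and that theorem applies verbatim.

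To produce the family, I would fix a torsion-free hyperbolic group $H$ with property (T) (e.g., a torsion-free uniform lattice in $\mathrm{Sp}(n,1)$ from Example \ref{Ex:RFHT}) and a non-trivial $h \in H$. Starting from the quotient $H/[\ll h^k\rr,\ll h^k\rr] \in \WR(\mathbb Z, H/\ll h^k\rr)$ given by Theorem \ref{Thm:HypWRintr}, a modification producing regular wreath-like products (by replacing $\ll h^k\rr$ by a suitable normal subgroup whose associated action on $I$ is free and transitive) gives a single example. Parametrizing this construction by subsets $S \subseteq \mathbb N$, via iterated Dehn filling that imposes or omits a carefully chosen relation at the $n$-th stage according as $n \in S$, yields a family $\{G_S\}_{S \subseteq \mathbb N}$. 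Each $G_S$ is a quotient of $H$, hence has property (T); each sits in $\WR(\mathbb Z, B_S)$ with $B_S$ a non-trivial ICC hyperbolic group (arranged via the Dehn filling theorem for hyperbolic groups); and each is ICC by Lemma \ref{infinite}(b). Theorem \ref{superr} then gives W$^*$-superrigidity of every $G_S$.

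The main obstacle is showing that $\{G_S\}_{S \subseteq \mathbb N}$ contains $2^{\aleph_0}$ pairwise non-isomorphic groups. Since the $G_S$ are all countable quotients of the finitely generated group $H$, $2^{\aleph_0}$ is the right upper bound, and the task is to realize it. This requires arranging the parametrized Dehn filling so that $S$ can be recovered from an isomorphism invariant of $G_S$ — for instance, the sequence of orders of torsion elements, or the collection of finite subgroups that embed into $G_S$ — as in known constructions of uncountable families of non-isomorphic quotients of hyperbolic groups (in the spirit of Olshanskii's and Osin's parametrized small cancellation/Dehn filling techniques). Once such a family is in place, the W$^*$-superrigidity conclusion is an immediate consequence of Theorem \ref{superr}.
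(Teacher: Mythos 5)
Your reduction to Theorem \ref{superr} is fine, but the heart of the corollary --- actually producing $2^{\aleph_0}$ pairwise non-isomorphic groups satisfying its hypotheses --- is left unproved. You correctly identify this as ``the main obstacle'' and then defer it to an unspecified ``iterated Dehn filling that imposes or omits a carefully chosen relation at the $n$-th stage,'' with non-isomorphism to be read off from ``the sequence of orders of torsion elements\dots as in known constructions.'' That is a plan, not an argument, and it faces concrete difficulties: the groups you want in the end are of the form $G/[\ll g^k\rr,\ll g^k\rr]$ (or subgroups thereof), which contain an infinite normal abelian subgroup and hence are not hyperbolic or acylindrically hyperbolic, so the parametrized small-cancellation/Dehn-filling machinery cannot be iterated on $G_S$ itself; all fillings would have to be performed at the level of the hyperbolic quotients, and you would then still have to show that the resulting wreath-like products (not just their quotients $B_S$) are pairwise non-isomorphic, while simultaneously preserving property (T), ICC-ness, the wreath-like structure over an ICC subgroup of a hyperbolic group, and amenable stabilizers. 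Also, your intermediate claim that the ``regularizing modification'' keeps the base equal to $\mathbb Z$ (i.e.\ $G_S\in\WR(\mathbb Z,B_S)$) is off: in the paper's Lemma \ref{WRsubgr}/Corollary \ref{CorAwrG0} one passes to the preimage of a subgroup acting freely, and the base becomes a direct sum of copies of $\mathbb Z$ indexed by the orbits, i.e.\ $\mathbb Z^\infty$ when the index is infinite. (Regularity is in any case not needed for Theorem \ref{superr} here --- the Dehn filling examples already have finite, hence amenable, stabilizers; its real role in the paper is different, see below.)

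The paper's proof avoids all of this by varying the base rather than the Dehn filling parameters. Proposition \ref{Prop:ZwrB} produces a single property (T) group $V\in\WR(\mathbb Z^\infty,B)$ with $B$ a non-trivial ICC subgroup of a hyperbolic group (via the Belegradek--Osin sequence $1\to N\to G\to S\times\mathbb Z\to 1$, one Dehn filling, and passage to the infinite-index subgroup $G_0$, which is exactly what forces the base to be $\mathbb Z^\infty$). Then, for each infinite set of primes $\mathcal P$, Lemma \ref{A/N} applied to $p_1\mathbb Z\oplus p_2\mathbb Z\oplus\cdots\lhd\mathbb Z^\infty$ gives $V_{\mathcal P}\in\WR\bigl(\bigoplus_{p\in\mathcal P}\mathbb Z/p\mathbb Z,\,B\bigr)$, still property (T) as a quotient of $V$ and satisfying the hypotheses of Theorem \ref{superr}. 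Non-isomorphism is then elementary: since $B$ is hyperbolic and ICC it has no non-trivial normal abelian subgroups, so the unique maximal abelian normal subgroup of $V_{\mathcal P}$ is the base, and the set of primes occurring as orders of its elements recovers $\mathcal P$. This is the step your torsion-based invariant was meant to supply, and it is precisely what your proposal does not establish; to repair it along the paper's lines you need the $\mathbb Z^\infty$ base (this, not the triviality of stabilizers, is what the regular construction buys) together with Lemma \ref{A/N} and the maximal-abelian-normal-subgroup argument.
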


The proof of Theorem \ref{superr} is given in Section \ref{SUPER} and  relies on a series of techniques and ideas from deformation/rigidity theory (e.g., \cite{Po01b,Po03,Po04,Po05,Po06b,Io06,OP07,CI08,PV09,Io10,IPV10,PV12,CIK13}). We refer the reader to the beginning of Section \ref{BER} for an outline of the proof of Theorem \ref{superr}. This proof plays property (T)
against properties of wreath-like product groups similar to properties of ordinary wreath products.

Although  wreath product groups $A\wr B$ are known to have remarkably rigid von Neumann algebras (see, e.g., \cite{Po03,Po04,Po06b,Io06,Io10, IPV10,IM19}), no wreath product $A\wr B$, with $A$ nontrivial abelian and $B$ torsion free, is W$^*$-superrigid by \cite[Theorem 1.2]{IPV10}.
It is precisely the presence of property (T) that  allows us to prove a stronger rigidity statement for wreath-like products $G\in\WR(A,B\curvearrowright  I)$. Using property (T) for $G$, rather than just for the quotient group $B$, is one of the main novelties of this paper.

\paragraph{Acknowledgments.} The first author has been supported by the NSF Grants  FRG-DMS-1854194 and DMS-2154637, the second author has been supported by NSF Grants FRG-DMS-1854074 and DMS-2153805, the third author has been supported by the NSF grant DMS-1853989, and the fourth author has received funding from the European Research Council (ERC) under the European Union’s Horizon 2020 research and innovation programme (Grant agreement No. 850930). We would like to thank the anonymous referee for several comments which helped improve the exposition.


\section{Wreath-like products of groups}\label{Sec:WR}


The main goal of this section is to prove  Theorem \ref{Thm:HypWR}, which is a more general (and more precise) version of Theorem \ref{Thm:HypWRintr}, and Corollary \ref{Cor:unc}. We begin by reviewing the necessary background.

\subsection{Hyperbolic groups and their generalizations}

We begin by recalling the necessary definitions. A group $G$ is {\it hyperbolic} if it is generated by a finite set $X$ and its Cayley graph $Cay (G,X)$ is a hyperbolic metric space. This definition is independent of the choice of a particular finite generating set $X$. A hyperbolic group is called \emph{elementary} if it contains a cyclic subgroup of finite index.

An isometric action of a group $G$ on a metric space $S$ is {\it acylindrical} if for every $\e>0$ there exist $R,N>0$ such that for every two points $x,y\in S$ with $\d (x,y)\ge R$, there are at most $N$ elements $g\in G$ satisfying
$$
\d(x,gx)\le \e \;\;\; {\rm and}\;\;\; \d(y,gy) \le \e.
$$

Every group has an acylindrical action on a hyperbolic space, namely the trivial action on the point. For this reason, we want to avoid elementary actions in the definition below. Recall that an action of a group $G$ on a hyperbolic space $S$ is \emph{non-elementary} if the limit set of $G$ on the Gromov boundary $\partial S$ has infinitely many points; for acylindrical actions, this condition is equivalent to the requirement that $G$ is not virtually cyclic and the action has infinite orbits {\cite[Theorem 1.1]{Osi16}}.

\begin{defn}\label{ahdef}
A group $G$ is \emph{acylindrically hyperbolic} if admits a non-elementary acylindrical action on a hyperbolic space.
\end{defn}

Every proper action is acylindrical. Therefore, every non-elementary hyperbolic group $G$ is acylindrically hyperbolic as witnessed by the proper action of $G$ on $Cay(G,X)$ for any finite generating set $X$. The class of acylindrically hyperbolic groups also includes many non-hyperbolic examples: mapping class groups of closed surfaces of non-zero genus, $Out(F_n)$ for $n\ge 2$, groups of deficiency at least $2$, most $3$-manifold groups, and many other examples. For more details we refer to the survey \cite{Osi18}.

Every acylindrically hyperbolic group contains a unique maximal finite normal subgroup \cite[Theorem 2.24]{DGO}). We denote it by $K(G)$. We will need the following result from \cite{DGO}.

\begin{thm}[{\cite[Theorem 2.35]{DGO}}]\label{Thm:HypICC}
An acylindrically hyperbolic group $G$ is \emph{ICC} if and only if $K(G)=\{ 1\}$.
\end{thm}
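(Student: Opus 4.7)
The strategy I would follow is to establish the stronger statement that the FC-center
$$FC(G):=\{g\in G\mid |g^G|<\infty\}$$
of an acylindrically hyperbolic group $G$ coincides with $K(G)$. The theorem is then immediate, since a group is ICC exactly when its FC-center is trivial.

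The inclusion $K(G)\subseteq FC(G)$ is obvious: every element of a finite normal subgroup has only finitely many conjugates. For the reverse inclusion $FC(G)\subseteq K(G)$, I would argue by contradiction, assuming there exists $g\in FC(G)\setminus K(G)$. Since $FC(G)$ is characteristic and hence normal, the normal closure $\ll g\rr$ is contained in $FC(G)$ and is not contained in $K(G)$. I would now invoke the normal-subgroup dichotomy from \cite{DGO}: any normal subgroup $N\trianglelefteq G$ not contained in $K(G)$ must contain an element which is loxodromic with respect to a fixed non-elementary acylindrical action of $G$ on a hyperbolic space. Applied to $\ll g\rr$, this produces a loxodromic element $\ell\in FC(G)$.

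To derive a contradiction, I would use a second structural fact extracted from acylindricity: the maximal elementary subgroup $E(\ell)$ containing a loxodromic $\ell$ is virtually cyclic, and $C_G(\ell)\subseteq E(\ell)$ since commuting with $\ell$ forces preservation of the pair of fixed points of $\ell$ on the Gromov boundary. Because the chosen action is non-elementary, $G$ itself is not virtually cyclic, so $C_G(\ell)$ has infinite index in $G$. Consequently $|\ell^G|=[G:C_G(\ell)]=\infty$, contradicting $\ell\in FC(G)$. This proves $FC(G)\subseteq K(G)$ and finishes both directions.

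The main obstacle is really concentrated in the two structural results I am importing from the theory of acylindrically hyperbolic groups, namely the normal-subgroup alternative and the virtual cyclicity of $E(\ell)$. Granted these, the argument is a brief contradiction. The only delicate point is bookkeeping: one must fix a single non-elementary acylindrical action at the start so that both inputs refer to the same loxodromic/elementary structure, and one must check that $FC(G)$ is indeed normal — which is automatic from its characteristicity.
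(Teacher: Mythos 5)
The paper offers no proof of this statement to compare against: it is imported verbatim from \cite[Theorem 2.35]{DGO}, the same source from which the paper quotes Theorem \ref{Thm:Eg} and the existence of $K(G)$. Judged on its own, your argument is correct and is essentially the standard route one would take (and, in substance, a reconstruction of the DGO argument rather than an alternative to anything proved in this paper). Three small points are worth making explicit. First, your reduction to $FC(G)=K(G)$ is fine, but the step ``$N$ normal and not contained in $K(G)$ implies $N$ is infinite'' uses that $K(G)$ contains \emph{every} finite normal subgroup, i.e.\ the maximality in \cite[Theorem 2.24]{DGO}; say so. Second, the ``normal-subgroup dichotomy'' you import is genuinely available and can even be proved in two lines from acylindricity: if an infinite normal subgroup $N$ had bounded orbits on the chosen hyperbolic space $S$, then normality gives $N\cdot(gx_0)=g\cdot(Nx_0)$, so a single bound $D$ works at every point of the $G$-orbit of $x_0$; applying the acylindricity condition with $\varepsilon=D$ to two orbit points at distance at least $R$ bounds $|N|$, a contradiction, and then the classification of acylindrical actions \cite[Theorem 1.1]{Osi16} (no parabolics) forces $N$ to contain a loxodromic element. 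Third, the inclusion $C_G(\ell)\le E(\ell)$ and the virtual cyclicity of $E(\ell)$ are exactly Theorem \ref{Thm:Eg} together with the characterization of $E(\ell)$ as the setwise stabilizer of the endpoint pair of $\ell$, and since a non-elementary acylindrical action rules out $G$ being virtually cyclic, $[G:C_G(\ell)]=\infty$ as you claim. So the proposal stands, with the caveat that its two ``imported'' inputs carry essentially all of the content of the cited DGO theorem.
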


Let $H$ be a subgroup of a group $G$. We say that $X\subseteq G$ is a \emph{relative generating set} of $G$ (with respect to $H$) if $G=\langle X\cup H\rangle$. Associated to such a relative generating set is the Cayley graph $Cay (G, X\sqcup H)$, where the disjoint union means that for any element $a\in X\cap H$ and any vertex $g\in G$, there are two edges in $Cay (G, X\sqcup H)$ going from $g$ to $ga$: one is labeled by a copy of $a$ from $X$ and the other is labeled by a copy of $a$ from $H$. We denote by  $Cay(H,H)$ the Cayley graph of $H$ with respect to the generating set $H$ and naturally think of it as a (complete) subgraph of $\G$.

\begin{defn}\label{Def:HE}
A subgroup $H$ is {\it hyperbolically embedded} in a group $G$ if there exists a relative generating set $X$ of $G$ such that $Cay(G,X\sqcup H)$ is hyperbolic and for any $n\in \NN$, there are only finitely many elements $h\in H$ such that $h$ can be connected to $1$ in $\G$ by a path of length at most $n$ avoiding edges of $Cay(H,H)$.
\end{defn}

For example, it is easy to see that $H$ is hyperbolically embedded in the free product $H\ast \ZZ$, but not in the direct product $G=H\times \ZZ$.  For details, we refer to \cite{DGO}.

Recall that an element $g$ of a group $G$ acting on a hyperbolic space $S$ is \emph{loxodromic} if $g$ acts as a translation along a bi-infinite quasi-geodesic in $S$. If the action of $G$ on $S$ is acylindrical, this is equivalent to the requirement that $\langle g\rangle$ has unbounded orbits (see \cite[Theorem 1.1]{Osi16}). For example, if $G$ is a hyperbolic group acting on its Cayley graph with respect to a finite generating set, every infinite order element $g\in G$ is loxodromic. We will need the following.

\begin{thm}[{\cite[Theorem 6.8]{DGO}}]\label{Thm:Eg}
Let $G$ be an acylindrically hyperbolic group.
Every loxodromic element $g\in G$ is contained in a unique maximal virtually cyclic subgroup $E(g)$ such that $E(g)\h G$.
\end{thm}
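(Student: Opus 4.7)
The statement is a well-known theorem of Dahmani–Guirardel–Osin, and my plan is to reconstruct the standard argument in four stages.

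First, I would define the candidate subgroup explicitly. By Definition~\ref{ahdef}, $G$ admits a non-elementary acylindrical action on some hyperbolic space $S$, and a loxodromic element $g\in G$ has exactly two fixed points $g^{\pm\infty}\in\partial S$. Set
$$
E(g) = \{h\in G : h\cdot\{g^{-\infty}, g^{+\infty}\} = \{g^{-\infty}, g^{+\infty}\}\},
$$
or equivalently the set of $h\in G$ such that $hg^nh^{-1}=g^{\pm n}$ for some $n\geq 1$. It is immediate that $E(g)$ is a subgroup containing $\langle g\rangle$, and that any virtually cyclic subgroup containing $g$ must fix the pair $\{g^{\pm\infty}\}$ and hence be contained in $E(g)$. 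This gives the uniqueness and maximality clauses at once.

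Second, I would prove that $E(g)$ is virtually cyclic. The key input is acylindricity: the elementwise stabilizer of any two points far apart on the quasi-axis of $g$ is finite with a uniform bound depending only on the separation. Taking the index-two subgroup $E^+(g)\subseteq E(g)$ fixing $g^{+\infty}$ and $g^{-\infty}$ pointwise, a standard argument shows that the map $E^+(g)\to\mathbb Z$ recording translation length along the axis has finite kernel (by acylindricity applied to two points deep on the axis) and cyclic image containing $\langle g\rangle$ with finite index. Hence $E^+(g)$, and therefore $E(g)$, is virtually cyclic.

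Third, and this is the main obstacle, I would establish the hyperbolic embedding $E(g)\hookrightarrow_h G$ in the sense of Definition~\ref{Def:HE}. The strategy, following DGO, is to produce a relative generating set $X$ for $G$ with respect to $E(g)$ such that $Cay(G, X\sqcup E(g))$ is hyperbolic and satisfies the ``linear separation'' finiteness condition. The natural choice is $X=G\setminus E(g)$ (or a symmetric relative generating set obtained by coning off $E(g)$ in a suitable Cayley/Gromov graph of $G$). The heart of the argument is to show that geodesics in the coned-off space which avoid $E(g)$-edges still correspond to quasi-geodesics in $S$ near the axis of $g$, so that the acylindricity of the $G$-action on $S$ translates into the required finiteness property on paths. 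Making this precise typically uses a projection/geometric separation lemma stating that large powers of $g$ cannot be ``shortcut'' outside of $E(g)$ — a quantitative form of the WPD condition, which here is a free consequence of acylindricity together with loxodromicity.

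Once these three ingredients are in place, the theorem follows by combining uniqueness/maximality (step 1), virtual cyclicity (step 2), and hyperbolic embedding (step 3). The conceptual content lies entirely in step 3; steps 1 and 2 are essentially formal consequences of the definition of loxodromic and of acylindricity applied to the axis.
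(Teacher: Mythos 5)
First, note that the paper does not prove this statement at all: it is imported verbatim as a citation to \cite[Theorem 6.8]{DGO}, so there is no internal argument to compare against; what you have written is an attempted reconstruction of the DGO proof. Your steps 1 and 2 are essentially correct sketches of the standard argument (modulo the usual caveat that the ``translation length'' map on $E^+(g)$ is only a quasimorphism, not a homomorphism to $\mathbb Z$, so the finite-index statement has to be extracted via acylindricity applied to $hg^{-n(h)}$ for a coarsely defined $n(h)$, rather than via a literal kernel/image argument).

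The genuine gap is in step 3, which you yourself identify as the heart of the matter but do not actually carry out, and the one concrete suggestion you make there is wrong. Taking the relative generating set $X=G\setminus E(g)$ can never witness $E(g)\hookrightarrow_h G$: whenever $E(g)$ is infinite and has index at least $3$, every $h\in E(g)$ can be written as $h=a\cdot(a^{-1}h)$ with $a\notin E(g)\cup hE(g)$, so $h$ is joined to $1$ by a path of length $2$ in $Cay(G,X\sqcup E(g))$ avoiding all $E(g)$-edges, and the finiteness condition of Definition \ref{Def:HE} fails outright (this is the same reason no infinite subgroup is hyperbolically embedded with respect to $X=G$). The actual proof in \cite{DGO} chooses $X$ quite differently --- roughly, the set of elements moving a basepoint on a quasi-axis of $g$ a bounded distance --- and then verifies, via their criterion for hyperbolic embeddings (quasiconvexity plus geometric separation of the $E(g)$-orbit, with WPD/acylindricity supplying the separation), that this $X$ works. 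Your remaining sentences gesture at this (``geodesics avoiding $E(g)$-edges correspond to quasi-geodesics near the axis'') but supply neither the correct $X$, nor the quasiconvexity/geometric separation statements, nor the verification of the two conditions in Definition \ref{Def:HE}; as it stands the central assertion of the theorem, $E(g)\hookrightarrow_h G$, is not proved.
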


\subsection{Wreath-like products associated to group theoretic Dehn filling}

Let $F$ be a free group and let $R=\ll r^k\rr$ for some $k\in \NN$ and $r\in F$, where $r$ is not a proper power. The Cohen-Lyndon theorem on relation modules of $1$-relator groups proved in \cite{CL} implies that $F/[R,R]\in \WR(\ZZ, F/R\curvearrowright I)$, where the action $F/R\curvearrowright I$ is transitive with stabilizers isomorphic to $\ZZ/k\ZZ$.

In this section, we show that this example has a natural analogue in the context of group theoretic Dehn filling. We begin by briefly surveying the relevant background. For more details, the reader is referred to \cite{DGO,Osi07,Sun}.

The classical Dehn surgery on a $3$-dimensional manifold consists of cutting off a solid torus, which may be thought of as ``drilling" along an embedded knot, and then gluing it back in a different way. The study of such transformations is partially motivated by the  Lickorish-Wallace theorem, which states that every closed orientable connected $3$-manifold can be obtained from the $3$-dimensional sphere by performing finitely many surgeries. The second part of the surgery, called {\it Dehn filling}, can be formalized as follows.

Let $M$ be a compact orientable $3$-manifold with toric boundary. Topologically distinct ways of attaching a solid torus to $\partial M$ are parameterized by free homotopy classes of unoriented essential simple closed curves in $\partial M$, called {\it slopes}. For a slope $s$, the corresponding   Dehn filling  $M(s)$ of $M$ is the manifold obtained from $M$ by attaching a solid torus to $\partial M$ so that the meridian of the torus goes to a simple closed curve of the slope $s$. The fundamental theorem due to Thurston \cite[Theorem 1.6]{Th} asserts that if $M\setminus\partial M$ admits a finite volume hyperbolic structure, then $M(s)$ is hyperbolic for all but finitely many slopes. Note that, in the settings of Thurston's theorem, we can think of $s$ as an element of $\pi_1(\partial M)\le \pi_1(M)$ and, by the Seifert--van Kampen theorem, we have
\begin{equation}\label{Eq:pi1}
\pi_1(M(s))=\pi_1(M)/\ll s\rr.
\end{equation}

In group-theoretic settings, the role of the pair $\partial M\subset M$ is played by a pair of groups $H\le G$ and the existence of a finite volume hyperbolic structure on $M\setminus\partial M$ translates to the property that $H$ is hyperbolically embedded in $G$. Equation (\ref{Eq:pi1}) suggests that the process of attaching a solid torus to $M$ must correspond to taking the quotient of $G$ modulo the normal closure of an element $s\in H$. In fact, we can consider even more general quotients.

For a group $G$ and a subset $S\subseteq G$, we denote by $\ll S\rr$ the \emph{normal closure} of $S$ in $G$; that is $\ll S\rr$ is the smallest normal subgroup of $G$ containing $S$. The following result can be thought of as the algebraic analogue of Thurston's theorem.

\begin{thm}[Dahmani--Guirardel--Osin] \label{Thm:DF}
Let $G$ be a group, $H$ a hyperbolically embedded subgroup of $G$. There exists a finite subset $\mathcal F\subseteq H\setminus\{ 1\}$ such that for any $N\lhd H$ satisfying $N\cap \mathcal F=\emptyset$, the natural map $H/N\to G/\ll N\rr$ is injective and $H/N\h G/\ll N\rr$ (by abuse of notation, we identify $H/N$ with its image in $G/\ll N\rr$).
\end{thm}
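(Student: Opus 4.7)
The plan is to exploit the hyperbolic embedding of $H$ via a group-theoretic analogue of classical small cancellation theory. Since $H\h G$, fix a relative generating set $X$ so that $\Gamma = \mathrm{Cay}(G, X\sqcup H)$ is hyperbolic, and let $\hat d_H$ be the relative metric on $H$ induced by paths in $\Gamma$ avoiding edges of $\mathrm{Cay}(H,H)$; by Definition \ref{Def:HE}, every ball in $(H,\hat d_H)$ is finite. The idea is to declare $\mathcal F$ to be the (finite) set of all non-trivial elements $h\in H$ with $\hat d_H(1,h)\le R_0$, where $R_0$ is a threshold depending only on the hyperbolicity constant of $\Gamma$ and the acylindricity parameters of the action $G\curvearrowright \Gamma$ coming from hyperbolic embedding. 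The condition $N\cap \mathcal F=\emptyset$ then guarantees that every non-trivial element of $N$ (and of every conjugate $gNg^{-1}$ viewed inside the corresponding copy of $H$) has large relative length.

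Next I would set up the \emph{cone-off} Cayley graph $\dot\Gamma$: form $\Gamma$ and, for each coset $gH$, glue in a new apex vertex $v_{gH}$ connected by an edge of length $1/2$ to every vertex of $gH$. The translate $g v_H$ is an apex with large stabilizer $gHg^{-1}$, and the family $\{gNg^{-1}\}_{gH\in G/H}$ of subgroups acting by rotations at these apices is the candidate for a \emph{very rotating family} in the sense of \cite{DGO}. The largeness condition $N\cap \mathcal F=\emptyset$ is exactly what is needed to ensure that any non-trivial rotation moves points far enough in the cone-off to satisfy the Greendlinger-type geometric condition. Applying the rotating families machinery then simultaneously yields: (a) $\ll N\rr$ is the free product $\Ast_{g\in T} gNg^{-1}$ over a coset transversal $T\subseteq G/H$; (b) the quotient space $\dot\Gamma/\ll N\rr$ is hyperbolic; and (c) the action of $G/\ll N\rr$ on it is acylindrical with finitely many orbits of apices.

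From (a), the kernel of $H\to G/\ll N\rr$ is exactly $\ll N\rr\cap H = N$, giving injectivity of $H/N\hookrightarrow G/\ll N\rr$. For the hyperbolic embedding $H/N\h G/\ll N\rr$, I would lift the relative generating set $X$ to a generating set $\overline X$ of the quotient and verify that $\mathrm{Cay}(G/\ll N\rr,\, \overline X\sqcup H/N)$ is quasi-isometric to the quotient cone-off $\dot\Gamma/\ll N\rr$; hyperbolicity then passes to the Cayley graph, and the properness condition on $\hat d_{H/N}$ is inherited from $\hat d_H$ via a Greendlinger-type lemma (an $\ll N\rr$-translate of a short path avoiding $\mathrm{Cay}(H/N,H/N)$ lifts to a short path in $\Gamma$ avoiding $\mathrm{Cay}(H,H)$ modulo a controlled number of rotations, using (a)).

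The main obstacle is the calibration of $R_0$: one must choose it large enough that every non-trivial rotation by $gNg^{-1}$ in $\dot\Gamma$ satisfies the strong ``very rotating'' hypothesis (which enforces that any geodesic triangle with two sides emanating from a rotated apex closes up by a controlled amount), while simultaneously ensuring acylindricity of the quotient action. Concretely, this requires quantitative input balancing the hyperbolicity constant $\delta$ of $\Gamma$, the acylindricity constants associated with $H\h G$, and the Cartan--Hadamard-type criterion used to run the rotating family argument on a non-simply-connected space. Once the correct $R_0$ is produced (finiteness of the corresponding ball, and hence of $\mathcal F$, follows from the defining property of hyperbolic embedding), all subsequent verifications are geometric consequences of the rotating family theorem.
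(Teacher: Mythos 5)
The paper itself does not prove Theorem \ref{Thm:DF}: it is imported from the literature, namely \cite[Theorem 2.27]{DGO} (with the relatively hyperbolic case due to \cite{Osi07} and, independently, \cite{GM}), so there is no internal argument to compare yours against. What you outline is, at the level of strategy, exactly the proof in the cited source: take $\mathcal F$ to be a ball of non-trivial elements in the relative metric $\widehat{d}_H$ (finite by Definition \ref{Def:HE}), cone off the cosets $gH$ in $Cay(G,X\sqcup H)$, verify that for $N\lhd H$ avoiding $\mathcal F$ the conjugates $gNg^{-1}$ form a very rotating family on the coned-off space, and extract from the rotating-family machinery the decomposition $\ll N\rr=\Ast_{t\in T}\, tNt^{-1}$, hence $\ll N\rr\cap H=N$ and injectivity of $H/N\to G/\ll N\rr$, together with persistence of the hyperbolic embedding in the quotient.

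As a proof, however, your text defers essentially all of the substantive content to ``the rotating families machinery,'' which is precisely the body of \cite{DGO}: the choice of the threshold $R_0$, the hyperbolicity of the coned-off space (which in \cite{DGO} requires gluing hyperbolic cones of large radius, not apices at distance $1/2$ --- with the naive radius-$1/2$ cone the very rotating condition cannot hold), the Greendlinger-type lemma, and the transfer of properness of the relative metric to $H/N$ are exactly what needs to be proved, so the proposal is an accurate roadmap rather than an argument. Two further inaccuracies: the definition of a hyperbolically embedded subgroup supplies no acylindricity constants for the action $G\curvearrowright Cay(G,X\sqcup H)$ (that action need not be acylindrical), and the verification of the very rotating condition in \cite{DGO} rests on properness of $\widehat{d}_H$ and hyperbolicity of the cones rather than on acylindricity; similarly, acylindricity of the quotient action claimed in your item (c) is neither available at this level of generality nor needed for the statement. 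None of this affects the paper, which uses the theorem as a black box, but be aware that your write-up reproduces the statement's proof strategy rather than supplying an independent proof.
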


 For relatively hyperbolic groups, this theorem was obtained in \cite{Osi07} (an independent proof for torsion-free groups was given by Groves and Manning in \cite{GM}) and the general version was proved in \cite[Theorem 2.27]{DGO}.

We now state the main result of this section. Recall that, for a loxodromic element $g$ of an acylindrically hyperbolic group $G$, $E(g)$ denotes the maximal virtually cyclic subgroup of $G$ containing $g$.

\begin{thm}\label{Thm:HypWR}
Let $G$ be an acylindrically hyperbolic group, $g\in G$ a loxodromic element. Let $d$ be a natural number such that $\langle g^d\rangle \lhd E(g)$. For every sufficiently large $k\in \NN$ divisible by $d$, the following hold.

\begin{enumerate}
\item[(a)] We have
\begin{equation}\label{Eq:GWR}
G/[\ll g^{k}\rr, \ll g^{k}\rr] \in \WR (\ZZ, G/\ll g^k\rr \curvearrowright I),
\end{equation}
where $I$ is the set of cosets $G/E(g)\ll g^k\rr$ and the action is by left multiplication. In particular, the action $G/\ll g^k\rr \curvearrowright I$ is transitive.
\item[(b)] The stabilizers of the action $G/\ll g^k\rr \curvearrowright I$ are isomorphic to $E(g)/\langle g^k\rangle$.
\item[(c)] If $G$ is \emph{ICC}, then so is $G/\ll g^k\rr$.
\item[(d)] \emph{(Olshanskii, \cite{Ols})} If $G$ is hyperbolic, then so is $G/\ll g^k\rr$.
\end{enumerate}
\end{thm}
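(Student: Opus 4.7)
The plan is to combine the group theoretic Dehn filling theorem with a Cohen--Lyndon type free product decomposition of $\ll g^k\rr$, and then read off the wreath-like structure from the abelianisation.

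First I would apply Theorem \ref{Thm:Eg} to obtain $E(g) \h G$. Since $E(g)$ is virtually cyclic with $\langle g^d\rangle$ a normal cyclic subgroup of finite index, any multiple $k$ of $d$ gives $\langle g^k\rangle \leq \langle g^d\rangle$ and hence $\langle g^k\rangle \lhd E(g)$. I would then apply Theorem \ref{Thm:DF} with $H = E(g)$ and $N = \langle g^k\rangle$: it yields a finite exceptional set $\mathcal F \subseteq E(g)\setminus\{1\}$ such that the avoidance condition $\langle g^k\rangle \cap \mathcal F = \emptyset$ forces both $E(g)/\langle g^k\rangle \hookrightarrow G/\ll g^k\rr$ and $E(g)/\langle g^k\rangle \h G/\ll g^k\rr$. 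Since $g$ has infinite order (as it is loxodromic) and $\mathcal F$ is finite, the avoidance condition holds for all sufficiently large $k$ divisible by $d$. The injectivity forces $E(g)\cap\ll g^k\rr = \langle g^k\rangle$, which immediately gives (b): the left multiplication action $G/\ll g^k\rr \curvearrowright G/(E(g)\ll g^k\rr)$ is transitive by construction, and the stabilizer of the identity coset equals $E(g)\ll g^k\rr/\ll g^k\rr \cong E(g)/(E(g)\cap\ll g^k\rr) = E(g)/\langle g^k\rangle$.

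Next I would invoke the Cohen--Lyndon type theorem of Sun \cite{Sun} in the acylindrically hyperbolic setting. For $k$ as above (possibly enlarged), this provides a set $T \subseteq G$ of left coset representatives for $E(g)\ll g^k\rr$ in $G$, together with a free product decomposition
\[
\ll g^k\rr \;=\; \Ast_{t \in T}\, t\langle g^k\rangle t^{-1}.
\]
Abelianising a free product gives a direct sum, so, writing $K := \ll g^k\rr/[\ll g^k\rr,\ll g^k\rr]$ and letting $A_t$ be the image of $t\langle g^k\rangle t^{-1}$ in $K$, we have $K = \bigoplus_{t \in T} A_t$ with each $A_t \cong \ZZ$. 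Identifying $T$ with $I := G/(E(g)\ll g^k\rr)$ via $t \mapsto tE(g)\ll g^k\rr$ reindexes the summands and gives $K = \bigoplus_{i \in I} A_i$.

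To complete (a), I would verify the conjugation rule for the extension
\[
1 \longrightarrow K \longrightarrow G/[\ll g^k\rr,\ll g^k\rr] \stackrel{\epsilon}{\longrightarrow} G/\ll g^k\rr \longrightarrow 1.
\]
For $n,x \in \ll g^k\rr$ the identity $nxn^{-1} = [n,x]\, x$ shows that $nxn^{-1}$ and $x$ differ by an element of $[\ll g^k\rr,\ll g^k\rr]$, so conjugation by $\ll g^k\rr$ is trivial on $K$ and the $G$-conjugation action on $K$ descends to $B := G/\ll g^k\rr$. Given $b \in B$ with lift $\tilde b \in G$ and $i = tE(g)\ll g^k\rr \in I$, write $\tilde b t = t' e n$ with $t' \in T$, $e \in E(g)$, $n \in \ll g^k\rr$. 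Then
\[
\tilde b\,(tg^k t^{-1})\,\tilde b^{-1} = t'(e n g^k n^{-1} e^{-1})(t')^{-1},
\]
and using $\langle g^k\rangle \lhd E(g)$ (so $eg^ke^{-1} \in \langle g^k\rangle$) together with triviality of conjugation by $\ll g^k\rr$ on $K$, the image of this element in $K$ lies in $A_{t'} = A_{b\cdot i}$. This is exactly the rule $wA_iw^{-1} = A_{\epsilon(w)\cdot i}$ of Definition \ref{wlp}.

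For (c), standard Dehn filling technology from \cite{DGO} shows that $G/\ll g^k\rr$ remains acylindrically hyperbolic for large enough $k$ and that $K(G/\ll g^k\rr) = \{1\}$ whenever $K(G) = \{1\}$; Theorem \ref{Thm:HypICC} then upgrades this to ICC. Part (d) is Olshanskii's theorem \cite{Ols}, cited verbatim. The main obstacle in the argument is the Cohen--Lyndon decomposition of $\ll g^k\rr$: once this free product structure is in hand, the wreath-like identification, the stabilizer calculation, and the ICC preservation all reduce to essentially formal bookkeeping, but without it there is no way to exhibit $K$ as a direct sum canonically indexed by the coset space $G/(E(g)\ll g^k\rr)$.
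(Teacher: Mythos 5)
Your treatment of parts (a), (b) and (d) is essentially the paper's argument: apply Theorem \ref{Thm:Eg} to get $E(g)\h G$, choose $k$ divisible by $d$ and large enough that $\langle g^k\rangle$ avoids the finite exceptional sets of Theorem \ref{Thm:DF} and of Sun's theorem, and then read off the wreath-like structure and the stabilizers. The only cosmetic difference is that you use Sun's Cohen--Lyndon free product decomposition $\ll g^k\rr=\Ast_{t\in T}t\langle g^k\rangle t^{-1}$ and verify the conjugation rule of Definition \ref{wlp} by hand, whereas the paper quotes the induced-module corollary of the same result (Theorem \ref{Lem:SunComm}, via \cite[Corollary 2.8]{Sun}); your bookkeeping there is correct, including the identification of the stabilizer with $E(g)\ll g^k\rr/\ll g^k\rr\cong E(g)/\langle g^k\rangle$ via the injectivity part of Theorem \ref{Thm:DF}.

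Part (c), however, is a genuine gap. You assert that ``standard Dehn filling technology from \cite{DGO}'' shows both that $G/\ll g^k\rr$ is acylindrically hyperbolic and that $K(G/\ll g^k\rr)=\{1\}$ whenever $K(G)=\{1\}$, but neither statement is an off-the-shelf consequence of filling along the single subgroup $E(g)$. Note that $E(g)/\langle g^k\rangle$ is \emph{finite}, so its survival as a hyperbolically embedded subgroup of the quotient gives you neither acylindrical hyperbolicity of $G/\ll g^k\rr$ nor any control on its finite radical; and there is no general theorem saying that triviality of the maximal finite normal subgroup passes to Dehn fillings. This is exactly where the paper has to work: using \cite{AMS} one produces an auxiliary loxodromic element $h$ with $E(h)=\langle h\rangle$ such that the collection $\{E(g),E(h)\}$ is hyperbolically embedded in $G$; the version of Dehn filling for collections (\cite[Theorem 7.19]{DGO}) then guarantees that for large $k$ the infinite cyclic subgroup $E(h)$ embeds in $G/\ll g^k\rr$ and remains hyperbolically embedded there (so in particular the quotient is acylindrically hyperbolic), and since torsion-free hyperbolically embedded subgroups are malnormal (\cite[Proposition 2.10]{DGO}), any finite normal subgroup of $G/\ll g^k\rr$ must intersect, hence lie in, $E(h)$ and therefore be trivial; only then does Theorem \ref{Thm:HypICC} yield ICC. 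Without this auxiliary subgroup (or some substitute argument), your claim that ICC is preserved is unsupported.
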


Note that Theorem \ref{Thm:HypWRintr} is a particular case of Theorem \ref{Thm:HypWR}. Indeed, if $G$ is torsion-free, then so is $E(g)$. Every torsion-free virtually cyclic group is cyclic (see, for example, \cite[Lemma 2.5]{JMN}). Therefore, we can take $d=1$ and Theorem \ref{Thm:HypWRintr} follows.

For a group $R$, we denote by $R^{ab}$ its abelianization; that is, $R^{ab}=R/[R, R]$. The main ingredient of the proof of Theorem \ref{Thm:HypWR} is the following result, which was stated and proved in \cite{Sun} using a slightly different terminology.

\begin{thm}[Sun \cite{Sun}]\label{Lem:SunComm}
Let $G$ be a group, $H$ a hyperbolically embedded subgroup of $G$. There exists a finite subset $\mathcal F\subseteq H\setminus\{ 1\}$ such that for any $N\lhd H$ satisfying $N\cap \mathcal F=\emptyset$, we have
\begin{equation}\label{Eq:Sun}
G/[\ll N\rr, \ll N\rr] \in \WR (N^{ab}, G/\ll N\rr\curvearrowright I),
\end{equation}
where $I=G/H\ll N\rr$, the action of $G/\ll N\rr$ on $I$ is by left multiplication, and stabilizers of elements of $I$ are isomorphic to $H/N$.
\end{thm}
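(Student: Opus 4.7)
The plan is to reduce the theorem to a Cohen--Lyndon type structural result for $\ll N\rr$ as a subgroup of $G$, and then simply abelianize. Set $R=\ll N\rr$, $W = G/[R,R]$, and $B = G/R$. Since $[R,R]$ is characteristic in $R$ and $R$ is normal in $G$, the subgroup $[R,R]$ is normal in $G$, so $R^{ab}=R/[R,R]$ embeds as a normal subgroup of $W$ and we get the canonical short exact sequence
\begin{equation*}
1\longrightarrow R^{ab}\longrightarrow W\stackrel{\e}{\longrightarrow} B\longrightarrow 1.
\end{equation*}
This already exhibits the extension (\ref{ext}) once we identify $R^{ab}$ with $\bigoplus_{i\in I}A_i$ and check the required equivariance for the conjugation action. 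So everything reduces to understanding the internal structure of $R=\ll N\rr$.

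The central input (and the main obstacle) is a Cohen--Lyndon type theorem for hyperbolically embedded subgroups: there exists a finite set $\mathcal F\subseteq H\setminus\{1\}$ and, for every $N\triangleleft H$ avoiding $\mathcal F$, a set of left coset representatives $T$ for $G/H\ll N\rr$ (with $1\in T$) such that
\begin{equation*}
\ll N\rr \;=\; \Ast_{t\in T}\, tNt^{-1}.
\end{equation*}
This is exactly the content proved by Sun by running the small-cancellation/projection machinery of Dahmani--Guirardel--Osin on a carefully chosen rotating family of subgroups $\{tNt^{-1}\}_{t\in T}$; I would quote it directly. Using Theorem \ref{Thm:DF} to guarantee that $H/N\h G/\ll N\rr$ (in particular $H\cap \ll N\rr = N$, so each factor $tNt^{-1}$ intersects the corresponding conjugate of $H$ exactly in $tNt^{-1}$), I would enlarge $\mathcal F$ if necessary so that the hypotheses of both Theorem \ref{Thm:DF} and the Cohen--Lyndon decomposition hold simultaneously.

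Granted the free product decomposition, abelianizing gives
\begin{equation*}
R^{ab}\;=\;\bigoplus_{t\in T}(tNt^{-1})^{ab}\;=\;\bigoplus_{i\in I}A_i,
\end{equation*}
where $I$ is identified with $T$, hence with $G/H\ll N\rr$, and each $A_i\cong N^{ab}$. For equivariance: given $w\in W$ lifting to $g\in G$, conjugation by $g$ sends $tNt^{-1}$ to $(gt)N(gt)^{-1}$, and the image of $gt$ in $G/H\ll N\rr$ is precisely $\e(w)\cdot(tH\ll N\rr)$, so $wA_iw^{-1}=A_{\e(w)\cdot i}$ as required. Finally, the stabilizer of the base point $i_0=H\ll N\rr\in I$ under left multiplication by $B=G/\ll N\rr$ is $H\ll N\rr/\ll N\rr \cong H/(H\cap \ll N\rr) = H/N$ by Theorem \ref{Thm:DF}, and transitivity is immediate since $G$ acts transitively on its cosets.

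The only non-routine step is the free product decomposition of $\ll N\rr$; once that is in hand, everything else is formal. I would therefore present the proof as: (i) invoke Theorem \ref{Thm:DF} to fix a first exceptional set $\mathcal F_1$; (ii) invoke Sun's Cohen--Lyndon decomposition to fix a second exceptional set $\mathcal F_2$; (iii) set $\mathcal F = \mathcal F_1\cup\mathcal F_2$ and assemble the wreath-like structure from the ingredients above. The main technical content of the theorem is packaged inside step (ii), and I would not attempt to reprove it here.
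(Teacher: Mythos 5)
Your argument is correct and is essentially the route the paper takes: the paper derives the statement from \cite[Corollary 2.8]{Sun}, which is precisely the module-theoretic corollary of the Cohen--Lyndon free-product decomposition $\ll N\rr=\Ast_{t\in T}tNt^{-1}$ that you invoke directly, namely $\ll N\rr^{ab}\cong{\rm Ind}_{H/N}^{G/\ll N\rr}N^{ab}$ as $G/\ll N\rr$-modules. You just cite one step earlier in Sun's paper and carry out the abelianization and equivariance bookkeeping by hand; the one small point worth spelling out in your equivariance check is that for $g\in G$ with $gt\in t'H\ll N\rr$, conjugation by the $H\ll N\rr$-part of $gt$ fixes the summand $A_{t'}$ in $\ll N\rr^{ab}$ because inner automorphisms act trivially on the abelianization, which is what makes $wA_iw^{-1}=A_{\e(w)\cdot i}$ well defined.
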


\begin{proof}[On the proof]
Since our terminology is different from the one used by Sun, we explain how to derive the theorem from the main result of \cite{Sun}. Consider the short exact sequence
$$
1\longrightarrow \ll N\rr^{ab} \longrightarrow  G/[\ll N\rr, \ll N\rr] \longrightarrow G/\ll N\rr \longrightarrow 1.
$$
By \cite[Corollary 2.8]{Sun}, we have an isomorphism of $G/\ll N\rr$-modules
\begin{equation}\label{Eq:SunIso}
\ll N\rr^{ab}\cong {\rm Ind}_{H/N}^{G/\ll N\rr} N^{ab},
\end{equation}
where the actions of $G/\ll N\rr $ on $\ll N\rr ^{ab}$ and $H/N$ on $N^{ab}$ are induced by conjugation. Note that we can assume $H/N$ to be a subgroup of $G/\ll N\rr $ by Theorem \ref{Thm:DF}. The standard description of the algebraic structure of the induced module (see, for example, \cite[Ch. III, Proposition 5.1]{Bro}) and (\ref{Eq:SunIso}) easily imply (\ref{Eq:Sun}).
\end{proof}

We are now ready to prove the main result of this section.

\begin{proof}[Proof of Theorem \ref{Thm:HypWR}]
By Theorem \ref{Thm:Eg}, we have $E(g)\h G$. Let $\mathcal F$ be a finite subset of $E(g)\setminus \{ 1\}$ such that the conclusions of Theorem \ref{Thm:DF} and Theorem \ref{Lem:SunComm} simultaneously hold true for $H=E(g)$ and any $N\lhd H$ satisfying $N\cap \mathcal F=\emptyset$.

Since $k$ is divisible by $d$, we have $\langle g^k\rangle \lhd E(g)$. Further, by taking $k$ sufficiently large, we can ensure the condition $\langle g^k\rangle \cap \mathcal F=\emptyset$. By Theorem \ref{Lem:SunComm}, we have (\ref{Eq:GWR}) where $I$ is the set of cosets $G/E(g)\ll g^k\rr$ and the action $G/\ll g^k\rr \curvearrowright I$ is by left multiplication. In particular, the action $G/\ll g^k\rr \curvearrowright I$ is transitive and the stabilizers are isomorphic to $E(g)\ll g^k\rr/\ll g^k\rr \cong E(g)/ \langle g^k\rangle $ by Theorem \ref{Thm:DF}. This gives parts (a) and (b) of the theorem.

The proof of (c) makes use of a more general Dehn filling procedure with multiple hyperbolically embedded subgroups and some other results about acylindrically hyperbolic groups. Since these results are not used anywhere else in our paper, we do not discuss them in detail. Instead, we refer the reader to the appropriate places in the relevant papers.

Assume that $G$ is ICC. By Theorem \ref{Thm:HypICC}, we have $K(G)=\{ 1\}$. Let $X$ be a relative generating set of $G$ with respect to $H=E(g)$ satisfying the conditions listed in Definition \ref{Def:HE}. Combining Proposition 5.14 and Corollary 3.12 from \cite{AMS}, we obtain an element $h\in G$ acting loxodromically on $Cay(G, X\sqcup H)$ such that $E(h)=\langle h\rangle $ and the collection of subgroups $\{ E(g), E(h)\}$ is hyperbolically embedded in $G$ (for the definition of a hyperbolically embedded collection of subgroups, see \cite[Definition 4.25]{DGO}).

By \cite[Theorem 7.19]{DGO}, which is a more general version of Theorem \ref{Thm:DF}, we can choose a finite subset $F\subseteq E(g)\setminus \{1\}$ so that for any $N\lhd E(g)$ satisfying $N\cap \mathcal F=\emptyset$, the natural maps $E(g)/N\to G/\ll N\rr$ and $E(h)\to G/\ll N\rr$ are injective and the collection $\{ E(g)/N, E(h)\} $ is hyperbolically embedded in $G/\ll N\rr$ (by abuse of notation, we identify $E(g)/N$ and $E(h)$ with their isomorphic images in $G/\ll N\rr$). In particular, this is true for  $N=\langle g^k\rangle $ for all sufficiently large $k$ divisible by $d$. By \cite[Proposition 2.10]{DGO}, torsion-free hyperbolically embedded subgroups are malnormal; therefore, the infinite cyclic subgroup $E(h)$ is malnormal in $G/\ll g^k\rr$. This easily implies that $K(G/\ll g^k\rr)=\{ 1\}$, which is equivalent to $G/\ll g^k\rr$ being ICC by  Theorem \ref{Thm:HypICC}.

Finally, part (d) was proved for all sufficiently large $k$ divisible by $d$ in \cite[Theorem 3]{Ols}.
\end{proof}


\subsection{Regular wreath-like products}\label{Sec:RWLP}


In this section, we use Theorem \ref{Thm:HypWR} to construct (uncountably many) regular wreath-like products satisfying the assumptions of Theorem \ref{superr}.  We begin with a lemma that allows us to obtain regular wreath-like products from non-regular ones. We restrict ourselves to transitive actions for notational simplicity; the generalization to arbitrary actions is straightforward.

\begin{lem}\label{WRsubgr}
Let $A$, $B$ be arbitrary groups, $B\curvearrowright I$ a transitive action of $B$ on a set $I$, and let $W\in \WR(A,B\curvearrowright I)$. Further, let $D\le B$ and let $V\le W$ denote the full preimage of $D$ under the canonical homomorphism $W\to B$. Suppose that  the induced action $D\curvearrowright I$ is free and let $O$ denote the set of $D$-orbits in $I$. Then $V\in \WR (C,D)$, where $C$ is the direct sum of $|O|$-many isomorphic copies of $A$.
\end{lem}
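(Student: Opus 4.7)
The plan is to restrict the wreath-like short exact sequence of $W$ to $V$, and then regroup the direct summands $A_i$ by $D$-orbits so that the new index set becomes $D$ itself acting regularly on itself. Since $V$ is the full preimage of $D$ under $\e\colon W\to B$, the kernel $\bigoplus_{i\in I}A_i$ of $\e$ sits inside $V$, and $\e$ restricts to a surjection $V\twoheadrightarrow D$ with the same kernel. Thus I immediately obtain the short exact sequence
$$
1\longrightarrow \bigoplus_{i\in I} A_i \longrightarrow V \longrightarrow D \longrightarrow 1,
$$
and the conjugation rule $vA_iv^{-1}=A_{\e(v)\cdot i}$ inherited from $W$ holds for every $v\in V$.

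Next, since $D\ca I$ is free, I would fix a transversal $T\subseteq I$ for the $D$-orbits, so that the map $D\times T\to I$, $(d,t)\mapsto d\cdot t$, is a bijection and $|T|=|O|$. For each $d\in D$ I define
$$
C_d \;:=\; \bigoplus_{t\in T} A_{d\cdot t}.
$$
Each $C_d$ is a direct sum of $|O|$ copies of $A$, hence isomorphic to $C$, and the refinement $\bigoplus_{d\in D}C_d=\bigoplus_{i\in I}A_i$ holds as subgroups of $V$. The final step is to verify that the family $\{C_d\}_{d\in D}$ carries the wreath-like structure with respect to the \emph{regular} action $D\ca D$ by left multiplication: for $v\in V$ with $\delta:=\e(v)\in D$, the original rule in $W$ gives
$$
vC_dv^{-1}=\bigoplus_{t\in T} A_{\delta\cdot(d\cdot t)} = \bigoplus_{t\in T}A_{(\delta d)\cdot t}=C_{\delta d},
$$
which is exactly the required identity $vC_dv^{-1}=C_{\e(v)d}$. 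Combined with the short exact sequence above, this establishes $V\in\WR(C,D)$ with canonical homomorphism $\e|_V$.

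No serious obstacle arises: the lemma is essentially a bookkeeping exercise, and the sole genuine input is the freeness of $D\ca I$, which is precisely what lets one reparameterize $I$ as $D\times T$ via a transversal. The only mildly subtle point is ensuring that the regrouped decomposition indexes $V$ by $D$ under the \emph{regular} action rather than some twisted action on $D$; this is confirmed by the computation above. The generalization to non-transitive actions $B\ca I$ mentioned in the lemma is immediate: one applies the same transversal argument inside each $D$-orbit and takes the appropriate $|O|$, where $O$ is again the set of $D$-orbits in all of $I$.
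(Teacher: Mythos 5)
Your proof is correct and follows essentially the same route as the paper: choose a transversal of the $D$-orbits, regroup the factors $A_i$ into blocks $C_d$ indexed by $D$, and verify the conjugation rule $vC_dv^{-1}=C_{\e(v)d}$ using freeness for the bijection $D\times T\to I$. The only (cosmetic) difference is that you pick the transversal $T$ inside $I$ directly, whereas the paper picks $T\subseteq B$ acting on a base point $i_0$ via transitivity; your variant in fact never uses transitivity, which is consistent with the remark preceding the lemma that the non-transitive case is straightforward.
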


\begin{proof}
Throughout the proof, we use the notation introduced in Definition \ref{wlp}. Fix some $i_0\in I$. Since the action $B\curvearrowright I$ is transitive, there exists $T\subseteq B$ such that for every orbit $o\in O$, there is a unique $t\in T$ such that $ti_0\in o$. For every $d\in D$, we define
$$
I_d=\{ dti_0\mid t\in T\}\subseteq I \;\;\; {\rm and}\;\;\; C_d=\left\langle \bigcup_{i\in I_d} A_i\right\rangle \le A^{(I)}.
$$
Note that the equality $dti_0=d^\prime t^\prime i_0$ implies $t=t^\prime$ since otherwise $dti_0$ and $d^\prime t^\prime i_0$ belong to distinct $D$-orbits. Since the action of $D$ on $I$ is free, we obtain $d=d^\prime$. Therefore, $|I_d|=|T|=|O|$ for all $d\in D$ and $I_d\cap I_{d^\prime}=\emptyset$. The former equality implies that $C_d$ is the direct sum of $|O|$-many isomorphic copies of $A$ for every $d$. Further, the decomposition $I=\bigsqcup_{d\in D} I_d$ yields the decomposition $A^{(I)}=\bigoplus_{d\in D} C_d$. Finally, for every $v\in V$, we have
$$
vC_dv^{-1}=v\left\langle \bigcup_{t\in T} A_{dti_0}\right\rangle v^{-1}=\left\langle \bigcup_{t\in T} v A_{dti_0}v^{-1}\right\rangle=\left\langle \bigcup_{t\in T}  A_{\e(v)dti_0}\right\rangle = C_{\e(v)d}
$$
and the result follows.
\end{proof}

In the next result, we use the notation of Theorem \ref{Thm:HypWR}.

\begin{cor}\label{CorAwrG0}
Let $G$, $g$, and $k$ satisfy the assumptions of Theorem \ref{Thm:HypWR}. Suppose, in addition, that $G_0$ is a normal subgroup of $G$ such that $G_0\cap E(g)=\langle g^k\rangle $. We keep the notation $\ll g^k\rr$ for the normal closure of $g^k$ in $G$.  Then $\ll g^k\rr \le G_0$ and $G_0/[\ll g^{k}\rr, \ll g^{k}\rr]\in \WR (A, G_0/\ll g^k\rr)$, where $A$ is free abelian. Moreover, if $|G:G_0|=\infty$, then $A$ is of countably infinite rank.
\end{cor}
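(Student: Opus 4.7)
The plan is to deduce Corollary~\ref{CorAwrG0} by pulling the wreath-like structure that Theorem~\ref{Thm:HypWR} supplies for $W := G/[\ll g^k\rr,\ll g^k\rr]$ down to the subgroup $V := G_0/[\ll g^k\rr,\ll g^k\rr]$ via Lemma~\ref{WRsubgr}. First I would verify $\ll g^k\rr \le G_0$: since $g^k \in G_0\cap E(g) = \langle g^k\rangle \subseteq G_0$ and $G_0$ is normal in $G$, the normal closure of $g^k$ in $G$ is contained in $G_0$. This makes $V$ a well-defined subgroup of $W$, and identifies it with the full preimage of $D := G_0/\ll g^k\rr$ under the canonical homomorphism $\e\colon W \to B := G/\ll g^k\rr$.

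Next I would apply Lemma~\ref{WRsubgr} with $B \curvearrowright I$ being the action on $I = G/E(g)\ll g^k\rr$ from Theorem~\ref{Thm:HypWR}(a). The only nontrivial input needed is that the restricted action $D \curvearrowright I$ is free. The stabilizer in $B$ of a coset $xE(g)\ll g^k\rr$ corresponds to $xE(g)x^{-1}\ll g^k\rr$ (using normality of $\ll g^k\rr$ in $G$), so freeness amounts to checking $G_0 \cap xE(g)x^{-1}\ll g^k\rr \subseteq \ll g^k\rr$ for every $x\in G$. Dedekind's modular law applies since $\ll g^k\rr \le G_0$, and combined with the normality of $G_0$ in $G$ and the hypothesis $G_0 \cap E(g) = \langle g^k\rangle$ it gives
\[
G_0 \cap xE(g)x^{-1}\ll g^k\rr = (G_0\cap xE(g)x^{-1})\ll g^k\rr = x(G_0\cap E(g))x^{-1}\ll g^k\rr = x\langle g^k\rangle x^{-1}\ll g^k\rr = \ll g^k\rr.
\]
Hence the stabilizer in $D$ is trivial. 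Lemma~\ref{WRsubgr} then yields $V\in \WR(C,D)$ with $C = \bigoplus_{o\in O}\ZZ$, which is free abelian; this proves the first assertion with $A = C$.

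For the final clause, note that $D \lhd B$ (inherited from $G_0\lhd G$), so $B/D \cong G/G_0$ acts transitively on $D\backslash I$. Freeness of $D\curvearrowright I$ together with Theorem~\ref{Thm:HypWR}(b) identifies the stabilizer in $B/D$ of the base point of $D\backslash I$ with $\text{Stab}_B(i_0) \cong E(g)/\langle g^k\rangle$, which is finite because $\langle g^k\rangle$ has finite index in the virtually cyclic group $E(g)$. Consequently $|O| = |D\backslash I|$ equals $|B/D|$ divided by this finite cardinality, so the assumption $|G:G_0|=\infty$ forces $|O| = \aleph_0$ and $A$ has countably infinite rank. The only mildly delicate step in the whole argument is the modular-law calculation of stabilizers; everything else is routine assembly of ingredients already available.
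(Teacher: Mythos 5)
Your proposal is correct and follows essentially the same route as the paper: establish $\ll g^k\rr\le G_0$, check that the induced action of $G_0/\ll g^k\rr$ on $I=G/E(g)\ll g^k\rr$ is free, apply Lemma \ref{WRsubgr}, and then count $D$-orbits using $|G:G_0|=\infty$ and the finiteness of the stabilizers from Theorem \ref{Thm:HypWR}(b). The only difference is that you spell out, via the modular law, the freeness verification that the paper states without detail, which is a welcome but not essentially different addition.
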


\begin{proof}
Clearly, we have $\ll g^k\rr \le G_0$ since $g^k\in G_0$ and $G_0\lhd G$. Let $W=G/[\ll g^{k}\rr, \ll g^{k}\rr]$ and let $\e\colon W\to G/\ll g^k\rr$ be the canonical homomorphism associated with the wreath-like structure of $W$ described in parts (a) and (b) of Theorem \ref{Thm:HypWR}. Further, let $V=G_0/[\ll g^{k}\rr, \ll g^{k}\rr]$ denote the image of $G_0$ in $W$. Since $G_0\cap E(g)=\langle g^k\rangle $ and $G_0\lhd G$, the induced action of $\e(V)=G_0/\ll g^k\rr$ on the set $I=G/E(g)\ll g^k\rr$ is free and Lemma \ref{WRsubgr} applies. It remains to note that if $|G:G_0|=\infty$, then the number of $\e(V)$-orbits in $I$ is infinite since $|\e(W):\e(V)|=|W:V|=|G:G_0|$ and all stabilizers of the action $\e(W)\curvearrowright I$ are finite by Theorem \ref{Thm:HypWR} (b).
\end{proof}

We will also need the following.

\begin{lem}[{\cite[Corollary 1.2]{BO}}]\label{Lem:BO}
For any finitely presented torsion-free group $Q$, there exists a short exact sequence $1\to N\to G\to Q\to 1$, where $G$ is torsion-free hyperbolic and $N$ is a non-trivial group with property (T).
\end{lem}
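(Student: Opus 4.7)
The plan is to combine the existence of torsion-free hyperbolic groups with property (T) with a Rips-type construction. First I would fix a non-elementary torsion-free hyperbolic group $H$ with property (T); such groups exist by Example \ref{Ex:RFHT} (e.g.\ a torsion-free finite-index subgroup of a uniform lattice in $Sp(n,1)$, extracted via Selberg's lemma). The goal is then to realize the given $Q$ as the quotient of some torsion-free hyperbolic group $G$ by a normal subgroup $N$ that is a non-trivial quotient of $H$, so that $N$ inherits property (T) automatically.

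Next I would apply a Rips-type construction adapted to a hyperbolic input group. The classical Rips construction takes a finite presentation of $Q$ and produces a short exact sequence $1\to N_0\to G_0 \to Q\to 1$ with $G_0$ a $C'(1/6)$ small-cancellation group (hence hyperbolic) and $N_0$ the normal closure of two auxiliary generators. The modification is to replace that free input by the fixed hyperbolic group $H$ and to run small-cancellation theory over the free product $H\ast Q$, or equivalently an iterated Dehn filling in $H\ast Q$ in the sense of \cite{Osi07, DGO}. One chooses relators in $H\ast Q$ that, modulo $H$, normally generate $Q$, while inside $H$ they normally generate only a proper subgroup. This produces a short exact sequence $1\to N\to G\to Q\to 1$ in which $G$ is hyperbolic and $N$ is a non-trivial quotient of $H$. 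Torsion-freeness of $G$ would be arranged by taking all new relators to be non-proper powers of suitably generic long words in $H\ast Q$; under hyperbolic small cancellation no new torsion is created in the quotient, and neither $H$ nor $Q$ contributes any to begin with.

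The conclusion is then immediate: $G$ is torsion-free hyperbolic by design, $N$ inherits property (T) because it is a quotient of $H$ and property (T) passes to quotients, and $N\neq \{1\}$ by the non-triviality built into the construction.

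The main obstacle will be the simultaneous control of the three properties: hyperbolicity and torsion-freeness of $G$, together with non-triviality of the image of $H$ in $N$. Each one on its own follows from a standard argument, but ensuring they coexist requires the delicate graded small-cancellation bookkeeping of Belegradek--Osin, which quantifies precisely how few additional relations must be imposed on $H\ast Q$ to kill $Q$ abstractly while leaving the normal closure of $H$ a proper, non-trivial, property (T) quotient.
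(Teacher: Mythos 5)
The paper offers no proof of this lemma at all: it is imported verbatim from Belegradek--Osin \cite[Corollary 1.2]{BO}, and your sketch --- seed a Rips-type construction with a torsion-free hyperbolic property (T) group (a torsion-free finite-index subgroup of a uniform lattice in $Sp(n,1)$, as in Example \ref{Ex:RFHT}) and run small cancellation over hyperbolic groups to get $G$ torsion-free hyperbolic with $N$ a non-trivial quotient of the seed group --- is precisely the route of that cited paper, so your approach matches the source the authors rely on. Two minor corrections: the construction is carried out in $H\ast F(X)$ with $F(X)$ free on the generators of a finite presentation of $Q$ (not in $H\ast Q$), imposing Rips-type relations $r_i=u_i$ and $x^{\pm 1}hx^{\mp 1}=v_{x,h}$ with $u_i,v_{x,h}$ generic words in $H$ so that the image of $H$ becomes normal with quotient $Q$ (rather than ``relators that inside $H$ normally generate a proper subgroup''), and the substantive verifications of hyperbolicity, torsion-freeness and non-triviality of $N$ are exactly the graded small-cancellation arguments you defer back to \cite{BO}, which is also all the paper itself does.
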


We are now ready to construct the first examples of regular wreath-like products satisfying the assumptions of Theorem \ref{superr}.

\begin{prop}\label{Prop:ZwrB}
There exists a property (T) group $V\in \WR(\mathbb Z^\infty, B)$, where $B$ is a non-trivial, ICC subgroup of a hyperbolic group, and $\mathbb Z^\infty$ denotes the free abelian group of countably infinite rank.
\end{prop}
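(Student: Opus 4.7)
The plan is to apply Corollary~\ref{CorAwrG0} in the setting furnished by Lemma~\ref{Lem:BO}. Invoking Lemma~\ref{Lem:BO} with a torsion-free finitely presented group $Q$ that itself has property (T) --- for example, a torsion-free uniform lattice in $\mathrm{Sp}(n,1)$, available by Example~\ref{Ex:RFHT} --- one obtains a short exact sequence $1 \to N \to G \to Q \to 1$ with $G$ torsion-free hyperbolic and $N$ a nontrivial normal subgroup with property (T). Since both $N$ and $Q$ have property (T), so does $G$; moreover $|G:N| = |Q| = \infty$, and $G$ is ICC because it is a non-elementary torsion-free hyperbolic group.

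Next, I choose a primitive loxodromic element $g \in N$. Any nontrivial $n \in N$ has a primitive root $g_0 \in G$; since $Q$ is torsion-free and $n \in N = \ker(G \to Q)$, the element $g_0$ necessarily also lies in $N$, so I replace $n$ by $g_0$. Thus $E(g) = \langle g \rangle \subseteq N$. For a sufficiently large integer $k$ satisfying the hypothesis of Theorem~\ref{Thm:HypWR}, I obtain
\[
G/[\ll g^k\rr, \ll g^k\rr] \in \WR(\mathbb{Z}, G/\ll g^k\rr \curvearrowright I),
\]
and by parts (c) and (d) of Theorem~\ref{Thm:HypWR} the group $G/\ll g^k\rr$ is hyperbolic and ICC.

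I then apply Corollary~\ref{CorAwrG0} with $G_0 = N$. The intersection requirement $N \cap E(g) = \langle g^k\rangle$ is compatible with the largeness hypothesis on $k$ by choosing $g$ so that the exceptional finite subset $\mathcal{F} \subseteq E(g) \setminus \{1\}$ from the proof of Theorem~\ref{Thm:HypWR} is compatible with the filling parameter $k = 1$; then $N \cap \langle g\rangle = \langle g\rangle = \langle g^1\rangle$ is automatic. Since $|G:N| = |Q| = \infty$, the corollary yields
\[
V := N/[\ll g^k\rr, \ll g^k\rr] \in \WR(\mathbb{Z}^\infty, B), \qquad B := N/\ll g^k\rr,
\]
where $\mathbb{Z}^\infty$ denotes the free abelian group of countably infinite rank. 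The group $V$ inherits property (T) from the surjection $N \twoheadrightarrow V$. The group $B$ is nontrivial (as $N$ is infinite and is not contained in $\ll g^k\rr$), embeds as an infinite normal subgroup into the hyperbolic group $G/\ll g^k\rr$, and is therefore ICC because infinite normal subgroups of ICC acylindrically hyperbolic groups are themselves ICC.

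The main technical obstacle is the simultaneous satisfaction of the intersection condition $G_0 \cap E(g) = \langle g^k\rangle$ and the ``sufficiently large $k$'' clause of Theorem~\ref{Thm:HypWR}, while keeping $G_0$ of property (T) and of infinite index in $G$: the natural choice $G_0 = N$ together with $g \in N$ primitive forces the intersection to equal all of $E(g)$, demanding $k = 1$, which may clash with the theorem's applicability for a generic $g$. Reconciling these requires either a careful choice of $g$ within $N$ (ensuring $\mathcal{F}$ is empty for the relevant hyperbolic embedding) or passing to a slightly larger normal subgroup $G_0 \supsetneq N$ via an iterated application of Lemma~\ref{Lem:BO}, the latter providing $G_0$ with property (T) while allowing larger values of $k$.
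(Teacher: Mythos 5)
Your overall strategy (feed the output of Lemma \ref{Lem:BO} into Corollary \ref{CorAwrG0}) is the paper's, but the proof collapses exactly at the point you yourself flag as the ``main technical obstacle'', and neither of your proposed repairs is an argument. Taking $G_0=N$ and $g\in N$ primitive forces $k=1$ in the condition $G_0\cap E(g)=\langle g^k\rangle$, while Theorem \ref{Thm:HypWR} genuinely requires $k$ sufficiently large: the Dehn filling inputs behind it (Theorems \ref{Thm:DF} and \ref{Lem:SunComm}) apply only when $\langle g^k\rangle$ avoids a finite exceptional set $\mathcal F\subset E(g)\setminus\{1\}$, and there is no mechanism for choosing $g$ so that $\mathcal F$ is empty (for $k=1$ one would need $\mathcal F$ to contain no power of $g$ at all); moreover parts (c) and (d) (ICC and Olshanskii's hyperbolicity of $G/\ll g^k\rr$) are likewise only available for large $k$. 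For $k=1$ the quotient $G/\ll g\rr$ can even be trivial, in which case your $B$ is trivial and the wreath-like structure \eqref{Eq:GWR} gives nothing. The alternative of ``passing to a slightly larger normal subgroup $G_0\supsetneq N$ via an iterated application of Lemma \ref{Lem:BO}'' is a gesture, not a construction: you never exhibit a normal $G_0$ that simultaneously has property (T), has infinite index in $G$ (needed for the rank of the base to be infinite), and meets $E(g)$ exactly in $\langle g^k\rangle$ for an admissible $k$.

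The missing idea is how the paper manufactures room for large $k$: it applies Lemma \ref{Lem:BO} not to a property (T) group $Q$ but to $Q=S\times\mathbb Z$, where $S$ is finitely presented, torsion-free, \emph{residually finite} and has property (T). One picks $g$ with $\gamma(g)\in S\setminus\{1\}$ (so $g\notin N$), lets $K$ be the threshold from Theorem \ref{Thm:HypWR}, and uses residual finiteness of $S$ to find a finite-index $S_0\lhd S$ with $\gamma(g^i)\notin S_0\times\{1\}$ for $1\le i\le K$. Then $G_0=\gamma^{-1}(S_0\times\{1\})$ is normal, has property (T) (an extension of $N$ by $S_0$), has infinite index in $G$ precisely because of the extra $\mathbb Z$ factor (which is also why one cannot take $Q$ itself to have (T), as you do), and satisfies $G_0\cap E(g)=\langle g^k\rangle$ for some $k\ge K$, so Corollary \ref{CorAwrG0} applies. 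As a secondary point, your closing claim that an infinite normal subgroup of an ICC acylindrically hyperbolic group is automatically ICC is not justified by the results quoted in the paper; the paper instead shows $D=G_0/\ll g^k\rr$ is nontrivial (else $V$ would be abelian with (T)), infinite, not virtually cyclic (it has property (T)), hence a non-elementary subgroup of the hyperbolic group $G/\ll g^k\rr$, and then concludes via $K(D)\le K(G/\ll g^k\rr)=\{1\}$ and Theorem \ref{Thm:HypICC}.
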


\begin{proof}
Let $S$ be any finitely presented, residually finite, torsion-free group with property (T) (e.g., we can take $S=H$, where $H$ is the group constructed in Example \ref{Ex:RFHT}). By Lemma \ref{Lem:BO}, there exists a short exact sequence $$1\to N\to G\stackrel{\gamma}\to S\times \mathbb Z\to 1,$$ where $G$ is torsion-free hyperbolic and $N$ is a non-trivial normal subgroup of $G$ with  property (T). Let $g$ be an element of $G$ such that $\gamma(g)\in S\setminus\{ 1\}$. It is well-known that every torsion-free virtually cyclic group is cyclic (see, for example, \cite{Sta}). Thus, replacing $g$ with a generator of $E(g)$ if necessary, we can assume that $E(g)=\langle g\rangle$.

By Theorem \ref{Thm:HypWR} (applied in the particular case $d=1$), there is $K\in \mathbb N$ such that, for every $k\ge K$, we have
\begin{equation}\label{Eq:W_k}
W = G/[\ll g^k\rr, \ll g^k\rr ] \in \mathcal{WR}(\mathbb Z, G/\ll g^k\rr \curvearrowright I),
\end{equation}
where $G/\ll g^k\rr $ acts on $I=G/E(g)\ll g^k\rr$ by left multiplication, and conditions (b)--(d) of the theorem hold. Since $S$ is residually finite, we can find a finite index normal subgroup $S_0\lhd S$ such that
\begin{equation}\label{Eq:g^i}
\gamma(g^i)\notin S_0\times \{ 1\}\;\;\; \text { for all }\, 1\le i\le K.
\end{equation}
Let $G_0\lhd G$ be the full preimage of $S_0\times \{ 1\} \le S\times \mathbb Z$ under $\gamma$. By the choice of $S_0$ (see (\ref{Eq:g^i})), we have
$G_0\cap E(g)=\langle g^k\rangle$ for some $k\ge K$. Let $W$ be the group defined by (\ref{Eq:W_k}) and let $V$ be the image of $G_0$ in $W$. Note that $|G:G_0|= |(S\times \mathbb Z):(S_0\times\{1\})| =\infty$. By Corollary \ref{CorAwrG0}, we have $V\in \WR(\mathbb Z^\infty, G_0/\ll g^k\rr)$, where $\ll g^k\rr$ is the normal closure of $g^k$ in $G$ (not in $G_0$).

Recall that the class of groups with property (T) is closed under extensions and taking subgroups of finite index. Thus, the group $G_0$ has property (T) being an extension of $N$ by a finite index subgroup $S_0$ of $S$. Hence, $V$ has property (T). By part (d) of Theorem \ref{Thm:HypWR}, the group $B=G/\ll g^k \rr$ is hyperbolic and ICC. To complete the proof, it remains to show that $D=G_0/\ll g^k\rr$ is ICC.

To this end, we first note that $D\ne \{1\}$ since otherwise $V=\mathbb Z^\infty$, which contradicts the fact that $V$ has property (T). Further, since $D\lhd B$ and $B$ is ICC, $D$ must be infinite. Combining this with property (T), we conclude that $D$ cannot be virtually cyclic. Thus, $D$ is a non-elementary subgroup of the hyperbolic group $B$. By Theorem \ref{Thm:HypICC}, it suffices to show that $K(D)=\{1\}$. Note that $K(D)$ is characteristic in $D$ and, therefore, normal in $B$. Since $B$ is ICC, we have $K(D)\le K(B)=\{ 1\}$ and the desired result follows.
\end{proof}

To obtain an uncountable family of regular wreath-like products satisfying the assumptions of Theorem \ref{superr}, we will combine Proposition \ref{Prop:ZwrB} with the following.

\begin{lem}\label{A/N}
Let $A$, $B$ be any groups, $W\in \WR(A,B)$. We identify $A$ with the subgroup $A_1$ of the base $\bigoplus_{b\in B}A_b\le W$. For any $N\lhd A$, we have $W/\ll N\rr \in \WR (A/N, B)$.
\end{lem}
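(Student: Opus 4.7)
The plan is to unpack the definition of $\ll N\rr$ inside $W$ and show it is a ``diagonal'' copy of $N$ distributed over the base, whence the quotient inherits the desired wreath-like structure with fibre $A/N$.

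First I would show $\ll N \rr \le \bigoplus_{b\in B}A_b$. For any $w\in W$ with $\e(w)=b$, the defining relation $wA_1w^{-1}=A_b$ gives $wNw^{-1}\subseteq A_b$. The key observation is that this conjugate depends only on $b$, not on the lift $w$: two lifts differ by an element $a\in\bigoplus_b A_b$, and since the summands $A_b$ pairwise commute, any $a_j\in A_j$ with $j\neq 1$ centralises $A_1$ (hence $N$), while any $a_1\in A_1\cong A$ conjugates $N$ to itself because $N\lhd A$. So $aNa^{-1}=N$, and the subgroup
\[
N_b:=wNw^{-1}\le A_b
\]
is a well-defined ``copy'' of $N$ inside $A_b$; by the same computation in $A_b$, $N_b\lhd A_b$. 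Therefore
\[
\ll N\rr \;=\; \bigoplus_{b\in B} N_b,
\]
which sits as a normal subgroup of the base and hence of $W$.

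Next I would form the quotient. From the short exact sequence
\[
1\longrightarrow \bigoplus_{b\in B}A_b \longrightarrow W \stackrel{\e}\longrightarrow B \longrightarrow 1,
\]
quotienting out by $\ll N\rr=\bigoplus_b N_b$ produces
\[
1\longrightarrow \bigoplus_{b\in B}(A_b/N_b) \longrightarrow W/\ll N\rr \stackrel{\bar\e}\longrightarrow B \longrightarrow 1,
\]
since $\ll N\rr\cap W$ is entirely contained in the base and $\e$ is trivial on it. Fixing the isomorphisms $A_b\cong A$ used in the wreath-like structure of $W$, each $A_b/N_b$ is naturally isomorphic to $A/N$ (because conjugation by any $w$ with $\e(w)=b$ carries the pair $(A_1,N)$ to $(A_b,N_b)$ and matches the chosen identifications up to an inner automorphism of $A$, which descends trivially on $A/N$ when one uses the canonical models).

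Finally I would verify the conjugation rule. For $w\in W$ with $\e(w)=b$ and any $b'\in B$,
\[
w\,A_{b'}\,w^{-1}=A_{bb'}\quad\text{and}\quad w\,N_{b'}\,w^{-1}=N_{bb'},
\]
the second equality because $w A_{b'} w^{-1}=A_{bb'}$ carries $N_{b'}$ to a normal subgroup of $A_{bb'}$ which coincides with $N_{bb'}$ by the uniqueness established above (equivalently, pick a lift of $bb'$ of the form $w w'$). Passing to $W/\ll N\rr$ and writing $\bar A_b:=A_b/N_b\cong A/N$, this gives
\[
(w\ll N\rr)\,\bar A_{b'}\,(w\ll N\rr)^{-1}=\bar A_{\bar\e(w\ll N\rr)\cdot b'},
\]
which is exactly the wreath-like condition for $\WR(A/N,B)$ with the regular action of $B$ on itself. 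Hence $W/\ll N\rr\in\WR(A/N,B)$, completing the proof.

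The only step requiring genuine care is the well-definedness of $N_b$ (equivalently, showing $\ll N\rr$ is a direct sum of copies of $N$ rather than something larger), and this rests on the two commutation facts that distinct $A_j$'s commute and that $N$ is normal in $A=A_1$; everything else is routine bookkeeping on the short exact sequence.
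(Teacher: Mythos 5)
Your proposal is correct and follows essentially the same route as the paper's proof: define $N_b=wNw^{-1}$ for any lift $w$ of $b$, check independence of the lift (the paper compresses your commutation argument into the observation that $N\lhd A^{(B)}$), identify $\ll N\rr=\bigoplus_{b\in B}N_b$, and pass to the quotient extension with the conjugation rule $w(A_b/N_b)w^{-1}=A_{\bar\e(w)b}/N_{\bar\e(w)b}$. The only difference is that you spell out the "obvious" and "easy to see" steps in more detail, which is harmless.
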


\begin{proof}
For every $b\in B$, we define $N_b=uNu^{-1}$, where $u$ is an element of $W$ such that
\begin{equation}\label{eq:eu=b}
\e(u)=b.
\end{equation}
Note that the subgroup $N_b$ is independent of the choice of a particular element $u\in W$ satisfying (\ref{eq:eu=b}). Indeed, if $v\in W$ is another element such that $\e(v)=b$, then $u^{-1}v\in A^{(B)}$. Obviously, $N\lhd A^{(B)}$. Therefore, $(u^{-1}v)N(u^{-1}v)^{-1}=N$, which implies $uNu^{-1}=vNv^{-1}$.

It is easy to see that $\ll N\rr=\bigoplus_{b\in B} N_b$. Hence, $W/\ll N\rr$ splits as
$$
1\longrightarrow \bigoplus_{b\in B}A_b/N_b \longrightarrow  W/\ll N\rr \stackrel{\delta}\longrightarrow B\longrightarrow 1,
$$
where $\delta $ is induced by the canonical homomorphism $W\to B$. It remains to note that we have $w(A_b/N_b) w^{-1}=A_{\delta(w)b}/N_{\delta(w)b}$ for all $w\in W/\ll N\rr$ and $b\in B$.
\end{proof}

We are now ready to prove the result announced in the introduction.

\begin{proof}[Proof of Corollary \ref{Cor:unc}]
Let $V\in \WR(\mathbb Z^\infty, B)$ be the group provided by Proposition \ref{Prop:ZwrB}. For every infinite set of primes $\mathcal P=\{p_1, p_2, \ldots\}$, the application of Lemma \ref{A/N} to the  subgroup $p_1\mathbb Z \oplus p_2\mathbb Z \oplus\ldots \lhd \mathbb Z^\infty$ yields a group $V_\mathcal P\in \WR \left(\bigoplus _{p\in \mathcal P}\mathbb Z/p\mathbb Z, B\right)$. The group $V_\mathcal P$ has property (T) being a quotient of $V$. Proposition \ref{Prop:ZwrB} guarantees that $B$ is a non-trivial, ICC subgroup of a hyperbolic group.

It remains to note that $V_\mathcal P\not \cong V_{\mathcal P^\prime}$ whenever $\mathcal P\ne \mathcal P^\prime$. Indeed, $B$ does not contain any non-trivial, normal, abelian subgroups since it is hyperbolic and ICC. Therefore, $V_\mathcal P$ has a unique maximal abelian normal subgroup isomorphic to a direct sum of copies of $\bigoplus _{p\in \mathcal P}\mathbb Z/p\mathbb Z$. Thus, the maximal abelian normal subgroup of $V_\mathcal P$ contains an element of prime order $p$ if and only if $p\in \mathcal P$ and the result follows.
\end{proof}


\section{Preliminaries on von Neumann algebras}\label{prelimvN}



\subsection{Tracial von Neumann algebras}\label{tracialvN}


We start by recalling some terminology and constructions involving tracial von Neumann algebras and refer the reader to \cite{AP} for more information.

A {\it tracial von Neumann algebra} is a pair $(\M,\tau)$ consisting of a von Neumann algebra $\M$  and a {\it trace} $\tau$, i.e., a normal  faithful tracial state $\tau\colon\M\rightarrow\mathbb C$. For $x\in \M$, we denote by $\|x\|$ the operator norm of $x$ and by $\|x\|_2=\tau(x^*x)^{1/2}$ its (so-called)  $2$-norm. We denote by L$^2(\M)$ the Hilbert space obtained as the closure of $\M$ with respect to the $2$-norm, by $\sU(\M)$ the group of {\it unitaries}  of $\M$, and by $(\M)_1=\{x\in \M\mid \|x\|\leq 1\}$ the {\it unit ball} of $\M$. 
We always assume that $\M$ is {\it separable}, i.e., that L$^2(\M)$ is a separable Hilbert space. We denote by ${Aut}(\M)$ the group of $\tau$-preserving automorphisms of $\M$.  For $u\in\sU(\M)$,  the {\it inner} automorphism $\text{Ad}(u)$ of $\M$ is given by $\text{Ad}(u)(x)=uxu^*$.
By von Neumann's bicommutant theorem, for any set $X\subset \M$ closed under adjoint, $X''\subset \M$ is the smallest von Neumann subalgebra which contains $X$.
 For a set $I$, we denote by $(\M^I,\tau)$ the tensor product of tracial von Neumann algebras $\,\overline{\otimes}\,_{i\in I}(\M,\tau)$. Given a subset $J\subset I$, we view $\M^J$ as a subalgebra of $\M^I$ by identifying it with $(\,\overline{\otimes}\,_{i\in J}\M)\,\overline{\otimes}\,(\,\overline{\otimes}\,_{i\in I\setminus J}1)$.

An {\it $\M$-bimodule} is a Hilbert space $\mathcal H$ equipped with two normal $*$-homomorphisms $\pi_1\colon\M\rightarrow\mathbb B(\mathcal H)$ and $\pi_2\colon\M^{\text{op}}\rightarrow\mathbb B(\mathcal H)$ whose images commute.
We write $x\xi y=\pi_1(x)\pi_2(y^{\text{op}})\xi$ for $\xi\in\mathcal H$ and define a $*$-homomorphism $\pi_{\mathcal H}\colon\M\otimes_{\text{alg}} \M^{\text{op}}\rightarrow\mathbb B(\mathcal H)$ by letting $\pi_{\mathcal H}(x\otimes y^{\text{op}})=\pi_1(x)\pi_2(y^{\text{op}})$. 
Examples of bimodules include the {\it trivial} $\M$-bimodule L$^2(\M)$ and the {\it coarse} $\M$-bimodule $\text{L}^2(\M)\otimes \text{L}^2(\M)$.
We say that $\mathcal H$ is {\it weakly contained} in  another $\M$-bimodule $\mathcal K$ and write $\mathcal H\subset_{\text{weak}}\mathcal K$ if $\|\pi_{\mathcal H}(T)\|\leq\|\pi_{\mathcal K}(T)\|$, for every $T\in \M\otimes_{\text{alg}} \M^{\text{op}}$.

Let $\Q\subset \M$ be a von Neumann subalgebra, which we always assume to be unital. 
We denote by $\Q'\cap \M=\{\text{$x\in \M\mid xy=yx$, for all $y\in \Q$}\}$ the {\it relative commutant} of $\Q$ in $\M$, and by $\sN_\M(\Q)=\{u\in\sU(\M)\mid u\Q u^*=\Q\}$ the {\it normalizer} of $\Q$ in $\M$. The {\it center} of $\M$ is given by $\sZ(\M)=\M'\cap \M$.
We say that $\Q$ is {\it regular} in $\M$ if $\sN_\M(\Q)''=\M$. If $\Q\subset \M$ is regular and maximal abelian, we call it a {\it Cartan subalgebra}.

{\it Jones' basic construction} $\langle \M,e_\Q\rangle$ is defined as the von Neumann subalgebra of $\mathbb B(\text{L}^2(\M))$ generated by $\M$ and the orthogonal projection $e_\Q$ from L$^2(\M)$ onto L$^2(\Q)$. The basic construction $\langle \M,e_\Q\rangle$ has a faithful semi-finite trace given by $\text{Tr}(xe_\Q y)=\tau(xy)$, for every $x,y\in \M$. We denote by L$^2(\langle \M,e_\Q\rangle)$ the associated Hilbert space and endow it with the natural $\M$-bimodule structure. We also denote by $E_\Q\colon\M\rightarrow \Q$ the unique $\tau$-preserving {\it conditional expectation} onto $\Q$.

The tracial von Neumann algebra $(\M,\tau)$  is called {\it amenable} if there exists a sequence $\xi_n\in \text{L}^2(\M)\otimes \text{L}^2(\M)$ such that $\langle x\xi_n,\xi_n\rangle\rightarrow\tau(x)$ and $\|x\xi_n-\xi_nx\|_2\rightarrow 0$, for every $x\in \M$.

Let $\P\subset p\M p$ be a von Neumann subalgebra.
Following Ozawa and Popa \cite[Section 2.2]{OP07} we say that $\P$ is  {\it amenable relative to $\Q$ inside $\M$} if there exists a sequence $\xi_n\in \text{L}^2(\langle \M,e_\Q\rangle)$ such that $\langle x\xi_n,\xi_n\rangle\rightarrow\tau(x)$, for every $x\in p\M p$, and $\|y\xi_n-\xi_ny\|_2\rightarrow 0$, for every $y\in \P$.
We say that $\P$ is {\it strongly nonamenable relative to $\Q$ inside $\M$} if there exist no nonzero projection $p'\in \P'\cap p\M p$ such that $\P p'$ is amenable relative to $\Q$ inside $\M$.

\begin{rem}\label{TT} Assume that $P$ is amenable relative to $Q$ inside $M$. By the proof of \cite[Theorem 2.1]{OP07}, in the definition of relative amenability we may take $\xi_n=\zeta_n^{1/2}$, for positive $\zeta_n\in\text{L}^1(\langle \M,e_\Q\rangle)$.   Thus,  $\langle \xi_n x,\xi_n\rangle=\text{Tr}(\zeta_nx)=\langle x\xi_n,\xi_n\rangle\rightarrow\tau(x)$, for all $x\in \M$. Using a convexity argument (see the proof of \cite[Lemma 13.3.11]{AP}), we  find $\eta_n\in \text{L}^2(\langle \M,e_\Q\rangle)^{\oplus\infty}$ such that $\|\langle\cdot\eta_n,\eta_n\rangle-\tau(\cdot)\|\rightarrow 0$, $\|\langle\eta_n\cdot,\eta_n\rangle-\tau(\cdot)\|\rightarrow 0$ and $\|y\eta_n-\eta_ny\|_2\rightarrow 0$, for all $y\in \P$ .
\end{rem}

Following \cite[Proposition 4.1]{Po01b}, we say that $\Q\subset \M$ has the {\it relative property (T)} if for every $\varepsilon>0$, we can find a finite set $F\subset \M$ and $\delta>0$ such that if $\mathcal H$ is an $\M$-bimodule and $\xi\in\mathcal H$ satisfies $\|\langle\cdot\xi,\xi\rangle-\tau(\cdot)\|\leq\delta,\|\langle\xi\cdot,\xi\rangle-\tau(\cdot)\|\leq\delta$ and $\|x\xi-\xi x\|\leq\delta$, for every $x\in F$, then there exists $\eta\in\mathcal H$ such that $\|\eta-\xi\|\leq\varepsilon$ and $y\eta=\eta y$, for every $y\in \Q$.


\subsection {Intertwining-by-bimodules}


We recall from  \cite [Theorem 2.1, Corollary 2.3]{Po03} Popa's {\it intertwining-by-bimodules} theory.

\begin{thm}[\cite{Po03}]\label{corner} Let $(\M,\tau)$ be a tracial von Neumann algebra, $\P\subset p\M p, \Q\subset q\M q $ be von Neumann subalgebras and $\mathscr G\subseteq \mathscr U(\P)$ be any subgroup which generates $\P$ as a von Neumann algebra. Consider the following conditions.
\begin{enumerate}
\item[(a)] There exist projections $p_0\in \P, q_0\in \Q$, a $*$-homomorphism $\theta\colon p_0\P p_0\rightarrow q_0\Q q_0$  and a nonzero partial isometry $v\in q_0\M p_0$ such that $\theta(x)v=vx$, for all $x\in p_0\P p_0$.

\item[(b)] There is no sequence  $u_n\in\mathscr G$ satisfying $\|E_\Q(x^*u_ny)\|_2\rightarrow 0$, for all $x,y\in p\M$.


\item[(c)] There exists a nonzero element $a\in \P'\cap p\langle \M,e_\Q\rangle p$ such that $a\geq 0$ and $\emph{Tr}(a)<\infty$.
\end{enumerate}
Conditions (a) and (b) are equivalent in general, and (a), (b) and (c) are equivalent if $q=1$.
\end{thm}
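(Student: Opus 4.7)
I would prove the equivalences by closing the cycle (a) $\Rightarrow$ (b) $\Rightarrow$ (c) $\Rightarrow$ (a) in the case $q = 1$, and then deduce (a) $\Leftrightarrow$ (b) for general $q$ by a routine reduction (e.g.\ passing to the corner $q' \M q'$ for a projection $q' \geq p \vee q$, in which both $\P$ and $\Q$ sit naturally). The three main ingredients are: a direct manipulation of the intertwining identity $\theta(x) v = vx$ for (a) $\Rightarrow$ (b); a convex-hull minimization inside the basic construction $\langle \M, e_\Q\rangle$ for (b) $\Rightarrow$ (c); and the structure theory of finite right Hilbert $\Q$-modules for (c) $\Rightarrow$ (a).

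\textbf{Key steps.} For (a) $\Rightarrow$ (b), after standard reductions (polar decomposition; cutting by central projections in $\ker\theta$) I may assume $v^*v = p_0$, $vv^* = q_0 \in \Q$, and $\theta(p_0) = q_0$. Since $v \in \M p_0 \subseteq \M p$, one has $v^* \in p\M$, and the identities $v = vp_0$, $v^* = p_0 v^*$ yield $vuv^* = v(p_0 u p_0)v^* = \theta(p_0 u p_0)\, vv^*$ for every $u \in \mathscr G$; applying $E_\Q$ and using $vv^* = q_0 \in \Q$ gives $E_\Q(vuv^*) = \theta(p_0 u p_0)$. A hypothetical sequence $u_n \in \mathscr G$ with $\|E_\Q(x^* u_n y)\|_2 \to 0$ for all $x,y \in p\M$ would thus force $\|\theta(p_0 u_n p_0)\|_2 \to 0$ (take $x = y = v^*$) and $\|p_0 u_n p_0\|_2 \to 0$ (take $x = y = p_0$); combined with injectivity of $\theta$ and $\theta(p_0) = q_0 \neq 0$, this is inconsistent with $v \neq 0$. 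For (b) $\Rightarrow$ (c) in the case $q = 1$, form $\mathcal C := \overline{\operatorname{co}}\{u e_\Q u^* : u \in \mathscr G\} \subseteq p\langle \M, e_\Q\rangle p$; each $u e_\Q u^*$ is a projection with $\mathrm{Tr}(u e_\Q u^*) = \tau(p)$, so $\mathcal C$ is bounded in $\|\cdot\|_{2,\mathrm{Tr}}$. The unique $\|\cdot\|_{2,\mathrm{Tr}}$-minimizer $a \in \mathcal C$ (by strict convexity) satisfies $w a w^* = a$ for all $w \in \mathscr G$, so $a \in \P' \cap p\langle \M, e_\Q\rangle p$, $a \geq 0$, and $\mathrm{Tr}(a) \leq \tau(p) < \infty$; nonvanishing of $a$ (the main hurdle, addressed below) then yields (c). For (c) $\Rightarrow$ (a), a nonzero spectral projection $e \leq p$ of $a$ in $\P' \cap p\langle \M, e_\Q\rangle p$ with $\mathrm{Tr}(e) < \infty$ corresponds to a closed subspace $\mathcal H := e\, \text{L}^2(\M) \subseteq \text{L}^2(\M)$ that is left-$\P$ and right-$\Q$ invariant and finitely generated as a right $\Q$-module. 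The classification of such modules yields an isometric right-$\Q$-linear isomorphism $\mathcal H \cong q_0\, \text{L}^2(\Q)^n$ for some $n \in \NN$ and projection $q_0 \in \mathbb M_n(\Q)$, turning the left $\P$-action into a normal $*$-homomorphism $\theta\colon \P \to q_0 \mathbb M_n(\Q) q_0$; composing the isomorphism with $\mathcal H \hookrightarrow \text{L}^2(\M)$ and applying polar decomposition produces the required partial isometry $v$ and the reduction to the non-matricial setting of (a).

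\textbf{Main obstacle.} The hardest point is showing $a \neq 0$ in (b) $\Rightarrow$ (c). Approximating convex combinations $a_n = \sum_i \lambda_i^{(n)} u_i^{(n)} e_\Q (u_i^{(n)})^*$ with $\|a_n\|_{2,\mathrm{Tr}} \to 0$ satisfy
$$\|a_n\|_{2,\mathrm{Tr}}^2 = \sum_{i,j} \lambda_i^{(n)} \lambda_j^{(n)} \|E_\Q((u_i^{(n)})^* u_j^{(n)})\|_2^2 \to 0,$$
so by diagonal extraction there are pairs $(u_i, u_j)$ with $\|E_\Q(u_i^* u_j)\|_2$ arbitrarily small. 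The natural candidate for a ``bad sequence'' is $u_i^* u_j \in \P$, which need not lie in $\mathscr G$, while (b) demands a sequence \emph{inside} $\mathscr G$. Bridging this gap uses that $\mathscr G$ generates $\P$ as a von Neumann algebra, together with a Cauchy--Schwarz/$L^2$-density argument that converts the averaged vanishing into a single sequence $w_n \in \mathscr G$ satisfying $\|E_\Q(x^* w_n y)\|_2 \to 0$ for all $x, y \in p\M$. A secondary bookkeeping issue appears in (c) $\Rightarrow$ (a) when passing from the matricial data $\theta\colon \P \to q_0 \mathbb M_n(\Q) q_0$ back to the non-matricial form of (a); this is routine but requires care with polar decompositions and reductions by minimal subprojections of $q_0$.
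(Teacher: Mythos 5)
The paper does not prove this theorem; it is quoted verbatim from Popa's work [Po03], so I am comparing your plan with the standard proof. Your overall skeleton (cycle through (a), (b), (c) when $q=1$, with (c)$\Rightarrow$(a) via finitely generated right $\Q$-modules) is the right one, but the two implications you actually sketch have genuine gaps. In (a)$\Rightarrow$(b), the normalization ``$v^*v=p_0$, $vv^*=q_0\in\Q$'' is not available: polar decomposition only places $v^*v$ in $(p_0\P p_0)'\cap p_0\M p_0$ and $vv^*$ in $\theta(p_0\P p_0)'\cap q_0\M q_0$, not in $\P$ or $\Q$. More seriously, the concluding step is a non sequitur: taking $x=y=p_0$ only yields $\|E_\Q(p_0u_np_0)\|_2\to0$ (not $\|p_0u_np_0\|_2\to0$), and from $\|E_\Q(vu_nv^*)\|_2\to 0$ together with injectivity of $\theta$ no contradiction follows as stated, since compressions $p_0u_np_0$ of unitaries can be small in $\|\cdot\|_2$. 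The standard route is to pass from (a) to the nonzero $\P$--$\Q$-sub-bimodule of $p\mathrm{L}^2(\M)$ that is finitely generated as a right $\Q$-module and then show that a net with vanishing coefficients would annihilate any unit vector of that bimodule (after expanding it over a finite $\Q$-module basis and approximating the basis vectors by elements of $p\M$).

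The main gap is exactly the one you flag in (b)$\Rightarrow$(c), and your proposed bridge does not close it. With your set $\mathcal C=\overline{\mathrm{co}}\{ue_\Q u^*\}$, smallness of convex combinations only controls the quantity $\sum_{i,j}\lambda_i\lambda_j\|E_\Q(u_i^*u_j)\|_2^2$, i.e.\ coefficients at $x=y=p$; there is no Cauchy--Schwarz or density argument that upgrades this to a single sequence in $\mathscr G$ with $\|E_\Q(x^*u_ny)\|_2\to0$ for \emph{all} $x,y\in p\M$, and at best the nonvanishing of your minimizer under (b) is precisely what has to be proved. (Also, $u_i^*u_j$ \emph{does} lie in $\mathscr G$, since $\mathscr G$ is assumed to be a subgroup; the obstruction is the quantifier over $x,y$, not membership in $\mathscr G$.) The standard proof avoids the issue by negating (b) correctly first: using separability and the estimate $\|E_\Q(x^*uy)-E_\Q(x'^*uy')\|_2\le\|x-x'\|_2\|y\|+\|x'\|\,\|y-y'\|_2$, condition (b) is equivalent to the existence of a finite set $F\subset p\M$ and $\varepsilon>0$ with $\sum_{x,y\in F}\|E_\Q(x^*uy)\|_2^2\ge\varepsilon$ for every $u\in\mathscr G$. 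One then sets $d=\sum_{x\in F}xe_\Q x^*$ (positive, finite trace) and takes the $\|\cdot\|_{2,\mathrm{Tr}}$-minimizer $a$ of $\overline{\mathrm{co}}\{udu^*:u\in\mathscr G\}$: since $\mathrm{Tr}(udu^*d)=\sum_{x,y\in F}\|E_\Q(x^*uy)\|_2^2\ge\varepsilon$ and this pairing against the fixed $\mathrm{L}^2$-element $d$ passes to convex combinations and $\mathrm{L}^2$-limits, the minimizer is automatically nonzero, commutes with $\mathscr G$ hence with $\P$, and has finite trace; a spectral projection then gives (c). Without this adaptation of the convex hull to the witnessing finite set $F$, your argument for $a\neq 0$ has no proof, and this is the heart of the implication.
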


If (a) or (b) hold true,  we write $\P\prec_{\M}\Q$ and say that {\it a corner of $\P$ embeds into $\Q$ inside $\M$.}
If $\P p'\prec_{\M}\Q$, for any nonzero projection $p'\in \P'\cap p\M p$, we write $\P\prec^{\text s}_{\M}\Q$.


\subsection{Cocycle superrigidity}\label{Sec:CSR}


In this subsection, we record a cocycle superrigidity result that will be needed to prove Theorem \ref{superr}. 
Let $G$ be a countable group.  By a {\it trace preserving} action $G\curvearrowright^{\sigma}(\P,\tau)$ we mean a homomorphism $\sigma\colon G\rightarrow{Aut}(\P)$, where $(\P,\tau)$ is a tracial von Neumann algebra.
A {\it $1$-cocycle for $\sigma$} is a map $w\colon G\rightarrow\sU(\P)$ such that $w_{gh}=w_g\sigma_g(w_h)$, for every $g,h\in G$. Any character $\eta\colon G\rightarrow\mathbb T$ gives a (trivial) $1$-cocycle for $\sigma$.
Two cocycles $w,w'\colon G\rightarrow\sU(\P)$ are called {\it cohomologous} if there is $u\in\sU(\P)$ such that $w'_g=u^*w_g\sigma_g(u)$, for every $g\in G$.
Let $p\in \P$ be a projection. A {\it generalized $1$-cocycle for $\sigma$ with support projection $p$} is a map $w\colon G\rightarrow \P$ such that $w_gw_g^*=p,w_g^*w_g=\sigma_g(p)$ and $w_{gh}=w_g\sigma_g(w_h)$, for every $g,h\in G$.

\begin{ex}\label{exmpl} We continue by recording several examples of trace preserving actions.
\begin{enumerate}
\item[(a)] Let $G\curvearrowright I$ be an action on a  countable set $I$ and $(\P,\tau)$ a tracial von Neumann algebra.
The {\it generalized Bernoulli action} $G\curvearrowright^\sigma (\P^I,\tau)$ associated to $G\curvearrowright I$ is given by $\sigma_g(x)=\otimes_{i\in I}x_{g^{-1}\cdot i}$, for all $g\in G$ and $x=\otimes_{i\in I}x_i\in \P^I$ with $\{i\in I\mid x_i\not=1\}$ finite.
 If $i\in I$,  we let $\text{Stab}_G(i)$ be the  stabilizer of $i$ in $G$ and denote  $\P^{\{i\}}$ by $\P^i$.

\item[(b)] Let $K<G$ be a subgroup and $K\curvearrowright^{\sigma} (\P,\tau)$ be a trace preserving action.
Let $\varphi\colon G/K\rightarrow G$ such that $\varphi(h)K=h$, for every $h\in G/K$. Define $c\colon  G\times G/K\rightarrow K$ by $c(g,h)=\varphi(gh)^{-1}g\varphi(h)$, for $g\in G$ and $h\in G/ K$. For $h\in G/K$, let $\rho_h\colon \P\rightarrow \P^{G/K}$ be the embedding given by identifying $\P$ with $\P^{h}$. The {\it co-induced action} $G\curvearrowright^{\widetilde\sigma}\P^{G/K}$ is given by the formula $\widetilde\sigma_g(\rho_h(x))=\rho_{gh}(\sigma_{c(g,h)}(x))$, for all $g\in G,h\in G/K$ and $x\in \P$.


\item[(c)] Let $G\curvearrowright I$ be an action on a countable set $I$ and $(\P,\tau)$  a tracial von Neumann algebra. Following Krogager and Vaes \cite[Definition 2.5]{KV15}, we say that a trace preserving action $G\curvearrowright^{\sigma}(\P^I,\tau)$ is {\it built over $G\curvearrowright I$}  if it satisfies $\sigma_g(\P^i)=\P^{g\cdot i}$, for every $g\in G$ and $i\in I$. Let $J\subset I$ be a set which meets each $G$-orbit exactly once. For $i\in J$, note that $\sigma_g(\P^i)=\P^i$, for every $g\in\text{Stab}_G(i)$. Thus, we have a trace preserving action $\text{Stab}_G(i)\curvearrowright \P^i\cong \P$. We denote by $G\curvearrowright^{\sigma_i}\P^{G/\text{Stab}_G(i)}$ the co-induced action. Then, as explained right after \cite[Definition 2.5]{KV15}, $\sigma$ is conjugate to the product of co-induced actions  $\otimes_{i\in J}\sigma_i$.
\end{enumerate}
\end{ex}

The observation from \cite{KV15} recalled in Example \ref{exmpl} (c) implies the following.

 \begin{lem} \label{coind}
Let $A,B$ be countable groups and $B\curvearrowright I$ an action on a countable set $I$. Let $G\in\mathcal W\mathcal R(A,B\curvearrowright I)$, $\varepsilon\colon G\rightarrow B$ the quotient homomorphism  and  $(u_g)_{g\in G}$ the canonical generating unitaries of $\emph{L}(G)$.  Let  $G\curvearrowright I$ and $G\curvearrowright^{\sigma} \emph{L}(A^{(I)})=\emph{L}(A)^I$ be  the action  and  the trace preserving action given by $g\cdot i=\varepsilon(g) i$ and $\sigma_g=\emph{Ad}(u_g)$, for every $g\in G$ and $i\in I$.

Then $\sigma$ is built over $G\curvearrowright I$. Moreover, let $J\subset I$ be a set which meets each $G$-orbit exactly once. For $i\in J$, consider the trace preserving action  $\emph{Stab}_G(i)\curvearrowright^{\rho_i} \emph{L}(A_i)$ given by $(\rho_i)_g(u_h)=u_{ghg^{-1}}$, for every $g\in\emph{Stab}_G(i)$ and $h\in A_i$.
 Let $\sigma_i$ be the action of $G$ obtained by co-inducing $\rho_i$. Then $\sigma$ is conjugate to $\otimes_{i\in J}\sigma_i$.
 \end{lem}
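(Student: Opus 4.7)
The plan is to reduce the statement to the general observation about actions built over $G\curvearrowright I$ that is recorded (following \cite{KV15}) in Example \ref{exmpl}(c). Concretely, I would verify the two hypotheses required to apply that observation: first, that $\sigma$ is built over $G\curvearrowright I$, and second, that the induced action of each stabilizer on the corresponding fibre algebra coincides with $\rho_i$ on the nose. Once both are in place, the lemma is an immediate specialization of the Krogager--Vaes observation.

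For the first step I would unwind the definitions. By Definition \ref{wlp}, conjugation by $u_g$ in $\text{L}(G)$ implements on the base $A^{(I)}$ the automorphism $a\mapsto gag^{-1}$, which by the defining relation of a wreath-like product sends the direct summand $A_i$ onto $A_{\varepsilon(g)\cdot i}=A_{g\cdot i}$. Passing to group von Neumann algebras and using the identifications $\text{L}(A^{(I)})=\text{L}(A)^I$ and $\text{L}(A_i)=\text{L}(A)^i$, this gives
$$\sigma_g(\text{L}(A)^i)=u_g\text{L}(A_i)u_g^*=\text{L}(A_{g\cdot i})=\text{L}(A)^{g\cdot i}$$
for every $g\in G$ and $i\in I$, which is exactly the ``built over'' condition.

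For the second step I would fix $i\in J$ and restrict $\sigma$ to $\text{Stab}_G(i)$. For any $g\in\text{Stab}_G(i)$, the previous computation shows that $\sigma_g$ preserves $\text{L}(A_i)$, and on the canonical generating unitaries $u_h$ with $h\in A_i$ one has $\sigma_g(u_h)=u_gu_hu_g^*=u_{ghg^{-1}}$, which is precisely $(\rho_i)_g(u_h)$. Hence the restricted action equals $\rho_i$, and the co-induced action appearing in Example \ref{exmpl}(c) is literally the action $\sigma_i$ defined in the lemma. Invoking the Krogager--Vaes observation then yields the conjugacy $\sigma\cong\bigotimes_{i\in J}\sigma_i$.

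There is no serious obstacle here; the argument is essentially a bookkeeping exercise. The only point that requires genuine care is ensuring that the restriction of $\sigma$ to $\text{Stab}_G(i)$ lands inside $\text{L}(A_i)$ (so that speaking of $\rho_i$ makes sense), which is why verifying the ``built over'' property up front, via the wreath-like rule $gA_ig^{-1}=A_{\varepsilon(g)\cdot i}$, is the right first move.
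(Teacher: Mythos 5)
Your proposal is correct and follows essentially the same argument as the paper: verify the built-over condition from the wreath-like rule $gA_ig^{-1}=A_{\varepsilon(g)\cdot i}$, observe that the restriction of $\sigma$ to $\emph{Stab}_G(i)$ on $\text{L}(A_i)$ is exactly $\rho_i$, and conclude by the Krogager--Vaes observation recalled in Example \ref{exmpl}(c). The paper's proof is just a terser version of the same two-step bookkeeping.
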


\begin{proof}
Since $\sigma_g(\text{L}(A_i))=\text{L}(A_{g\cdot i})$, for every $g\in G$ and $i\in I$, $\sigma$  is built over $G\curvearrowright I$.
If $i\in J$ and $g\in\text{Stab}_G(i)$, the restriction of $\sigma_g$ to $\text{L}(A_i)$ is $(\rho_i)_g$, and the conclusion follows. \end{proof}

For further reference, we record the following consequence of Lemma \ref{coind}.

\begin{rem}\label{bernoulli}
Assume that $A$ is abelian. Then the conjugation action of $G$ on $A^{(I)}$ gives rise to an action $B=G/A^{(I)}\curvearrowright^{\alpha}\text{L}(A^{(I)})$. Explicitly, for $g\in B$, we have
$\alpha_g=\sigma_{\widehat{g}}$, where $\widehat{g}\in G$ is any element such that $\varepsilon(\widehat{g})=g$. Lemma \ref{coind} implies that $\alpha$ is built over $B\curvearrowright I$. Moreover, $\alpha$ is conjugate to $\otimes_{i\in J}\alpha_i$, where $\alpha_i$ is obtained by co-inducing the action $\text{Stab}_B(i)\curvearrowright^{\tau_i}\text{L}(A_i)$ given by $(\tau_i)_g=(\rho_i)_{\widehat{g}}$, for every $g\in\text{Stab}_B(i)$.
In particular, if $I=B$ endowed with the left multiplication action of $B$, then  $\sigma$ and $\alpha$ are conjugate to the generalized Bernoulli actions $G\curvearrowright \text{L}(A)^B$ and $B\curvearrowright\text{L}(A)^B$, respectively.
\end{rem}

  In the proof of Theorem \ref{superr}, we will use Lemma \ref{coind} in combination with the following extension of Popa's cocycle superrigidity theorems.


\begin{thm}\label{builtover} Let $G$ be a countable group with property (T), $G\curvearrowright I$ be an action on a countable set $I$ with infinite orbits and $(\P,\tau)$ be a tracial von Neumann algebra. Suppose that $G\curvearrowright^{\sigma}(\P^I,\tau)$ is a trace preserving action built over $G\curvearrowright I$.
Then the following hold.
\begin{enumerate}
\item[(a)] Any $1$-cocycle for $\sigma$ is cohomologous to a character of $G$. More generally, given a trace preserving action $G\curvearrowright^{\lambda}(\Q,\tau)$, any $1$-cocycle $w\colon G\rightarrow\sU(\P^I\,\overline{\otimes}\,\Q)$ for the product action $\sigma\otimes\lambda$ is cohomologous to a $1$-cocycle taking values into $\sU(\Q)\subset \sU(\P^I\,\overline{\otimes}\,\Q)$.
\item[(b)] Any generalized $1$-cocycle for $\sigma$ has support projection $1$.
\end{enumerate}
\end{thm}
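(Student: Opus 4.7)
The plan is to derive both parts from Popa's cocycle superrigidity theorem \cite{Po05, Po06b} for Bernoulli and co-induced actions of property (T) groups, adapted to the built-over setting. By Lemma \ref{coind}, $\sigma$ is conjugate to the tensor product $\otimes_{i \in J} \sigma_i$, where each $\sigma_i$ is co-induced from an action of $\mathrm{Stab}_G(i)$ on a copy of $\P$. The assumption that $G \curvearrowright I$ has infinite orbits ensures that each stabilizer has infinite index in $G$, whence each $\sigma_i$ is weakly mixing, and hence so are $\sigma$ and its tensor product with $\lambda$ along the $\P^I$-side.

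For part (a), the idea is to equip $\sigma$ with an s-malleable deformation in Popa's sense. Embed $\P$ into a tracial enlargement $\widetilde{\P}$ carrying a suitable automorphism structure (e.g.\ $\widetilde{\P} = \P \,\overline{\otimes}\, \text{L}(\mathbb Z)$, or its Gaussian analogue); extend $\sigma$ trivially on the new factor to an action $\widetilde\sigma$ on $\widetilde{\P}^I$ still built over $G \curvearrowright I$; and construct a continuous one-parameter family $(\widetilde\theta_t)_{t \in \mathbb R}$ of trace preserving automorphisms of $\widetilde{\P}^I \,\overline{\otimes}\, \widetilde{\P}^I$ commuting with $\widetilde\sigma \otimes \widetilde\sigma$, with $\widetilde\theta_0 = \mathrm{id}$ and $\widetilde\theta_1$ swapping the two tensor copies. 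Given a $1$-cocycle $w\colon G \to \sU(\P^I \,\overline{\otimes}\, \Q)$ for $\sigma \otimes \lambda$, view $w$ inside $\widetilde{\P}^I \,\overline{\otimes}\, \widetilde{\P}^I \,\overline{\otimes}\, \Q$ and set $w_g^t := \widetilde\theta_t(w_g)$. Property (T) of $G$ forces $w_g$ and $w_g^t$ to be uniformly close, and hence cohomologous via some unitary $u_t$, for $|t|$ small, via the standard transference of almost-invariant vectors; iterating up to $t = 1$ shows that $w$ is cohomologous to $\widetilde\theta_1(w)$, which takes values in the swapped copy of $\widetilde{\P}^I$. Weak mixing of $\widetilde\sigma$ then forces the resulting intertwining unitary to lie in $\sU(\Q)$, yielding the desired cocycle into $\sU(\Q)$; the special case $\Q = \mathbb C$ gives the character statement.

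For part (b), a generalized $1$-cocycle $w$ with support projection $p$ is completed, on a matrix amplification $\P^I \,\overline{\otimes}\, M_n(\mathbb C)$ equipped with a natural extension of $\sigma$, to an ordinary unitary-valued $1$-cocycle; applying part (a) to the extension and then using weak mixing of $\sigma$ (which rules out nontrivial $\sigma$-invariant projections in $\P^I$) forces $p = 1$. The principal technical obstacle will be constructing the s-malleable deformation $(\widetilde\theta_t)$ for an arbitrary tracial $\P$ while preserving the built-over structure, and carrying out the transference argument uniformly in $t$; both steps follow the lines of \cite{Po05, Po06b}, with the product decomposition of Example \ref{exmpl}(c) enabling the passage from co-induced to built-over actions.
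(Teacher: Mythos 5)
Your overall strategy (s-malleable deformation plus property (T) transference plus weak mixing untwisting) is indeed the machinery underlying this theorem, but the two places where your sketch departs from boilerplate are exactly where it breaks. First, the deformation you propose does not exist as described. With the tensor enlargement $\widetilde\P=\P\,\overline{\otimes}\,\text{L}(\mathbb Z)$ the auxiliary Haar unitary commutes with everything in sight, so inner perturbations coming from the new factor act trivially on $\P$, and an abstract path $(\widetilde\theta_t)$ from the identity to the flip of $\widetilde\P\,\overline{\otimes}\,\widetilde\P$ would have to be chosen coordinatewise \emph{and} commute with the diagonal within-coordinate action of every stabilizer $\emph{Stab}_G(i)$ (the action $\sigma$ is only built over $G\curvearrowright I$, so the stabilizers act nontrivially inside each leg); nothing in your construction guarantees such equivariance. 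The Gaussian variant only makes sense when $\P$ is abelian and arises from a real Hilbert space, not for an arbitrary tracial $(\P,\tau)$. The construction that actually works is the free malleable deformation, $\widetilde\P=\P\ast\text{L}(\mathbb Z)$ with $\theta_t=\otimes_i\emph{Ad}(u_t)$ for $u_t$ in the free factor: its implementing unitaries are fixed by the extended action, so commutation with any built-over action is automatic. Since you yourself identify the deformation as the principal obstacle, this is a genuine gap rather than a suppressed routine detail.

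Second, your reduction of (b) to (a) by completing a generalized $1$-cocycle to a unitary cocycle on $\P^I\,\overline{\otimes}\,\mathbb M_n(\mathbb C)$ cannot work: by ergodicity of $\sigma$ the $(\sigma\otimes\text{Id})$-invariant projections of $\P^I\,\overline{\otimes}\,\mathbb M_n(\mathbb C)$ lie in $1\otimes\mathbb M_n(\mathbb C)$ and have integer trace, so when $\tau(p)\notin\mathbb Z$ there is no equivariant corner in which the partial isometries $w_g$ can be filled out to unitaries (the complement would itself have to carry a generalized cocycle with support $1-p$, which is circular); passing to an infinite amplification produces a unitary cocycle but in a semifinite algebra, outside the scope of (a). The standard proofs run the deformation/rigidity argument directly on the generalized cocycle, and that is also how the paper proceeds: it does not rebuild the deformation theory at all, but quotes \cite[Theorem 3.1]{Dr15} for the case of finitely many orbits (via the co-induced decomposition of Lemma \ref{coind}) and observes that the proof of \cite[Theorem 7.1]{VV14} — whose Step 2 holds with $\Sigma=G$ precisely because $G$ has property (T), and whose remaining steps use only infinite orbits — carries over verbatim from generalized Bernoulli actions to actions built over $G\curvearrowright I$, as in \cite[Theorem 2.6]{KV15}; both (a) and (b) come out of that adaptation. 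If you want a self-contained argument along your lines, you would need to set up the free malleable deformation in the built-over setting and treat generalized cocycles directly, as in \cite{Po05} and \cite{IPV10}.
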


Theorem \ref{builtover} extends results of Popa in \cite{Po01a,Po05} which cover Connes-St{\o}rmer and classical Bernoulli actions.
If $G\curvearrowright I$ has finitely many orbits, part (a) is a consequence of \cite[Theorem 3.1]{Dr15}. In general, Theorem \ref{builtover} follows by adapting the proof of \cite[Theorem 7.1]{VV14}.
We explain this briefly below, leaving the details to the reader.

 \begin{proof}
 Assume first that the action $G\curvearrowright I$ has finitely many orbits.  By \cite{KV15} (see Example \ref{exmpl} (c)), $\sigma$ is a product of finitely many co-induced actions. Since $G$ has property (T) and $G\curvearrowright I$ has infinite orbits, part (a) follows from \cite[Theorem 3.1]{Dr15}.

In general, adapting the proof of \cite[Theorem 7.1]{VV14} shows that Theorem \ref{builtover} holds if $\sigma$ is the generalized Bernoulli action $G\curvearrowright(\P^I,\tau)$ associated to the action $G\curvearrowright I$.
 To see this, assume the notation from \cite[Theorem 7.1]{VV14}.
Since $G$ has property (T), Step 2 in the proof of \cite[Theorem 7.1]{VV14} holds for $\Sigma=G$. Then Steps 3-6 in that proof, which only use Step 2 and that $G\curvearrowright I$ has infinite orbits, also hold for $\Sigma=G$. This justifies our claim.
Similarly to \cite[Theorem 2.6]{KV15}, the above proof can be reproduced verbatim to get the conclusion under the more general assumption that $\sigma$ is built over $G\curvearrowright I$.
 \end{proof}

\subsection{Cartan subalgebras and equivalence relations}

In this subsection, we first recall the connection between Cartan subalgebras and countable equivalence relations, and then record two conjugacy results for Cartan subalgebras. 

 If $G\curvearrowright (X,\mu)$ is a p.m.p. action of a countable group $G$, then its {\it orbit equivalence (OE) relation} $\sR(G\curvearrowright X)=\{(x_1,x_2)\in X^2\mid G\cdot x_1=G\cdot x_2\}$ is countable p.m.p. Conversely, every countable p.m.p. equivalence relation $\sR$ on $(X,\mu)$ arises this way \cite[Theorem 1]{FM77a}.
The {\it full group} of  $\sR$, denoted by $[\sR]$, consists of  all automorphisms $\theta$ of $(X,\mu)$ such that $(\theta(x),x)\in\sR$, for almost every $x\in X$. For a 2-cocycle $c\in\text{Z}^2(\sR,\mathbb T)$, we denote by $\text{L}_c(\sR)$ the tracial von Neumann algebra associated to $\sR$ and $c$ \cite[Section 2]{FM77b}. It is generated by a copy of $\text{L}^{\infty}(X)$ and unitaries $(u_{\theta})_{\theta\in [\sR]}$ such that $u_{\theta}au_{\theta}^*=a\circ\theta^{-1}$, for every $a\in \text{L}^{\infty}(X)$ and $\theta\in [\sR]$. When $c\equiv 1$ is the trivial $2$-cocycle, we use the notation $\text{L}(\sR)$.

Let $\M$ be a II$_1$ factor and $\A\subset \M$ be a Cartan subalgebra. Identify $\A=\text{L}^{\infty}(X)$, for a standard probability space $(X,\mu)$. For $u\in\sN_\M(\A)$, let $\theta_u$ be a measure space automorphism of $(X,\mu)$ such that $uau^*=a\circ\theta_u^{-1}$, for every $a\in A$. 
The equivalence relation of the inclusion $\A\subset \M$, denoted $\sR:=\sR(\A\subset \M)$, is the smallest countable p.m.p. equivalence relation on $(X,\mu)$ such that $\theta_u\in [\sR]$, for every $u\in\sN_\M(\A)$.
Then there is a $2$-cocycle $c\in\text{Z}^2(\sR,\mathbb T)$ such that the inclusion $(\A\subset \M)$ is isomorphic to $(\text{L}^{\infty}(X)\subset \text{L}_c(\sR))$ \cite[Theorem 1]{FM77b}.

The following two lemmas are extracted from the proofs of Theorems 6.1 and 8.2 in \cite{Io10}, respectively. However, for the reader's convenience, we include detailed proofs.

\begin{lem}[\cite{Io10}]\label{conj1}
Let $\M$ be a II$_1$ factor, $\A\subset \M$ be a Cartan subalgebra and $\D\subset \M$ be an abelian von Neumann subalgebra. Let $\C=\D'\cap \M$ and assume that $\C\prec_{\M}^{s}\A$. Then there exists $u\in\sU(\M)$ such that $\D\subset u\A u^*\subset \C$.
\end{lem}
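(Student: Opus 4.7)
The plan is to construct a unitary $u\in\sU(\M)$ with $u\A u^{*}\subset\C$; the inclusion $\D\subset u\A u^{*}$ will then follow automatically. Indeed, since $\D$ is abelian and $\D\subset\C=\D'\cap\M$, we have $\D\subset\sZ(\C)$. Once $u\A u^{*}\subset\C$ is achieved, $u\A u^{*}$ is a MASA of $\M$ (as $\A$ is Cartan), so every element of $\D\subset\sZ(\C)$, commuting with all of $u\A u^{*}\subset\C$, must lie in $u\A u^{*}$ by maximality.

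To build $u$, I would first apply Theorem \ref{corner} to the intertwining $\C\prec_{\M}\A$. Taking $q=1$ (as $\A$ is unital), characterization (c) yields a nonzero positive element $a\in\C'\cap\langle\M,e_{\A}\rangle$ with $\emph{Tr}(a)<\infty$. Passing to a suitable spectral projection and invoking the standard structure theory of finite-trace projections in the basic construction over a Cartan subalgebra produces a nonzero partial isometry $v\in\M$ with $v^{*}v\in\A$, $vv^{*}\in\C$, and $v(v^{*}v)\A(v^{*}v)v^{*}\subset\C$. Such a $v$ partially conjugates $\A$ into $\C$.

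Next, a Zorn's lemma argument picks a maximal family $\{v_{i}\}_{i\in I}$ of partial isometries having these properties with $\{v_{i}^{*}v_{i}\}\subset\A$ and $\{v_{i}v_{i}^{*}\}\subset\C$ pairwise orthogonal; set $e=\sum_{i}v_{i}^{*}v_{i}\in\A$ and $f=\sum_{i}v_{i}v_{i}^{*}\in\C$. The heart of the argument is showing $e=f=1$: if $1-f\neq 0$, its central support $z\in\sZ(\C)$ is nonzero, and applying $\C\prec_{\M}^{s}\A$ to $\C z$ and reversing the direction as in the previous step produces a new partial isometry contradicting maximality. Once $e=f=1$, the sum $u:=\sum_{i}v_{i}$ converges in the strong operator topology to a unitary $u\in\sU(\M)$. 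For $a\in\A$, since $v_{i}^{*}v_{i}\in\A$ commutes with $a$ and the projections $v_{i}^{*}v_{i}$ are pairwise orthogonal, the cross terms $v_{i}av_{j}^{*}$ vanish for $i\neq j$, and $uau^{*}=\sum_{i}v_{i}av_{i}^{*}\in\C$ by construction, yielding $u\A u^{*}\subset\C$.

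The main obstacle is the maximality step, i.e.\ concluding $e=f=1$. This is precisely where the \emph{strong} form of intertwining (as opposed to the bare $\C\prec_{\M}\A$) is essential, since it rules out any nonzero residual projection in $\sZ(\C)$ being skipped by the family. A secondary technical issue, handled using the regularity of the Cartan subalgebra $\A$ (that its normalizer generates $\M$), is ensuring that the left and right supports of the initial partial isometry can be arranged to lie in $\A$ and $\C$ respectively.
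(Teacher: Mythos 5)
Your reduction (find $u$ with $u\A u^*\subset\C$, then use that $u\A u^*$ is maximal abelian and $\D\subset\sZ(\C)$) is fine, and patching partial isometries into a unitary is the right general scheme; but the first, crucial step has a genuine gap. From $\C\prec_{\M}\A$ you only get intertwining in the direction ``a corner of $\C$ embeds into $\A$'': a finite-trace positive element of $\C'\cap\langle\M,e_{\A}\rangle$, equivalently a $*$-homomorphism $\theta\colon p_0\C p_0\to q_0\A q_0$ with $\theta(x)v_0=v_0x$. There is no ``standard structure theory'' that converts this into a nonzero partial isometry $v$ with $v^*v\in\A$, $vv^*\in\C$ and $v\A v^*\subset\C$: that is the \emph{reverse} direction, and the known upgrade from intertwining to spatial conjugacy of corners (\cite[Theorem A.1]{Po01b}, see also \cite[Lemma C.3]{Va06}) requires \emph{both} algebras to be maximal abelian, whereas $\C=\D'\cap\M$ is in general nonabelian (in this paper's application it is of type I$_k$). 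Your ``reversing the direction'' in the maximality step suffers from the same problem. Note also that your construction never really uses the hypothesis that $\C$ is the relative commutant of an abelian algebra; some such input is indispensable, because the existence of your $v$ forces $\C$ to contain a maximal abelian subalgebra of $\M$, and this does not follow from $\C\prec^{\text{s}}_{\M}\A$ alone.

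The missing idea, which is exactly how the paper proceeds, is to first replace $\C$ by a maximal abelian subalgebra $\C_0\subset\C$: since $\D\subset\sZ(\C)\subset\C_0$, one gets $\C_0'\cap\M\subset\D'\cap\M=\C$, hence $\C_0'\cap\M=\C_0$, i.e.\ $\C_0$ is maximal abelian in $\M$ and contains $\D$. From $\C\prec^{\text{s}}_{\M}\A$ one deduces $\C_0p\prec_{\M}\A$ for every nonzero projection $p\in\C_0$, and now the MASA-versus-MASA conjugacy theorem \cite[Theorem A.1]{Po01b} applies, giving nonzero projections $p'\in\C_0p$, $q\in\A$ and a partial isometry $v$ with $vv^*=p'$, $v^*v=q$ and $\C_0p'=v(\A q)v^*$. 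Since $\A$ is Cartan and $\M$ is a II$_1$ factor, $q$ can be replaced by any projection of $\A$ with the same trace; this is what makes the exhaustion argument work (new pieces can be chosen orthogonal to the ones already constructed) and yields a unitary $u$ with $u\A u^*=\C_0$, whence $\D\subset u\A u^*\subset\C$. In short, your outline becomes correct once the target of the conjugation is the MASA $\C_0$ rather than $\C$ itself; as written, the step producing the initial partial isometry (and each subsequent one) is unjustified.
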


\begin{proof}
Let $\C_0\subset \C$ be a maximal abelian von Neumann subalgebra. Then $\C_0$ contains $\sZ(\C)$ and hence $\D$. Thus, $\C_0'\cap \M\subset \D'\cap \M=\C$, and hence $\C_0$ is maximal abelian in $\M$.

Let $p\in \C_0$ be a nonzero projection.
Since $\C\prec_{\M}^{\text s}\A$, we get that $\C_0p\prec_{\M}\A$. Since $\C_0,\A\subset \M$ are maximal abelian, \cite[Theorem A.1]{Po01b} (see also \cite[Lemma C.3]{Va06})
provides nonzero projections $p'\in \C_0p$ and $q\in \A$ such that  $\C_0p'=v(\A q)v^*$, for a partial isometry $v\in \M$ satisfying $vv^*=p'$ and $v^*v=q$. Moreover, since $\A\subset \M$ is a Cartan subalgebra and $\M$ is a II$_1$ factor, the same holds if $q$ is replaced by any projection $q'\in \A$ with $\tau(q')=\tau(q)$.

By using this fact and a maximality argument, we can find projections $(p_i)_{i\in I}\subset \C_0$, $(q_i)_{i\in I}\subset \A$ and partial isometries $(v_i)_{i\in I}\subset \M$ such that we have $\sum_{i\in I}p_i=\sum_{i=I}q_i=1$ and $C_0p_i=v_i(\A q_i)v_i^*, v_iv_i^*=p_i,v_i^*v_i=q_i$, for every $i\in I$. It follows that $u=\sum_{i\in I}v_i$ is a unitary in $\M$ such that $\C_0=u\A u^*$. Thus, $\D\subset u\A u^*\subset \C$, which proves the conclusion. \end{proof}

\begin{lem}[\cite{Io10}]\label{conj2}
Let $\M$ be a II$_1$ factor, $\A\subset \M$ a Cartan subalgebra, $\D\subset \M$ an abelian von Neumann subalgebra and let $\C=\D'\cap \M$. Assume that $\C\prec_{\M}^{\text{s}}\A$ and $\D\subset \A\subset \C$.
Let $(\alpha_g)_{g\in G}$ be an action of a group $G$ on $\C$ such that $\alpha_g=\emph{Ad}(u_g)$, for some $u_g\in\sN_\M(\D)$, for every $g\in G$.
 Assume that the restriction of the action $(\alpha_g)_{g\in G}$  to $\D$ is free.

Then there is an action $(\beta_g)_{g\in G}$ of $G$ on $\C$ such that
\begin{enumerate}[label={(\alph*)}, ref=(\alph*)]
\item  \label{unu} for every $g\in G$ we have that $\beta_g=\alpha_g\circ\emph{Ad}(\omega_g)=\emph{Ad}(u_g\omega_g)$, for some $\omega_g\in\sU(\C)$, and
\item \label{doi}$\A$ is $(\beta_g)_{g\in G}$-invariant and the restriction of
 $(\beta_g)_{g\in G}$ to $\A$ is free.
\end{enumerate}
Moreover, if the action $(\alpha_g)_{g\in G}$ on $\C$ is weakly mixing, then we can find
 $(\beta_g)_{g\in G}$-invariant projections $p_1,...,p_k\in\A$ with
 $\sum_{j=1}^kp_i=1$, for some $k\in\mathbb N$, such that the restriction of $(\beta_g)_{g\in G}$ to $\A p_j$ is weakly mixing, for every $1\leq j\leq k$.

\end{lem}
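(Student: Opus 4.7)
The plan is to use the strong intertwining $\C\prec^s_\M\A$ together with $\A\subset\C$ to show that any maximal abelian subalgebra of $\C$ containing $\D$ is unitarily conjugate to $\A$ by a unitary of $\C$. Granted this, for each $g\in G$ the algebra $\alpha_g^{-1}(\A)$---another maximal abelian subalgebra of $\C$ containing $\D=\alpha_g^{-1}(\D)$, since $u_g$ normalizes $\D$ and hence $\C=\D'\cap\M$---is of the form $\omega_g\A\omega_g^*$ for some $\omega_g\in\sU(\C)$. Setting $\beta_g=\alpha_g\circ\text{Ad}(\omega_g)=\text{Ad}(u_g\omega_g)$ then yields an automorphism of $\C$ preserving $\A$, verifying \ref{unu} and the invariance half of \ref{doi}.

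The conjugacy assertion is obtained by a cutting-and-reassembling argument in the spirit of Lemma~\ref{conj1}: the $\prec^s$-intertwining produces, for any nonzero projection in the given maximal abelian subalgebra, a nonzero partial isometry intertwining a corner with a corner of $\A$; by a maximality argument these can be patched into a single unitary, and the hypothesis that $\D$ sits inside both maximal abelian subalgebras forces the unitary to commute with $\D$, hence to lie in $\C=\D'\cap\M$. Freeness of $\beta|_\A$ is then automatic: since $\omega_g\in\C$ commutes with $\D$, one has $\beta_g|_\D=\alpha_g|_\D$, which is free, and $\beta_g$ on $\text{Spec}(\A)$ fibers over $\beta_g|_\D$ on $\text{Spec}(\D)$, so $g\cdot y\ne y$ for a.e.\ $y$ forces $g\cdot x\ne x$ for a.e.\ $x$ lying over such a $y$.

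The principal obstacle is guaranteeing that the $\omega_g$'s can be chosen so that $(\beta_g)_{g\in G}$ is a genuine group action, not merely a projective one. Each $\omega_g$ is determined only modulo right multiplication by $\sN_\C(\A)$, and the identity $\beta_{gh}=\beta_g\beta_h$ translates---using $u_h^{-1}\omega_g u_h=\alpha_h^{-1}(\omega_g)$ and the triviality of $\sZ(\M)$---into the twisted cocycle requirement $\omega_{gh}\in\mathbb{T}\cdot\alpha_h^{-1}(\omega_g)\omega_h$. A measurable selection within the $\sN_\C(\A)$-cosets, exploiting freeness of $\alpha|_\D$ and the Feldman--Moore description of $\C$ in terms of the equivalence subrelation of $\sR(\A\subset\M)$ corresponding to $\D$, arranges this.

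For the weak mixing addendum, let $\A_0\subset\A$ denote the von Neumann subalgebra generated by the finite-dimensional $\beta$-invariant subspaces of $L^2(\A)$; it is a $\beta$-invariant subalgebra of $\A$. Any such subspace sits inside $L^2(\C)$, and untwisting $\beta$ by the inner perturbations $\text{Ad}(\omega_g)\in\text{Inn}(\C)$ together with weak mixing of $\alpha$ on $\C$ forces $\A_0$ to be finite dimensional; otherwise one would produce a non-trivial finite-dimensional $\alpha$-invariant subspace in $L^2(\C)$, contradicting weak mixing. Its minimal projections $p_1,\ldots,p_k$ are $\beta$-invariant and sum to $1$, and on each $\A p_j$ the action $\beta$ has, by construction, no non-trivial finite-dimensional invariant subspace, hence is weakly mixing.
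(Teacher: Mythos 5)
The central gap is the group law for $(\beta_g)_{g\in G}$. You choose, for each $g$ separately, a unitary $\omega_g\in\sU(\C)$ conjugating $\A$ onto $\alpha_g^{-1}(\A)$; this produces automorphisms $\beta_g$ preserving $\A$, but gives no reason why $g\mapsto\beta_g$ is multiplicative, and you yourself flag this as the principal obstacle. Your proposed fix---``a measurable selection within the $\sN_\C(\A)$-cosets, exploiting freeness of $\alpha|_\D$ and Feldman--Moore''---is not an argument: what is needed is a choice of $\omega_g$ satisfying an exact (twisted) cocycle identity with values in the normalizer of $\A$ in $\C$, and nothing in the sketch explains why such a selection exists. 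The paper sidesteps this entirely by using that $\C\prec^{\text s}_\M\A$ forces $\C$ to be type I: writing $\C=\bigoplus_{i}(\sZ_i\,\overline{\otimes}\,\mathbb M_{k_i}(\mathbb C))$ and, after a unitary conjugation, $\A=\bigoplus_i(\sZ_i\,\overline{\otimes}\,\mathbb D_{k_i}(\mathbb C))$, it \emph{defines} $\beta_g=\bigoplus_i({\alpha_g}|_{\sZ_i}\otimes\mathrm{Id}_{\mathbb M_{k_i}(\mathbb C)})$, which is manifestly an action preserving $\A$ (since $\alpha$ restricted to the center is an action), and only afterwards obtains $\omega_g$ from \cite[Corollary 9.3.5]{KR86}, because $\alpha_g$ and $\beta_g$ agree on $\sZ(\C)$. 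A secondary error in your first part: a unitary of $\M$ carrying one masa containing $\D$ onto another containing $\D$ need \emph{not} commute with $\D$ (take both masas equal to $\A$ and any $u\in\sN_\M(\A)$), so your route to $\omega_g\in\C$ fails as stated; the correct observation is that $\D\subset\sZ(\C)$ automatically, and $\A$, $\alpha_g^{-1}(\A)$ are two masas of the type I algebra $\C$, hence conjugate by a unitary of $\C$ (cf.\ \cite[Lemma C.2]{Va06}).

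The weak mixing addendum is also not justified. A finite-dimensional $\beta$-invariant subspace of $\text{L}^2(\A)$ does not yield an $\alpha$-invariant one: $\beta$ differs from $\alpha$ by inner perturbations, and these do not preserve almost periodic parts. Indeed, in the paper's model $\C=\sZ\,\overline{\otimes}\,\mathbb M_k(\mathbb C)$, $\A=\sZ\,\overline{\otimes}\,\mathbb D_k(\mathbb C)$, $\beta=\alpha|_{\sZ}\otimes\mathrm{Id}$, the restriction $\beta|_\A$ has the $k$-dimensional fixed space spanned by $p_j=1\otimes q_j$ even though $\alpha$ is weakly mixing on $\C$; so any transfer of invariant subspaces from $\beta$ to $\alpha$ of the kind you invoke must fail whenever $k>1$. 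Moreover, even granting that your almost periodic subalgebra $\A_0$ is finite dimensional, you would still have to show its minimal projections are \emph{fixed} by $\beta$ rather than permuted, since the lemma requires $(\beta_g)$-invariant $p_j$ with weakly mixing restrictions. Both points are immediate in the paper's proof from the explicit form of $\beta$: weak mixing of $\alpha$ forces $\C$ to be homogeneous of type I$_k$ with $k$ finite, the $p_j$ are fixed by construction, and $\beta|_{\A p_j}\cong\alpha|_{\sZ}$ is weakly mixing. As written, your argument establishes neither the finite dimensionality of $\A_0$ nor the invariance of its minimal projections.
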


\begin{rem} Assume the notation of Lemma \ref{conj2}.
If ${(u_g)}_{g\in G}\subset\sN_\M(\D)$ are such that  $u_{gh}^*u_gu_h\in\D$, for every $g,h\in G$, then $(\text{Ad}(u_g))_{g\in G}$ defines an action of $G$ on $\C$.
\end{rem}

For $k\in\mathbb N$, we denote by $\mathbb D_k(\mathbb C)\subset\mathbb M_k(\mathbb C)$ the subalgebra of diagonal matrices.

\begin{proof}
Since $\C\prec_{\M}^{\text{s}}\A$, $\C$ is a type I algebra. We can thus decompose $\sZ(\C)=\bigoplus_{i\geq 1}\sZ_i$ such that $\C=\bigoplus_{i\geq 1}(\sZ_i\,\overline{\otimes}\,\mathbb M_{k_i}(\mathbb C))$ for a strictly increasing, possibly finite, sequence $(k_i)\subset\mathbb N$.
Since
any two maximal abelian subalgebras of a type I algebra are unitarily conjugate (see, e.g., \cite[Lemma C.2]{Va06}) we may assume that $\A=\bigoplus_{i\geq 1}(\sZ_i\,\overline{\otimes}\,\mathbb D_{k_i}(\mathbb C))$.

Since the action $(\alpha_g)_{g\in G}$  on $\C$ leaves $\sZ_i$ invariant, for every $i$, we can define a new action $(\beta_g)_{g\in G}$ on $\C$ by letting $$\text{$\beta_g=\bigoplus_{i\geq 1}({\alpha_g}_{|\sZ_i}\otimes\text{Id}_{\mathbb M_{k_i}(\mathbb C)})$, for every $g\in G$.}$$
If $g\in G$, then since the automorphisms $\alpha_g$ and $\beta_g$ of $\C$ are equal on its center, $\sZ$, by \cite[Corollary 9.3.5]{KR86} we can find $\omega_g\in \sU(\C)$ such that $\beta_g=\alpha_g\circ\text{Ad}(\omega_g)$,  which proves \ref{unu}.

To prove \ref{doi}, note first that $(\beta_g)_{g\in G}$ leaves $\A$ invariant.
Second, let $g\in G$  such that $\beta_g(x)=x$, for every $x\in \A p$, for some nonzero projection $p\in \A$. We may assume that $p=z\otimes q$, where $z\in \sZ_i$ is a nonzero projection and $q\in\mathbb D_{k_i}(\mathbb C)$ is a minimal projection, for some $i$.
Then $\alpha_g(x)=x$, for every $x\in\sZ_iz$. If $r\in \D$ denotes the support of $E_\D(z)$, then as $\D\subset \sZ$ and $\alpha_g(\D)=\D$ by projecting onto $\D$ we get that $\alpha_g(x)=x$, for every $x\in \D r$. Since $r\not=0$ and the restriction of $(\alpha_g)_{g\in\Gamma}$ to $\D$ is free, we get that $g=e$.
Thus, the restriction of $(\beta_g)_{g\in G}$ to $\A$ is free.

To prove the moreover assertion, assume that the action $(\alpha_g)_{g\in G}$ on $\C$ is weakly mixing. Then $\C$ is of type I$_k$, for some $k\in\mathbb N$, so we can write $\C=\sZ\,\overline{\otimes}\,\mathbb M_k(\mathbb C)$ and $\A=\sZ\,\overline{\otimes}\,\mathbb D_k(\mathbb C)$.
Let $q_1,\cdots, q_k$ be the minimal projections of $\mathbb D_k(\mathbb C)$.
Then ${p_j=1\otimes q_j}\in \A$ is $(\beta_g)_{g\in G}$-invariant, for every $1\leq j\leq k$. Since the restriction of $(\alpha_g)_{g\in G}$ to $\sZ$ is weakly mixing, so is the restriction of $(\beta_g)_{g\in G}$ to $\A p_j=\sZ\otimes q_j$. This finishes the proof.
\end{proof}

\subsection{An intertwining result for property (T) subalgebras}
We end this section by using Popa and Vaes' structure theorem for normalizers in crossed products arising from actions of hyperbolic {groups} \cite{PV12}  to establish the following result.

\begin{thm}\label{relativeT}
Let $G,H$ be countable groups and $\delta\colon G\rightarrow H$ a homomorphism, where $H$ is hyperbolic. Let $G\curvearrowright (\Q,\tau)$ be a trace preserving action on a tracial von Neumann algebra  $(\Q,\tau)$ and $\M=\Q\rtimes G$.
 Let $\P\subset p\M p$ be a von Neumann subalgebra which is amenable relative to $\Q\rtimes\ker(\delta)$. 
 Let $\Nn=\sN_{p\M p}(\P)''$ and assume there is a von Neumann subalgebra $\R\subset \Nn$ with the relative property (T) such that $\R\nprec_{\M}\Q\rtimes\ker(\delta)$.
 Then  $\P\prec_{\M}^{\text s}\Q\rtimes\ker(\delta)$.
 \end{thm}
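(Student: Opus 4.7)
The plan is to combine the Popa--Vaes structure theorem for normalizers in crossed products by hyperbolic groups \cite{PV12} with the classical incompatibility between relative property (T) and relative amenability \cite{Po01b}. Set $K=\ker(\delta)$ and, for brevity, write $\M_0=\Q\rtimes K$.

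The first ingredient I will invoke is the following dichotomy, a natural extension of the main result of \cite{PV12} to crossed products whose acting group admits a homomorphism $\delta$ onto a hyperbolic group: any von Neumann subalgebra $\P_0\subset p_0\M p_0$ that is amenable relative to $\M_0$ inside $\M$ satisfies \emph{either} $\P_0\prec_\M \M_0$, \emph{or} $\sN_{p_0\M p_0}(\P_0)''$ is amenable relative to $\M_0$ inside $\M$. The reduction to the hyperbolic-group case proceeds by viewing $\M=(\Q\rtimes K)\rtimes(G/K)$ and dilating to a crossed product by $H$ via co-induction along $G/K\hookrightarrow H$, to which \cite{PV12} applies.

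To upgrade the target conclusion from $\prec_\M$ to $\prec_\M^{\text s}$, I take $z$ to be the supremum of all projections $p_0\in\P'\cap p\M p$ satisfying $\P p_0\prec_\M \M_0$. Standard convexity yields $\P z\prec_\M \M_0$, and stability of intertwining under unitary conjugation gives $uzu^*\le z$ for every $u\in\sN_{p\M p}(\P)$; hence $z$ is fixed by the normalizer and commutes with $\Nn$ and with $\R\subset\Nn$. Assuming for contradiction that $p-z\ne 0$, I apply the dichotomy to $\P(p-z)\subset(p-z)\M(p-z)$: it is amenable relative to $\M_0$, but by maximality of $z$ it cannot satisfy $\P(p-z)\prec_\M\M_0$, so $\sN_{(p-z)\M(p-z)}(\P(p-z))''$ must be amenable relative to $\M_0$ inside $\M$.

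Since $p-z$ commutes with $\Nn$, the cut-down $\Nn(p-z)$ is contained in $\sN_{(p-z)\M(p-z)}(\P(p-z))''$, and consequently $\R(p-z)\subset\Nn(p-z)$ is amenable relative to $\M_0$ inside $\M$. Because $p-z\in\R'\cap p\M p$, the inclusion $\R(p-z)\subset(p-z)\M(p-z)$ inherits relative property (T). Popa's classical argument \cite{Po01b}---an almost-invariant trace-like sequence in $\mathrm{L}^2(\langle \M,e_{\M_0}\rangle)$ combined with relative property (T) yields a genuinely $\R(p-z)$-invariant vector---then gives $\R(p-z)\prec_\M\M_0$, and hence $\R\prec_\M \M_0$, contradicting our hypothesis. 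Thus $p-z=0$ and $\P\prec_\M^{\text s}\M_0$. The main obstacle is the first step: making the Popa--Vaes dichotomy available with base algebra $\Q\rtimes\ker(\delta)$ rather than $\Q$, and with hyperbolicity coming only from the quotient $H$; once this is in place, the rest is a clean maximal-projection argument.
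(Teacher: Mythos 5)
Your second half---the maximal projection $z$, the observation that $z$ is normalizer-invariant so that $p-z\in\Nn'\cap p\M p$, and the standard ``relative property (T) versus relative amenability'' contradiction yielding $\R(p-z)\prec_\M\Q\rtimes\ker(\delta)$---is correct and is essentially the same endgame as the paper's proof (which produces the projection $z\in\Nn'\cap p\M p$ directly from \cite[Lemma 2.4(2)]{DHI16} and runs the same Popa-type argument). The problem is your first ingredient, which is exactly where the content of the theorem lies and which you do not actually establish. The dichotomy you invoke is not available off the shelf: \cite[Theorem 1.4]{PV12} requires a crossed product by a hyperbolic (or weakly amenable, biexact) group, whereas here the base is $\Q\rtimes\ker(\delta)$ and the only hyperbolicity is in the target $H$ of $\delta$. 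Your proposed reduction has two concrete defects. First, the identity $\M=(\Q\rtimes\ker(\delta))\rtimes (G/\ker(\delta))$ is false in general: the extension $1\to\ker(\delta)\to G\to G/\ker(\delta)\to 1$ need not split, so $G/\ker(\delta)$ does not act on $\Q\rtimes\ker(\delta)$ and at best one gets a cocycle crossed product, for which the co-induction step and the applicability of \cite{PV12} must be argued separately. Second, even granting that, ``dilating via co-induction along $G/\ker(\delta)\hookrightarrow H$'' is only the beginning: one must verify that relative amenability of $\P$ over $\Q\rtimes\ker(\delta)$ passes to the dilated algebra, and---more seriously---that non-intertwining of $\P(p-z)$ and of $\R$ into $\Q\rtimes\ker(\delta)$ upstairs can be pulled back downstairs. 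These transfer statements are precisely the technical heart of the argument, and they are not ``natural extensions'' one can wave at.

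The paper resolves this by a different, cleaner device: the comultiplication $\Delta\colon\M\to\M\,\overline{\otimes}\,\text{L}(H)$ of \cite{CIK13}, $\Delta(xu_g)=xu_g\otimes v_{\delta(g)}$, viewing $\M\,\overline{\otimes}\,\text{L}(H)=\M\rtimes H$ for the \emph{trivial} action of the genuinely hyperbolic group $H$. Then $\Delta(\P)$ is amenable relative to $\Delta(\Q\rtimes\ker(\delta))=\Q\,\overline{\otimes}\,1\subset\M\,\overline{\otimes}\,1$, so \cite[Theorem 1.4]{PV12} applies verbatim upstairs, and the transfer back is exactly \cite[Proposition 3.4]{CIK13}: $\Delta(\Ss)\prec_{\M\,\overline{\otimes}\,\text{L}(H)}\M\,\overline{\otimes}\,\text{L}(\Sigma)$ implies $\Ss\prec_\M\Q\rtimes\delta^{-1}(\Sigma)$ (used with $\Sigma=\{1\}$). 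If you want to complete your write-up, you should either import this comultiplication trick or supply full proofs of the cocycle/co-induction dilation and of both transfer statements; as it stands, the step you yourself flag as ``the main obstacle'' is a genuine gap.
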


\begin{proof}  Let $(u_g)_{g\in G}\subset\sU(\M)$ and $(v_h)_{h\in H}\subset\sU(\text{L}(H))$ be the canonical unitaries. Following \cite[Section 3]{CIK13}, define $\Delta\colon \M\rightarrow \M\,\overline{\otimes}\,\text{L}(H)$ by letting $\Delta(xu_g)=xu_g\otimes v_{\delta(g)}$, for every $x\in \Q$ and $g\in G$.
Write $\M\,\overline{\otimes}\,\text{L}(H)=\M\rtimes H$, where $H$ acts trivially on $\M$.
Before proving the conclusion, we recall the following fact proved in \cite[Proposition 3.4]{CIK13}.

\begin{fact}\label{CIK}
If $\Ss\subset q\M q$ is a von Neumann subalgebra such that $\Delta(\Ss)\prec_{\M\,\overline{\otimes}\,\text{L}(H)}\M\,\overline{\otimes}\,\text{L}(\Sigma)$, for some subgroup $\Sigma<H$, then $\Ss\prec_\M\Q\rtimes\delta^{-1}(\Sigma)$.
\end{fact}

Assume by contradiction that the conclusion is false. Then  \cite[Lemma 2.4(2)]{DHI16} provides a nonzero projection $z\in \Nn'\cap p\M p$ such that $\P z\nprec_{\M}\Q\rtimes\ker(\delta)$.
 {Since $\P$ is amenable relative to $\Q\rtimes\text{ker}(\delta)$ we get that $\Delta(\P)$ is amenable relative to $\Delta(\Q\rtimes\ker(\delta))=\Q\,\overline{\otimes}\, 1$ and thus to $\M\,\overline{\otimes}\,1$}.
Since $H$ is  hyperbolic, applying \cite[Theorem 1.4]{PV12} to $\Delta(\P z)\subset \M\,\overline{\otimes}\,\text{L}(H)$ gives that either 1) $\Delta(\P z)\prec_{\M\,\overline{\otimes}\,\text{L}(H)}\M\,\overline{\otimes}\, 1$ or 2) $\Delta(\Nn z)$ is amenable relative to $\M\,\overline{\otimes}\, 1$ inside $\M\,\overline{\otimes}\,\text{L}(H)$.

If 1) holds, then Fact \ref{CIK} gives that $\P z\prec_{M}\Q\rtimes\ker(\delta)$, which is a contradiction.
If 2) holds, then there is a sequence $\eta_n\in \text{L}^2(\Delta(z)\langle \M\,\overline{\otimes}\,\text{L}({H}),\M\,\overline{\otimes}\,1\rangle\Delta(z))^{\bigoplus\infty}$ such that $\|\langle \cdot\eta_n,\eta_n\rangle-\tau(\cdot)\|\rightarrow 0$, $\|\langle\eta_n\cdot,\eta_n\rangle-\tau(\cdot)\|\rightarrow 0$, and $\|y\eta_n-\eta_n y\|_2\rightarrow 0$, for every $y\in\Delta(\Nn z)$ (see Remark \ref{TT}).
 Since  $\R\subset \Nn$ has the relative property (T),   by \cite[Proposition 4.7]{Po01b},  so does $\Delta(\R z)\subset\Delta(\Nn z)$. Hence, there is a nonzero $\eta\in \text{L}^2(\Delta(z)\langle \M\,\overline{\otimes}\,\text{L}(H),\M\,\overline{\otimes}\, 1\rangle\Delta(z))$ such that $y\eta=\eta y$, for every $y\in\Delta(\R z)$.
Then $\zeta=\eta^*\eta\in  \text{L}^1(\Delta(z)\langle \M\,\overline{\otimes}\,\text{L}(H),\M\,\overline{\otimes}\, 1\rangle\Delta(z))$ is nonzero and satisfies $\zeta\geq 0$ and $y\zeta=\zeta y$, for every $y\in\Delta(\R z)$. Let $t>0$ such the spectral projection $a={\bf 1}_{[t,\infty)}(\zeta)$ of $\zeta$ is nonzero. Then $a\in \Delta(\R z)'\cap \Delta(z)\langle \M\,\overline{\otimes}\,\text{L}(H),\M\,\overline{\otimes}\, 1\rangle\Delta(z)$. As $ta\leq\zeta$, we get that $\text{Tr}(a)\leq{\text{Tr}(\zeta)}/{t}<\infty$.  Theorem \ref{corner} implies that $\Delta(\R z)\prec_{\M\,\overline{\otimes}\,\text{L}(H)}\M\,\overline{\otimes}\, 1$.  Applying Fact \ref{CIK} again, we get that $\R\prec_{\M}\Q\rtimes\ker (\delta)$, a contradiction.
\end{proof}


\section{$W^*$-superrigid groups with property (T)}\label{BER}


The goal of this section is to prove Theorem \ref{superr}. 
 We begin with an informal outline the proof of Theorem \ref{superr}.   For simplicity, let $G\in\mathcal W\mathcal R(A,B)$ be a property (T) group, where $A$ is nontrivial abelian and $B$ is an ICC subgroup of a hyperbolic group. Denote $\mathcal M=\text{L}(G)$ and assume that  $\mathcal M=\text{L}(H)$, for some arbitrary group $H$. We denote by $(u_g)_{g\in G}\subset \text{L}(G)$ and $(v_h)_{h\in H}\subset \text{L}(H)$  the canonical generating unitaries.

The proof of Theorem \ref{superr} is based on a deformation/rigidity strategy, which plays property (T) against two key properties of wreath-like product groups that relate them to wreath product groups. Namely, letting $\mathcal P=\text{L}(A^{(B)})$, we have:

\begin{enumerate} 
\item[(i)]\label{i}
The action $G\curvearrowright^{\sigma}\mathcal P=\text{L}(A)^B$ given by $\sigma_g=\text{Ad}(u_g)$ is a generalized Bernoulli action. 
\item[(ii)]\label{ii}
 $\mathcal P\subset\mathcal M$ is a Cartan subalgebra and  $\mathcal R(\mathcal P\subset\mathcal M)$ is the orbit equivalence
(OE) relation of the Bernoulli action $B\curvearrowright\widehat{A}^B$, where $\widehat{A}$ is the dual of $A$.
\end{enumerate}

Specifically, rather than (ii), we use the following ``transfer principle" implied by (ii).  Let $\mathcal N=\text{L}(A\wr B)$. 
If $\mathcal P\,\overline{\otimes}\,\mathcal P\subset\mathcal D\subset\mathcal M\,\overline{\otimes}\,\mathcal M$ is a subalgebra, then $\mathcal R(\mathcal P\,\overline{\otimes}\,\mathcal P\subset\mathcal D)$ is a subequivalence {relation} of the OE relation of the product action $B\times B\curvearrowright \widehat{A}^B\times\widehat{A}^B$. So, there is subalgebra $\mathcal P\,\overline{\otimes}\,\mathcal P\subset\widetilde{\mathcal D}\subset\mathcal N\,\overline{\otimes}\,\mathcal N$ such that the inclusions $\mathcal P\,\overline{\otimes}\,\mathcal P\subset\mathcal D$ and $\mathcal P\,\overline{\otimes}\,\mathcal P\subset\widetilde{\mathcal D}$ have isomorphic equivalence relations. In particular, if $\widetilde{\mathcal D}$ is amenable, then $\mathcal D$ is amenable. The use of this transfer principle is a main novelty of our approach.

Define the comultiplication  $\Delta:\mathcal M\rightarrow\mathcal M\,\overline{\otimes}\,\mathcal M$ by letting $\Delta(v_h)=v_h\otimes v_h$, $h\in H$ \cite{PV09}.
 In the first part of the proof, following \cite{Io10,IPV10}, we analyze $\Delta$ and show that  $\mathcal D:=\Delta(\mathcal P)'\cap\mathcal M\,\overline{\otimes}\,\mathcal M$  is essentially unitarily conjugated to $\mathcal P\,\overline{\otimes}\,\mathcal P$.
Since $\Delta(\mathcal P)$ is amenable and has large normalizer, using Popa and Vaes' structure theorem \cite{PV12}  for normalizers inside crossed products by hyperbolic groups  as in \cite{CIK13} allows us to essentially show that $\Delta(\mathcal P)\subset\mathcal P\,\overline{\otimes}\,\mathcal P$, after unitary conjugacy. Next, as in \cite{BV13}, we use solidity results for generalized Bernoulli crossed products. Thus, applying the above transfer principle to $\mathcal D$ and extending the solidity theorem of \cite{CI08} (see Section \ref{SOL}), we derive that $\mathcal D$ is amenable. Another application of \cite{PV12} implies that $\mathcal D$ is essentially unitarily conjugated to $\mathcal P\,\overline{\otimes}\,\mathcal P$.

The second part of the proof is a ``discretization argument".
By the first part, we may assume that $\Delta(\mathcal P)'\cap\mathcal M\,\overline{\otimes}\,\mathcal  M=\mathcal P\,\overline{\otimes}\,\mathcal P$, after unitary conjugacy.
In particular, the group $\Delta(G)=(\Delta(u_g))_{g\in G}$ normalizes $\mathcal P\,\overline{\otimes}\,\mathcal P$. Moreover, the resulting action of $\Delta(G)$ on $\mathcal P\,\overline{\otimes}\,\mathcal P$ descends to a free action of $\Delta(B)$.  A second application of our transfer principle gives a free action of $\Delta(B)\cong B$ on $\widehat{A}^B\times\widehat{A}^B$ whose OE relation is contained in that of $B\times B\curvearrowright \widehat{A}^B\times\widehat{A}^B$. Since $B$ has property (T), a generalization of a theorem from \cite{Po04} (see Section \ref{STRONG}) allows us to assume that $\Delta(B)\subset B\times B$, as groups of automorphisms of $\widehat{A}^B\times\widehat{A}^B$.
Consequently, $\Delta(G)$ ``discretizes" modulo $\mathscr U(\mathcal P\,\overline{\otimes}\,\mathcal P)$: there are maps $\delta_1,\delta_2:G\rightarrow G$  and $\omega:G\rightarrow\mathscr U(\mathcal P\,\overline{\otimes}\,\mathcal P)$ such that $\Delta(u_g)=\omega_g(u_{\delta_1(g)}\otimes u_{\delta_2(g)})$, for every $g\in G$.

In the last part of the proof, we first use the symmetry and associativity properties of $\Delta$ to show that we may take $\delta_1$ and $\delta_2$ to be the identity of $G$. In other words,  we have $\Delta(u_g)=\omega_g(u_g\otimes u_g)$, for every $g\in G$.  So far we have only used that $B$, but not $G$, has property (T).
Another main novelty of this paper is the way we use property (T) for $G$. 
 We start by observing that as $G$ has property (T) and $\sigma$ is a generalized Bernoulli action by (i), Popa's cocycle superrgidity theorem \cite{Po05} implies that any $1$-cocycle for $\sigma\otimes\sigma$ is cohomologous to a character of $G$.
Thus, since $(\omega_g)_{g\in G}$ is a $1$-cocycle for $\sigma\otimes\sigma$, we can  find a unitary $w\in \mathcal P\,\overline{\otimes}\,\mathcal P$ and a character $\rho:G\rightarrow \mathbb T$ such that $w\Delta(u_g)w^*=\rho(g)(u_g\otimes u_g)$, for every $g\in G$. But then a general result from \cite{IPV10} implies the conclusion of Theorem \ref{superr}.


\subsection{Strong rigidity for orbit equivalence embeddings}\label{STRONG}


Popa's deformation rigidity/theory has been used to derive a number of powerful rigidity results for von Neumann algebras associated to Bernoulli actions.
To prove Theorem \ref{superr}, we need to extend two of such results from plain to generalized Bernoulli actions.

Let $B\curvearrowright (X,\mu)=(Y^{B},\nu^{B})$ be a Bernoulli action of a countable group $B$.
In \cite{Po03}, Popa discovered his malleable deformation of  the crossed product $\M=\text{L}^{\infty}(X)\rtimes B$. He used this in \cite{Po03,Po04} to prove a series of rigidity results  under property (T) assumptions.  In particular, in \cite[Theorem 0.5]{Po04}, he obtained the following strong rigidity
theorem for orbit equivalence embeddings: if $B$ is ICC and $H\curvearrowright (X,\mu)$ is a free ergodic p.m.p.\ action of an ICC group $H$ admitting an infinite normal subgroup with the relative property (T) such that $H\cdot x\subset B\cdot x$, for almost every $x\in X$, then $\theta\circ H\circ\theta^{-1}\subset B$,  for some $\theta\in [\sR(B\curvearrowright X)]$.

In \cite{Po06b}, Popa introduced his spectral
gap rigidity principle and combined it with the deformation/rigidity methods of \cite{Po03,Po04} to prove solidity results for $M$. These methods were combined with those of
 \cite{Io06} in \cite{CI08} to prove the following relative solidity theorem:  $\Q'\cap \M$ is amenable, for any diffuse subalgebra $\Q\subset \text{L}^{\infty}(X)$.

In the proof of Theorem \ref{superr}, we will need analogues of the above {\it strong rigidity for OE embeddings} and  {\it relative solidity} results for the product action $ B\times B\curvearrowright (X\times X,\mu\times\mu)$. To this end, we use results from \cite{IPV10} to extend these results to certain classes of generalized Bernoulli actions  which include the action $B\times B\curvearrowright (X\times X,\mu\times\mu)$.
More generally, we treat measure preserving actions $C\curvearrowright (Z^J,\rho^J)$ which are built over an action $C\curvearrowright J$, i.e., such that the associated trace preserving action $C\curvearrowright\text{L}^{\infty}(Z)^J$ is built over $C\curvearrowright J$ in the sense of \cite[Definition 2.5]{KV15} (see Example \ref{exmpl} (c)).

First, in this section, we extend \cite[Theorem 0.5]{Po04} to a large class of generalized Bernoulli actions. Although the next statement is ergodic-theoretic, as in \cite{Po04}, its proof relies crucially on the framework of von Neumann algebras.

\begin{thm}\label{SOE}
Let $ B$ be an ICC group and $ B\curvearrowright^{\alpha}(X,\mu)=(Y^I,\nu^I)$ be a measure preserving action built over an action $B\curvearrowright I$, where $(Y,\nu)$ is a probability space.  Let $\widetilde B= B\times\mathbb Z/n\mathbb Z$ and $(\widetilde X,\widetilde\mu)=(X\times\mathbb Z/n\mathbb Z,\mu\times c)$, where $n\in\mathbb N$ and $c$ is the counting measure of $\mathbb Z/n\mathbb Z$. Consider the action $\widetilde B\curvearrowright^{\widetilde\alpha} (\widetilde X,\widetilde\mu)$ given by $(g,a)\cdot (x,b)=(g\cdot x,a+b)$.

Let $D$ be a countable group with a normal subgroup $D_0$ such that the pair $(D,D_0)$ has the relative property (T).
Let $X_0\subset \widetilde X$ be a measurable, {non-negligible} set  and $D\curvearrowright^{\beta} (X_0,\widetilde\mu_{|X_0})$ be a weakly mixing free measure preserving action such that  $D\cdot x\subset\widetilde B\cdot x$, for almost every $x\in X_0$.
Assume that for every $i\in I$, there is a sequence $(h_m)\subset D_0$ such that  for every  $s,t\in\widetilde B$ we have $\widetilde\mu(\{x\in X_0\mid h_m\cdot x\in s(\emph{Stab}_B(i)\times\mathbb Z/n\mathbb Z)t\cdot x\})\rightarrow 0$, as $m\rightarrow\infty$.

Then there exist a subgroup $B_1< B$, a finite normal subgroup $K\lhd B_1$, an isomorphism $\delta\colon D\rightarrow B_1/K$, a  measurable set $X_1\subset X\equiv X\times\{0\}$ and $\theta\in [\sR(\widetilde B\curvearrowright\widetilde X)]$ such that
\begin{enumerate}
\item[(a)] $X_1$ is a fundamental domain for $\alpha_{|K}$, i.e., $X=\bigsqcup_{k\in K}\alpha(k)(X_1)$,
\item[(b)] $\theta(X_0)=X_1$, so in particular $\widetilde\mu(X_0)=\mu(X_1)=|K|^{-1}\leq 1$, and
\item[(c)] $\theta\circ\beta(h)=\gamma(\delta(h))\circ\theta$, for every $h\in D$, where $B_1/K\curvearrowright^{\gamma}(X_1,\mu_{|X_1})$ is the action given by $\{\gamma(gK)x\}=\alpha(gK)x\cap X_1$, for every $g\in B_1$ and $x\in X_1$.
\end{enumerate}

Assume additionally that for every $g\in B\setminus\{1\}$, there is a sequence $(l_m)\subset D$ such that $\widetilde\mu(\{x\in X_0\mid l_m\cdot x\in s(\emph{C}_B(g)\times\mathbb Z/n\mathbb Z)t\cdot x\})\rightarrow 0$, as $m\rightarrow\infty$, for all $s,t\in\widetilde B$. Then $K=\{1\}$.
\end{thm}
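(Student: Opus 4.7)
\emph{Proof plan.} The argument follows the deformation/rigidity template of Popa's strong rigidity theorem for OE embeddings into Bernoulli actions \cite[Theorem 0.5]{Po04}, adapted here to the generalized Bernoulli setting. First, I would encode the OE embedding $\sR(D\curvearrowright X_0)\subseteq\sR(\widetilde B\curvearrowright\widetilde X)_{|X_0}$ as an inclusion of von Neumann algebras. Set $\widetilde\M=L^\infty(\widetilde X)\rtimes\widetilde B$, $\A=L^\infty(\widetilde X)$, $p=\mathbf 1_{X_0}$, and let $(v_h)_{h\in D}\subset\sN_{p\widetilde\M p}(\A p)$ be the unitaries implementing $\beta$. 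The action $\widetilde\alpha$ is of generalized Bernoulli type: on the $X$-coordinate it is built over $B\curvearrowright I$, while the central $\mathbb Z/n\mathbb Z$-factor of $\widetilde B$ translates the second coordinate and acts trivially on $X$. Hence Popa's $s$-malleable deformation from \cite{Po03} acts on the $Y^I$-factor and extends verbatim to $\widetilde\M$ by acting trivially on $\mathbb Z/n\mathbb Z$, yielding a continuous one-parameter family $(\alpha_t)_{t\in\mathbb R}$ of trace-preserving automorphisms of a dilation $\widehat\M\supset\widetilde\M$ together with a transversal period-two automorphism $\sigma$.

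Next I would apply the spectral gap / property (T) machinery. The quantitative hypothesis that $(h_m)\subset D_0$ escapes in measure every double coset $s(\text{Stab}_B(i)\times\mathbb Z/n\mathbb Z)t$, combined with the decomposition of $\alpha$ as a product of co-induced actions from the stabilizers (Example \ref{exmpl}(c) and Lemma \ref{coind}), implies that $\alpha_t\to\mathrm{id}$ uniformly in $\|\cdot\|_2$ on the unit ball of $\{v_h:h\in D_0\}''$ as $t\to 0$. Because $(D,D_0)$ has relative property (T), this upgrades, for some small $t_0>0$, to uniform closeness $\sup_{h\in D}\|\alpha_{t_0}(v_h)-v_h\|_2<\sqrt 2/2$. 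Popa's transversality inequality and the standard untwisting argument (cf. \cite[Section 4]{Po04}) then produce a nonzero partial isometry $w\in\widehat\M$ conjugating the $D$-representation into the ``position-preserving'' summand of the dilation, providing $\theta\in[\sR(\widetilde B\curvearrowright\widetilde X)]$ and a group homomorphism $\widetilde\delta\colon D\to\widetilde B$ with $\theta\circ\beta(h)=\widetilde\delta(h)\circ\theta$ almost everywhere on $X_0$.

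Third, I would convert $\widetilde\delta$ into the stated normal form. Freeness of $\beta$ forces $\widetilde\delta$ to be injective. Let $B_1\leq B$ be the projection of $\widetilde\delta(D)$ onto the first factor of $\widetilde B$, and let $K\lhd B_1$ be the image under that projection of $\widetilde\delta(D)\cap(B\times\mathbb Z/n\mathbb Z)\cap\ker(\text{second projection}\circ\widetilde\delta^{-1})$; finiteness of $\mathbb Z/n\mathbb Z$ forces $K$ to be finite, and $\widetilde\delta$ descends to an isomorphism $\delta\colon D\to B_1/K$. Weak mixing of $\beta$ together with an ergodic decomposition along the $K$-orbits allows $\theta$ to be adjusted by an element of $[\sR(\widetilde B\curvearrowright\widetilde X)]$ so that $X_1:=\theta(X_0)$ lies in $X\equiv X\times\{0\}$ and is a fundamental domain for $\alpha_{|K}$; this gives (a)--(c). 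For the supplementary claim, suppose $K\neq\{1\}$ and choose $1\neq g\in B_1$ with $gK\neq K$: any preimage $h\in D$ of $gK$ under $\delta$ must, via (c) and the $K$-equivariance forced by the fundamental-domain structure, act on $X_0$ inside a union of double cosets $s(\text{C}_B(g)\times\mathbb Z/n\mathbb Z)t$ of $\widetilde B$, directly contradicting the escape hypothesis on the sequence $(l_m)\subset D$.

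The main obstacle will be the second step: extracting uniform $\|\cdot\|_2$-convergence of $\alpha_t$ on $L(\{v_h:h\in D_0\})$ from the coset-escape hypothesis on $D_0$. In the plain Bernoulli case this is immediate from Popa's computation of $\|\alpha_t(u_g)-u_g\|_2$ using mixing of $B$; here the coset-escape hypothesis plays the role of mixing, and one must exploit the co-induced decomposition of $\widetilde\alpha$ to reduce the estimate site-by-site to Popa's original analysis on each $\text{Stab}_B(i)$-orbit.
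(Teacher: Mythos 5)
There is a genuine gap, in fact two. First, your second step inverts the roles of the two hypotheses. You claim that the coset-escape condition on $(h_m)\subset D_0$ yields uniform $\|\cdot\|_2$-convergence of the malleable deformation on the unit ball of $\{v_h\mid h\in D_0\}''$, and that relative property (T) then upgrades this to uniform closeness over all of $D$. This is backwards and the first half is false: a mixing/escape-type condition can never force a deformation to converge uniformly on a subalgebra -- that is precisely what rigidity (here, relative property (T) of $(D,D_0)$, transported to $\text{L}(D_0)\subset\text{L}(D)$) provides. In the paper the escape hypothesis is used for something else entirely: it proves the non-embedding $\text{L}(D_0)\nprec_{\widetilde\M}(\text{L}^{\infty}(X)\rtimes\text{Stab}_B(i))\,\overline{\otimes}\,\mathbb M_n(\mathbb C)$, which is exactly what is needed (via Corollary \ref{rigsub}, i.e.\ the IPV10 analysis of rigid subalgebras of generalized Bernoulli crossed products) to rule out intertwining into the stabilizer algebras $\A^{\mathcal F}\rtimes\text{Stab}_B(\mathcal F)$ and force the target to be $\text{L}(B)\,\overline{\otimes}\,\mathbb M_n(\mathbb C)$; it is used again later to prove ergodicity of $\alpha_{|B_1}$ and, in the moreover part, to kill $K$. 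The passage from $\text{L}(D_0)$ to $\text{L}(D)$ is not a property (T) upgrade but the normalizer step (normality of $D_0$ plus weak mixing, \cite[Lemma 4.1(1)]{IPV10}).

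Second, the output you claim from the untwisting -- a genuine homomorphism $\widetilde\delta\colon D\to\widetilde B$ with $\theta\circ\beta(h)=\widetilde\delta(h)\circ\theta$ on $X_0$ -- is strictly stronger than the theorem and cannot be extracted at that stage; if it held, the conclusion would read $K=\{1\}$ unconditionally, whereas the theorem needs the extra centralizer-escape hypothesis for that. What the intertwining actually gives is a partial isometry $y$ with $y^*\text{L}(D)y\subset p(\text{L}(B)\,\overline{\otimes}\,\mathbb M_n(\mathbb C))p$, i.e.\ group-like unitaries $(y^*v_hy)_{h\in D}$ normalizing the abelian algebra $y^*\text{L}^{\infty}(X_0)y$ with a weakly mixing action; one must then invoke the structure theorem \cite[Theorem 6.1]{IPV10} to write $w^*(y^*v_hy)w=\eta(h)u_{\widehat\delta(h)}p_Kq$, and it is this analysis that produces the subgroup $B_1<B$, the finite normal subgroup $K\lhd B_1$, the character $\rho$ and the projection $p_K$. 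Your identification of $K$ as coming from the $\mathbb Z/n\mathbb Z$ factor of $\widetilde B$ is incorrect: $K$ is a finite normal subgroup of $B_1\le B$, it encodes the $|K|$-to-one quotient $X\to X/K$ (whence $\widetilde\mu(X_0)=|K|^{-1}$, independent of $n$), and the fundamental-domain statement (a)--(b) is obtained from the partial isometry $t=|K|^{1/2}p_Kp_1$ built from $p_K$, not from any ergodic decomposition along $\mathbb Z/n\mathbb Z$-orbits. Without this step your ``normal form'' paragraph, and consequently your sketch of the moreover assertion (which in the paper takes $g\in K\setminus\{1\}$ and uses finiteness of the $B_1$-conjugacy class of $g$ inside $K$ to get $[B_1:B_1\cap\text{C}_B(g)]<\infty$, contradicting $z^*\text{L}(D)z\subset\text{L}(B_1)p_K$), does not go through.
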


\begin{rem}
The action $B_1/K\curvearrowright^{\gamma}(X_1,\mu_{|X_1})$ is isomorphic to the natural quotient action $B_1/K\curvearrowright (X/K,\bar{\mu})$, where $\bar{\mu}$ is the push-forward of $\mu$ through the quotient map $X\rightarrow X/K$.
\end{rem}

The proof of Theorem \ref{SOE} relies on the following result {which is a direct consequence of} \cite{IPV10}.
\begin{cor} [\cite{IPV10}]\label{rigsub}
Let $ B$ be an ICC group, $B\curvearrowright I$ be an action, $(\A,\tau)$ be a tracial von Neumann algebra and $B\curvearrowright (\A^I,\tau)$ be a trace preserving action built over $B\curvearrowright I$.
Let $\M=\A^I\rtimes B$ and $(\Nn,\tau)$ be a tracial factor. Let $\Q\subset p(\M\,\overline{\otimes}\,\Nn)p$ be a von Neumann subalgebra with the relative property (T) such that $\Q\nprec_{\M\,\overline{\otimes}\,\Nn}(\A^I\rtimes\emph{Stab}_B(i))\,\overline{\otimes}\,\Nn$, for all $i\in I$. Let $\P=\mathcal N_{p(\M\,\overline{\otimes}\,\Nn)p}(\Q)''$.

Then there exists $v\in\M\,\overline{\otimes}\,\Nn$ such that $v^*v=p$ and $v\P v^*\subset\emph{L}(B)\,\overline{\otimes}\,\Nn$.
\end{cor}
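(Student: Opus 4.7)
The plan is to deduce Corollary \ref{rigsub} from the intertwining theorem for relative property (T) subalgebras of (generalized) Bernoulli crossed products developed by Popa, Vaes, and Ioana in \cite{IPV10}. Roughly speaking, that theorem says: if one takes a relative property (T) subalgebra inside a generalized Bernoulli crossed product of an ICC group, then either the subalgebra intertwines into one of the ``stabilizer subalgebras'' $\A^I\rtimes\text{Stab}_B(i)$, or its normalizer intertwines into $\text{L}(B)$. The hypothesis in the corollary rules out the first alternative for all $i\in I$, so only the second is possible, which is exactly the desired conclusion.

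The reduction goes as follows. First, by Lemma \ref{coind} (i.e.\ Example \ref{exmpl}(c) from \cite{KV15}), the hypothesis that $B\curvearrowright(\A^I,\tau)$ is built over $B\curvearrowright I$ means that, after identifying a set $J\subset I$ meeting each $B$-orbit once, this action is conjugate to the tensor product $\bigotimes_{i\in J}\sigma_i$ of the co-induced actions $\sigma_i = \mathrm{CInd}_{\text{Stab}_B(i)}^{B}(\rho_i)$. This places us within the class of crossed products to which the IPV10 machinery (Popa's s-malleable deformation, spectral gap, and transfer-of-rigidity) applies. To absorb the extra factor $\Nn$, one writes $\M\,\overline{\otimes}\,\Nn = (\A^I\,\overline{\otimes}\,\Nn)\rtimes B$, where $B$ acts trivially on $\Nn$, and extends the s-malleable deformation of $\A^I$ to $\A^I\,\overline{\otimes}\,\Nn$ by tensoring with the identity on $\Nn$. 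With this setup, the orbit of $\Q$ under the deformation stays within $\M\,\overline{\otimes}\,\Nn$, and the ``stabilizer subalgebras'' of the combined system are exactly $(\A^I\rtimes\text{Stab}_B(i))\,\overline{\otimes}\,\Nn$ for $i\in I$.

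Next, one runs the IPV10 dichotomy. Since $\Q\subset p(\M\,\overline{\otimes}\,\Nn)p$ has relative property (T), the deformation converges uniformly on $(\Q)_1$; this forces a partial isometry implementing intertwining of some corner of $\Q$ after a sufficiently small deformation time. By Popa's transfer-of-rigidity argument (as carried out in \cite{IPV10}), this yields either (i) $\Q\prec_{\M\,\overline{\otimes}\,\Nn}(\A^I\rtimes\text{Stab}_B(i))\,\overline{\otimes}\,\Nn$ for some $i\in I$, or (ii) one obtains, using the ICC assumption on $B$ and normality of the argument in $\P=\mathcal{N}_{p(\M\,\overline{\otimes}\,\Nn)p}(\Q)''$, a partial isometry $v\in\M\,\overline{\otimes}\,\Nn$ with $v^*v=p$ and $v\P v^*\subset \text{L}(B)\,\overline{\otimes}\,\Nn$. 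Alternative (i) is excluded by the hypothesis, so (ii) must hold.

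The main obstacle is verifying that the IPV10 dichotomy, whose original statement concerns the classical Bernoulli action $B\curvearrowright \A^B$ tensored with a factor, extends verbatim to actions merely \emph{built over} $B\curvearrowright I$. This is handled by the co-induction decomposition from Lemma \ref{coind}: it identifies precisely which subalgebras play the role of ``stabilizer subalgebras'' (namely $\A^I\rtimes\text{Stab}_B(i)$, $i\in I$), so that the exceptional alternative of IPV10 translates exactly into the condition excluded by hypothesis. Once this identification is made, the rest of the argument, including the tensoring by the factor $\Nn$, is a routine adaptation of the deformation/rigidity proof already present in \cite{IPV10}.
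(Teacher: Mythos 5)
Your proposal is correct and follows essentially the same route as the paper: invoke the deformation/rigidity dichotomy of \cite{IPV10} for relative property (T) subalgebras of (generalized) Bernoulli-type crossed products tensored with $\Nn$, observe that the stabilizer alternative is excluded by hypothesis (the paper phrases this via intertwining into $(\A^{\mathcal F}\rtimes\mathrm{Stab}_B(\mathcal F))\,\overline{\otimes}\,\Nn$ for a finite set $\mathcal F$, which for $\mathcal F\neq\emptyset$ sits inside a single stabilizer algebra), and then pass from $\Q$ to its normalizer $\P$ and upgrade to a partial isometry $v$ with $v^*v=p$ using \cite[Lemma 4.1(1) and Corollary 4.3]{IPV10} together with $B$ being ICC. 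The only cosmetic difference is that you handle the ``built over'' generality via the co-induction decomposition of Lemma \ref{coind}, whereas the paper simply notes that the arguments of \cite[Section 4]{IPV10} carry over verbatim to such actions.
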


\begin{proof}
Assume that $B\curvearrowright \A^I$ is the generalized Bernoulli action associated to $B\curvearrowright I$.
For $\mathcal F\subset I$,  let $\sM_{\mathcal F}=(\A^{\mathcal F}\rtimes\text{Stab}_B(\mathcal F))\,\overline{\otimes}\,\Nn$. Let $\sM_{\emptyset}=\text{L}(B)\,\overline{\otimes}\,\Nn$.
Let $p_0\in\mathcal \sZ(\P)$ be a nonzero projection. Since $\Q\nprec_{\M\,\overline{\otimes}\,\Nn}(\A^I\rtimes\text{Stab}_B(i))\,\overline{\otimes}\,\Nn$, for all $i\in I$, and  $\Q p_0\subset p_0(\M\,\overline{\otimes}\,\Nn)p_0$ has the relative property (T), the proof of \cite[Theorem 4.2]{IPV10} shows that $\Q p_0\prec_{\M\,\overline{\otimes}\,\Nn}\sM_{\mathcal F}$, for a finite, possibly empty, set $\mathcal F\subset I$.
 {We claim that $\mathcal F=\emptyset$. Otherwise, if $i\in \mathcal F$, then $\text{Stab}_B(\mathcal F)\subset\text{Stab}_B(i)$ and thus $\sM_{\mathcal F}\subset (\A^I\rtimes\text{Stab}_B(i))\,\overline{\otimes}\,\Nn$, which would imply that $\Q p_0\prec_{\M\,\overline{\otimes}\,\Nn}(\A^I\rtimes\text{Stab}_B(i))\,\overline{\otimes}\,\Nn$, contradicting our assumption. Thus, $\mathcal F=\emptyset$ and therefore $\Q p_0\prec_{\M\,\overline{\otimes}\,\Nn}\mathcal M_{\emptyset}=\text{L}(B)\,\overline{\otimes}\,\Nn$.}
Further, \cite[Lemma 4.1(1)]{IPV10} implies that $\P p_0\prec_{\M\,\overline{\otimes}\,\Nn}\text{L}(B)\,\overline{\otimes}\,\Nn$. Since this holds for every nonzero projection $p_0\in\mathcal \sZ(\P)$ and $B$ is ICC, repeating the beginning of the proof of \cite[Corollary 4.3]{IPV10} gives the conclusion.

In general, when $B\curvearrowright \A^I$ is built over $B\curvearrowright I$,  the above proof and results from \cite[Section 4]{IPV10} carry over verbatim to give the conclusion  in this case.
\end{proof}

\begin{proof} [Proof of Theorem \ref{SOE}]Denote $\M=\text{L}^{\infty}(X)\rtimes B$, $\widetilde \M=\text{L}^{\infty}(\widetilde X)\rtimes\widetilde B$ and $\Nn=\text{L}^{\infty}(X_0)\rtimes D$.
Denote by $(u_g)_{g\in B}\subset \M$, $(\widetilde u_g)_{g\in\widetilde B}\subset\widetilde \M$ and $(v_h)_{h\in D}\subset \Nn$ the canonical unitaries.
For $h\in D$ and $g\in\widetilde B$, let $A_h^g=\{x\in X_0\mid h^{-1}\cdot x=g^{-1}\cdot x\}$.
Let $p_0={\bf 1}_{X_0}$ and consider the $*$-homomorphism $\pi\colon \Nn\rightarrow p_0\widetilde \M p_0$ given by $\pi(a)=a$ and $\pi(v_h)=\sum_{g\in\widetilde B}{\bf 1}_{A_h^g}\widetilde u_g$,  for every $a\in \text{L}^{\infty}(X_0)$ and $h\in D$. We view $\Nn$ as a subalgebra of $\widetilde \M$ by identifying it with $\pi(\Nn)$. We identify $\widetilde \M=\M\,\overline{\otimes}\,\mathbb M_n(\mathbb C)$ and endow $\widetilde\M$ with the normalized trace $\widetilde\tau=\tau\otimes n^{-1}\text{Tr}$.

 We first claim that for every $i\in I$ we have \begin{equation}
 \label{nonembed}\text{$\text{L}(D_0)\nprec_{\widetilde \M}(\text{L}^{\infty}(X)\rtimes \text{Stab}_{B}(i))\,\overline{\otimes}\,\mathbb M_n(\mathbb C)=\text{L}^{\infty}(\widetilde X)\rtimes (\text{Stab}_B(i)\times\mathbb Z/n\mathbb Z)$.}\end{equation}

Let $i\in I$. Put $B_0=\text{Stab}_{B}(i)$ and $\widetilde B_0= B_0\times\mathbb Z/n\mathbb Z$.  Let $s,t\in\widetilde B$ and $a,b\in \text{L}^{\infty}(\widetilde X)$ with $\|a\|,\|b\|\leq 1$.
If $h\in D_0$, then
 $E_{\text{L}^{\infty}(\widetilde X)\rtimes \widetilde B_0}(a\widetilde u_{s}v_{h}\widetilde u_{t}b)=a\big(\sum_{g\in s^{-1}\widetilde B_0t^{-1}}\widetilde\alpha(s)({\bf 1}_{A_h^g})\widetilde u_{sgt}\big)b$, thus $\|E_{\text{L}^{\infty}(\widetilde X)\rtimes \widetilde B_0}(a\widetilde u_{s}v_{h}\widetilde u_{t}b)\|_2^2\leq \sum_{g\in s^{-1}\widetilde B_0t^{-1}}\widetilde\mu(A_h^g)=\widetilde\mu(\{x\in\widetilde X\mid h^{-1}\cdot x\in t\widetilde B_0s\cdot x\})$
 Using this fact, the hypothesis gives a sequence $(h_m)\subset D_0$ such that $\|E_{\text{L}^{\infty}(\widetilde X)\rtimes \widetilde B_0}(a\widetilde u_{s}v_{h_m}\widetilde u_{t}b)\|_2\rightarrow 0$. Since this holds for every $s,t\in\widetilde B$ and $a,b\in \text{L}^{\infty}(\widetilde X)$ with $\|a\|,\|b\|\leq 1$, we conclude that $\|E_{\text{L}^{\infty}(\widetilde X)\rtimes \widetilde B_0}(cv_{h_m}d)\|_2\rightarrow 0$, for every $c,d\in\widetilde\M$,
 which proves \eqref{nonembed}.

Since $(D,D_0)$ has the relative property (T), so does the inclusion $\text{L}(D_0)\subset \text{L}(D)$ by \cite[Proposition 5.1]{Po01b}.
As $D_0\lhd D$ is normal and $ B$ is ICC, using \eqref{nonembed} and Corollary \ref{rigsub} we find a partial isometry $y\in \widetilde \M$ such that $yy^*=p_0$, $p\colon =y^*y\in \text{L}( B)\,\overline{\otimes}\,\mathbb M_n(\mathbb C)$, and \begin{equation}\label{conjugi}y^*\text{L}(D)y\subset p(\text{L}( B)\,\overline{\otimes}\,\mathbb M_n(\mathbb C))p. \end{equation}

Thus, the group of unitaries $(y^*v_hy)_{h\in D}\subset\sU(p(\text{L}( B)\,\overline{\otimes}\,\mathbb M_n(\mathbb C))p)$ normalizes $y^*\text{L}^{\infty}(X_0)y\subset p\widetilde \M p$. Moreover, the action $(\text{Ad}(y^*v_hy))_{h\in D}$ on $y^*\text{L}^{\infty}(X_0)y$ is isomorphic to $\beta$ and so is weakly mixing. By applying  \cite[Theorem 6.1]{IPV10}, whose conclusion holds with $d=1$ as $y^*\text{L}^{\infty}(X_0)y\subset p\widetilde \M p$ is a maximal abelian subalgebra, we get that there exist
\begin{itemize}
\item a subgroup $B_1< B$, a finite normal subgroup $K\lhd B_1$, a character $\rho\colon K\rightarrow\mathbb T$ such that
the associated projection $p_K=|K|^{-1}\sum_{k\in K}\rho(k)u_k$ commutes with $\{u_g\mid g\in B_1\}$,
\item an isomorphism $\delta\colon D\rightarrow B_1/K$,
\item an $\alpha(B_1)$-invariant projection $q\in \text{L}^{\infty}(X)$, and
\item a partial isometry $v\in \text{L}( B)\,\overline{\otimes}\,\mathbb M_{n,1}(\mathbb C)$  with $vv^*=p$
\end{itemize}
  such that
 $w=\tau(q)^{-1/2}vq$ is a partial isometry satisfying $ww^*=p$, $w^*w=p_Kq$, $w^*(y^*\text{L}^{\infty}(X_0)y)w=(\text{L}^{\infty}(X)q)^{K}p_K$ and we have that
\begin{equation}\label{Gamma1}\text{$w^*(y^*v_hy)w=\eta(h)u_{\widehat{\delta}(h)}p_Kq$, for every $h\in D$,}\end{equation}
where $\widehat{\delta}\colon D\rightarrow B_1$  and $\eta\colon D\rightarrow\mathbb T$ are maps such that $\widehat{\delta}(h)K=\delta(h)$, for every $h\in D$.

We next claim that $\alpha_{| B_1}$ is ergodic. Otherwise, we can find $i\in I$ such that $B_1\cap B_0<B_1$ has finite index, where $B_0=\text{Stab}_{ B}(i)$, for some $i\in I$ (see \cite[Proposition 2.3]{PV06}). By \eqref{nonembed} there is a sequence $(h_m)\subset D$ such that $\|E_{\text{L}( B_0)\,\overline{\otimes}\,\mathbb M_n(\mathbb C)}(av_{{h_m}}b)\|_2\rightarrow 0$, for all $a,b\in\widetilde \M$. Since $B_1\cap B_0<B_1$ has finite index,  we get  $\|E_{\text{L}(B_1)\,\overline{\otimes}\,\mathbb M_n(\mathbb C)}(av_{{h_m}}b)\|_2\rightarrow 0$, for all $a,b\in\widetilde \M$. On the other hand,  \eqref{Gamma1} implies that $\|E_{\text{L}(B_1)\,\overline{\otimes}\,\mathbb M_n(\mathbb C)}(w^*(y^*v_hy)w)\|_2=n^{-1/2}\|E_{\text{L}(B_1)}(p_Kq)\|_2\not=0$ for every $h\in D$. This gives a contradiction.

Since $\alpha_{|B_1}$ is ergodic, we further derive that $q=1$ and thus $w=v$. Let $z=yv$. Then $z$ is a partial isometry such that $zz^*=p_0,z^*z=p_K, z^*\text{L}^{\infty}(X_0)z=\text{L}^{\infty}(X)^{K}p_K$ and $z^*v_hz=\eta(h)u_{\widehat{\delta}(h)}p_K$, for every $h\in D$.

Let $X_1\subset X$ be a fundamental domain for $\alpha_{|K}$ and put $p_1={\bf 1}_{X_1}$. Then $t=|K|^{1/2}p_Kp_1$ is a partial isometry such that $tt^*=p_K,t^*t=p_1$ and $\text{L}^{\infty}(X)^Kp_K=t\text{L}^{\infty}(X_1)t^*$. Hence $\xi=zt$ is a partial isometry such that $\xi\xi^*=p_0$, $\xi^*\xi=p_1$ and $\xi^* \text{L}^{\infty}(X_0)\xi=\text{L}^{\infty}(X_1)$. Thus, we can find $\theta\in [\sR(\widetilde B\curvearrowright\widetilde X)]$ such that $\theta(X_0)=X_1$ and $\xi^*a\xi=a\circ\theta^{-1}$, for every $a\in \text{L}^{\infty}(X_0)$.
Moreover,  $t^*u_gp_Kt=\sum_{k\in K}\rho(k)p_1u_{gk}p_1$, for every $g\in B_1$. This implies that
\begin{equation}\label{conj}
\text{$\xi^*v_h\xi=\eta(h)\sum_{k\in K}\rho(k)p_1u_{\widehat{\delta}(h)k}p_1$, for every $h\in D$.}
\end{equation}
If $h\in D$, then $\text{Ad}(\xi^*v_h\xi)(a)=a\circ\theta\circ \beta(h)^{-1}\circ\theta^{-1}$, for every $a\in \text{L}^{\infty}(X_1)$. On the other hand, using \eqref{conj} it is easy to see that $\text{Ad}(\xi^*v_h\xi)(a)=a\circ\gamma(\delta(h))^{-1}$, for every $a\in \text{L}^{\infty}(X_1)$. We thus conclude that $\theta\circ\beta(h)=\gamma(\delta(h))\circ\theta$, for every $h\in D$, as claimed.

To prove the moreover assertion, assume that $K\not=\{1\}$ and let $g\in K\setminus\{1\}$. Let $(l_m)\subset D$ be a sequence such that $\widetilde\mu(\{x\in X_0\mid l_m\cdot x\in s(\emph{C}_B(g)\times\mathbb Z/n\mathbb Z)t\cdot x\})\rightarrow 0$, as $m\rightarrow\infty$, for all $s,t\in\widetilde B$. As in the proof of \eqref{nonembed} it follows that $\text{L}(D)\nprec_{\widetilde \M}\text{L}(\text{C}_{ B}(g))\,\overline{\otimes}\,\mathbb M_n(\mathbb C)$. Since the set $\{hgh^{-1}\mid h\in B_1\}\subset K$ is finite, $B_1\cap\text{C}_{ B}(g)<B_1$ has finite index, so  $\text{L}(D)\nprec_{\widetilde \M}\text{L}(B_1)\,\overline{\otimes}\,\mathbb M_n(\mathbb C)$. This contradicts the fact that $z^*\text{L}(D)z\subset\text{L}(B_1)p_K$.
\end{proof}

We end this section by showing that the weakly mixing condition from Theorem \ref{SOE} is automatically satisfied after passing to an ergodic component of a finite index subgroup.

\begin{lem}[\cite{Po04}]\label{wmix}
Assume the setting of Theorem \ref{SOE}.  Then there exists a finite index subgroup $ S< D$ and a $\beta(S)$-invariant non-null measurable set $Y\subset X_0$ such that $\widetilde\mu(\beta(h)(Y)\cap Y)=0$, for every $h\in D\setminus S$, and the restriction of $\beta_{| S}$ to $Y$ is weakly mixing.
\end{lem}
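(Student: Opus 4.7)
The plan is to use the maximal compact (isometric) factor of $\beta$ together with the relative property (T) of $(D,D_0)$ to isolate a finite-index subgroup $S<D$ and an $S$-invariant set $Y$ on which $\beta|_S$ is weakly mixing. First, I will analyze the Koopman decomposition $L^2(X_0) = \mathcal{H}_c \oplus \mathcal{H}_w$, where $\mathcal{H}_c$ is the closed linear span of all finite-dimensional $\beta(D)$-invariant subspaces; the corresponding $\beta(D)$-invariant von Neumann subalgebra $\mathcal{A}:=\mathcal{H}_c\cap L^\infty(X_0)$ determines a measurable factor $\pi\colon X_0\to Z$ on which $D$ acts isometrically. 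After replacing $X_0$ by an invariant positive-measure subset (via ergodic decomposition of the factor action on $Z$), I may assume $D\curvearrowright Z$ is ergodic, so that by the Halmos--von Neumann theorem $(Z,\zeta)\cong(K/L,\mathrm{Haar})$ for a compact second-countable group $K$, a closed subgroup $L<K$, and a homomorphism $\rho\colon D\to K$ with dense image acting by translation.

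The central, and most delicate, step is to use the relative property (T) of $(D,D_0)$ to conclude that $\rho(D_0)$ is finite. The Koopman representation of $D$ on $L^2(K)\ominus\mathbb{C}1$ decomposes into finite-dimensional $D$-subrepresentations by Peter--Weyl. Using relative property (T) together with the normality of $D_0\lhd D$ and the density of $\rho(D)$ in $K$, one argues that $\rho(D_0)$ acts through only finitely many distinct matrix coefficients across these subrepresentations, forcing $\rho(D_0)$ to be finite in $K$. This step is the main technical obstacle; it requires carefully leveraging the normality of $D_0$ and the denseness of $\rho(D)$ to rule out an infinite continuous image of $D_0$ in $K$, without relying on property (T) of $D_0$ itself.

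Given the finiteness of $\rho(D_0)$, which is then automatically normal in $K=\overline{\rho(D)}$, the partition of $K/L$ into $\rho(D_0)$-orbits is finite and $\rho(D)$-equivariant. I will fix such an orbit $\mathcal{O}$, pick a point $z_0\in\mathcal{O}$, and define $S:=\{g\in D:\rho(g)z_0=z_0\}$. Since $\rho(D_0)$ acts transitively on $\mathcal{O}$, the stabilizer $S$ satisfies $[D:S]\le|\mathcal{O}|\le|\rho(D_0)|<\infty$. Setting $Y:=\pi^{-1}(\{z_0\})$, the set $Y$ is $\beta(S)$-invariant, and $\widetilde\mu(\beta(h)Y\cap Y)=0$ for every $h\in D\setminus S$, since then $\rho(h)z_0\ne z_0$ and $\beta(h)Y=\pi^{-1}(\{\rho(h)z_0\})$ is disjoint from $Y$.

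Finally, to see that $\beta|_S$ acts weakly mixingly on $Y$: any finite-dimensional $\beta(S)$-invariant subspace $V\subseteq L^2(Y)\ominus\mathbb{C}1_Y$ induces, via $\widetilde V:=\bigoplus_{g\in D/S}\beta(g)V$, a finite-dimensional $\beta(D)$-invariant subspace of $L^2(X_0)$, so $\widetilde V\subseteq\mathcal{H}_c=\pi^*L^2(Z)$. Since the restriction of $\pi^*L^2(Z)$ to the single fibre $Y=\pi^{-1}(\{z_0\})$ consists only of constant functions, this forces $V\subseteq\mathbb{C}1_Y$, a contradiction. Thus $\beta|_S$ on $Y$ has trivial Kronecker factor and is therefore weakly mixing, completing the proof.
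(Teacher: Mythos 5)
Your endgame (take an atom of the ``almost periodic'' part, let $S$ be its stabilizer, get disjointness for $h\in D\setminus S$, and deduce weak mixing on the atom by inducing finite-dimensional $S$-invariant subspaces up to $D$) is exactly the shape of the paper's argument. The problem is the step you yourself flag as the ``main technical obstacle'': the claim that relative property (T) of $(D,D_0)$, normality of $D_0$, and density of $\rho(D)$ in $K$ force $\rho(D_0)$ to be finite. This is not only unproved, it is false as stated. Take $D=\mathbb Z^2\rtimes SL_2(\mathbb Z)$ with $D_0=\mathbb Z^2$ (a standard pair with relative property (T)) and $\rho\colon D\to K=\mathbb Z_p^2\rtimes SL_2(\mathbb Z_p)$ the natural map into this compact group; by strong approximation $\rho(D)$ is dense, $D_0$ is normal, yet $\rho(D_0)\cong\mathbb Z^2$ is infinite. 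Worse, the translation action of $D$ on $(K,\text{Haar})$ is free, ergodic and compact, so no restriction of a finite-index subgroup to a non-null subset is ever weakly mixing: the conclusion of the lemma is simply not a formal consequence of relative property (T) for an abstract free p.m.p.\ action. Any correct proof must use the hypotheses that tie $\beta$ to the ambient built-over-Bernoulli action $\widetilde B\curvearrowright\widetilde X$ (the orbit inclusion $D\cdot x\subset\widetilde B\cdot x$ and the conditions involving $\mathrm{Stab}_B(i)$), and your outline never invokes these at the decisive point.

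This is precisely where the paper's proof differs: it is von Neumann algebraic rather than ergodic-theoretic. It reuses the intertwining \eqref{conjugi}, namely $y^*\text{L}(D)y\subset p(\text{L}(B)\,\overline{\otimes}\,\mathbb M_n(\mathbb C))p$, which was established in the proof of Theorem \ref{SOE} from relative property (T) together with the non-embedding condition \eqref{nonembed} and Corollary \ref{rigsub} --- this is where the Bernoulli-type structure and the ICC property of $B$ enter. Given this, for any $f\in\text{L}^{\infty}(X_0)$ whose $\beta(D)$-orbit spans a finite-dimensional space one gets $y^*fy\in p(\text{L}(B)\,\overline{\otimes}\,\mathbb M_n(\mathbb C))p$ by \cite[Propositions 6.14 and 6.15]{PV06}, so the von Neumann algebra generated by all such $f$ sits inside $\text{L}^{\infty}(X_0)\cap y(\text{L}(B)\,\overline{\otimes}\,\mathbb M_n(\mathbb C))y^*$ and is therefore completely atomic; the lemma then follows by taking $Y$ to be an atom, exactly as in your last two paragraphs (finiteness of $[D:S]$ coming from measure preservation, since the orbit of an atom consists of atoms of equal positive measure). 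So the fix is to replace your Halmos--von Neumann/``$\rho(D_0)$ is finite'' step by this intertwining argument; as a secondary point, note also that your reduction to an ergodic component of the factor $Z$ is not harmless, since a compact factor action need not have any positive-measure ergodic component.
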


The proof follows from an argument Popa (see the proofs of \cite[Lemma 4.5]{Po04}, \cite[Theorem 9.1]{Va06} and \cite[Theorem 8.2]{Io10}).
For completeness, we reproduce the argument here.

\begin{proof}
Assume the notations from the proof of Theorem \ref{SOE}.
 {In particular, we recall that  $y\in\widetilde\M$ is a partial isometry such that $yy^*=p_0={\bf 1}_{X_0}$ and $y^*y=p$. Moreover, by \eqref{conjugi}, we have $y^*\text{L}(D)y\subset p(\text{L}( B)\,\overline{\otimes}\,\mathbb M_n(\mathbb C))p$.}
Let $\P_0\subset \text{L}^{\infty}(X_0)$ be the $*$-algebra of $f\in \text{L}^{\infty}(X_0)$ such that the linear span of $\{\beta(h)(f)\mid f\in D\}$ is finite dimensional. Let $\P\subset \text{L}^{\infty}(X_0)$ be the von Neumann algebra generated by $\P_0$.

Let $f\in \P_0$ and denote by $\mathcal H$ the linear span of $\{\beta(h)(f)\,|\, {h\in D}\}$. Then, $v_hf=\beta(h)(f)v_h\in\mathcal Hv_h$,  for every $h\in D$. This implies that $\text{L}(D)f\subset\mathcal H\text{L}(D)$ and thus $(y^*\text{L}(D)y)(y^*fy)\subset (y^*\mathcal Hy)(y^*\text{L}(D)y)$. Since $\mathcal H$ is finite dimensional, by using \eqref{nonembed} and \eqref{conjugi} and applying \cite[Propositions 6.14 and 6.15]{PV06} we get that $y^*fy\in p(\text{L}(B)\,\overline{\otimes}\,\mathbb M_n(\mathbb C))p$. Thus, we get that $y^*\P y\subset p(\text{L}(B)\,\overline{\otimes}\,\mathbb M_n(\mathbb C))p$ and so $\P\subset \text{L}^{\infty}(X_0)\cap y(L(B)\,\overline{\otimes}\,\mathbb M_n(\mathbb C))y^*$.

This easily implies that $\P$ is completely atomic. Let $Y\subset X_0$ be a non-null measurable set such that ${\bf 1}_{Y}$ is a minimal projection of $P$. Let $ S<D$ be the subgroup of $h\in D$ such that $\beta(h)(Y)=Y$. Then $ S$ has finite index in $D$ and
$\widetilde\mu(\beta(h)(Y)\cap Y)=0$, for every $h\in D\setminus S$. If $f\in \text{L}^{\infty}(Y)$ is such that the linear span of $\{\beta(h)(f)\mid h\in S\}$ is finite dimensional, then $f\in \P$ and hence $f\in\mathbb C{\bf 1}_Y$. This shows that the restriction of $\beta_{| S}$ to $Y$ is weakly mixing.
\end{proof}


\subsection{Solidity results for generalized Bernoulli crossed products}\label{SOL}


The second ingredient needed in the proof of Theorem \ref{superr} is the following relative solidity result which generalizes \cite[Theorem 2]{CI08}.

\begin{thm}\label{solid}
Let $m\in\mathbb N$. For $1\leq j\leq m$, let $B_j\curvearrowright I_j$ be an action such that $\emph{Stab}_{B_j}(i)$ is amenable for every $i\in I_j$. Let also
 $(\A_j,\tau)$ be an abelian tracial von Neumann algebra, $B_j\curvearrowright (\A_j^{I_j},\tau)$ be a trace preserving action built over $B_j\curvearrowright I_j$ and put $\M_j=\A_j^{I_j}\rtimes B_j$.
Denote $\A=\,\overline{\otimes}\,_{j=1}^m\A_j^{I_j}$ and $\M=\,\overline{\otimes}\,_{j=1}^m\M_j$. Let $\Q\subset p\A p$ be a von Neumann subalgebra.
Assume that $\Q\nprec_{\A}\,\overline{\otimes}\,_{j\not=k}\A_j^{I_j}$, for every $1\leq k\leq m$.

Then $\Q'\cap p\M p$ is amenable.
\end{thm}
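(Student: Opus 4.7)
The proof should extend \cite[Theorem 2]{CI08} from a single plain Bernoulli crossed product to a tensor product of generalized Bernoulli crossed products with amenable stabilizers. I would follow Popa's deformation/rigidity scheme in three steps: build an $s$-malleable deformation of $\widetilde\M \supset \M$, establish a spectral gap on $\Nn = \Q' \cap p\M p$ from the non-embedding hypothesis, and deduce amenability by untwisting, leveraging amenability of stabilizers.

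First, construct the deformation. For each $j$, dilate the abelian tracial algebra $\A_j$ to $\widetilde\A_j$ together with Popa's $s$-malleable one-parameter group $(\theta_t^j)_{t \in \mathbb R}$ and involution $\beta_j$, with $\theta_0^j = \mathrm{id}$ and $\theta_1^j(\A_j)$ orthogonal to $\A_j$ in $\text{L}^2(\widetilde\A_j)$. Apply $\theta_t^j$ coordinatewise on $\widetilde\A_j^{I_j}$. Since the trace-preserving $B_j$-action on $\A_j^{I_j}$ is built over $B_j \curvearrowright I_j$, it extends to $\widetilde\A_j^{I_j}$ and commutes with the coordinatewise $\theta_t^j$, yielding a deformation of $\widetilde\M_j = \widetilde\A_j^{I_j} \rtimes B_j$ that fixes $\text{L}(B_j)$. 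Tensoring, we obtain a malleable deformation $\theta_t = \otimes_j \theta_t^j$ of $\widetilde\M = \,\overline{\otimes}\,_j \widetilde\M_j \supset \M$ with involution $\beta = \otimes_j \beta_j$.

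Second, establish the spectral gap $\sup_{y \in (\Nn)_1} \|\theta_t(y) - y\|_2 \to 0$ as $t \to 0$. Decompose $\text{L}^2(\widetilde\M) \ominus \text{L}^2(\M)$ into a direct sum of $\M$-sub-bimodules indexed by finitely supported Fourier data in the dilated coordinates. A sub-bimodule whose support lies entirely in the $k$-th tensor factor is weakly contained in $\text{L}^2(\M) \otimes_{\B_k} \text{L}^2(\M)$ for $\B_k := \,\overline{\otimes}\,_{j \neq k} \A_j^{I_j}$, since the remaining $\B_k$ acts by scalars on that Fourier data from both sides. The hypothesis $\Q \not\prec_\A \B_k$ provides, via Theorem \ref{corner}, a sequence $(u_n) \subset \sU(\Q)$ with $\|E_{\B_k}(x u_n y)\|_2 \to 0$ for all $x,y \in \A$; standard averaging then shows that such a sub-bimodule admits no nonzero $\Q$-central vectors in the required sense. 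Popa's spectral gap principle yields the desired uniform convergence on $(\Nn)_1$.

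Finally, untwist to derive amenability. Combining the spectral gap with Popa's transversality inequality for the $s$-malleable deformation, for small $t > 0$ the map $\beta \circ \theta_t$ sends $(\Nn)_1$ close to $\M$ inside $\widetilde\M$; a standard untwisting argument as in \cite{CI08} then produces a normal $\M$-bimodular unital completely positive map from $\Nn$ into $\langle \M, e_\C \rangle$, where $\C$ is a subalgebra of $\M$ of the form $\,\overline{\otimes}\,_j (\A_j^{I_j} \rtimes \text{Stab}_{B_j}(i_j))$ for some choice of $i_j \in I_j$. Since each stabilizer is amenable and each $\A_j^{I_j}$ is abelian, $\C$ is amenable, hence so is $\Nn$. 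The main obstacle is this last step: correctly identifying the amenable ``residual'' subalgebra $\C$ after untwisting and verifying that the sub-bimodule structure arising from the tensor product of $m$ factors with amenable stabilizers is indeed amenably controlled. This is where both hypotheses---amenability of stabilizers and non-embedding of $\Q$ into each $\B_k$---must be used simultaneously.
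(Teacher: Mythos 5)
Your Step 2 is where the argument breaks, both in its logic and in its intended conclusion. Write $\widehat{\A}_k=\,\overline{\otimes}\,_{j\neq k}\A_j^{I_j}$ and $\widehat{\M}_k=\,\overline{\otimes}\,_{j\neq k}\M_j$. Since the $\A_j$ are abelian and $p\in\A$, we have $p\A p\subset\Nn=\Q'\cap p\M p$; but no malleable deformation of Bernoulli type converges uniformly on the unit ball of the diffuse core $\A$ (a unitary of tensor length $n$ is moved almost maximally by $\theta_t$ for any fixed $t>0$). So the statement you aim for, $\sup_{y\in(\Nn)_1}\|\theta_t(y)-y\|_2\to 0$, is simply false in the cases of interest, and cannot be the output of any correct argument. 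Moreover, the mechanism you invoke is inverted: the hypothesis $\Q\nprec_{\A}\widehat{\A}_k$, via averaging over unitaries $u_n\in\sU(\Q)$ with $\|E_{\widehat{\A}_k}(xu_ny)\|_2\to0$, only excludes (almost) $\Q$-central vectors with trace control, i.e.\ it rules out intertwining/relative-amenability statements for $\Q$; it does not produce a spectral gap. Since $\Q$ is abelian, hence amenable, there is no gap to be extracted from $\Q$ at all. In Popa's scheme the uniform convergence of $\theta_t$ comes from an \emph{assumed relative nonamenability of} $\Nn$, and is then obtained on the unit ball of the commutant of $\Nn$ (which contains $\Q$), not on $(\Nn)_1$.

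Two further defects, and the route that works. First, the weak containment in Step 2 is misidentified: the piece of $\text{L}^2(\widetilde\M)\ominus\text{L}^2(\M)$ supported in the $k$-th factor is $\bigl(\text{L}^2(\widetilde\M_k)\ominus\text{L}^2(\M_k)\bigr)\otimes\text{L}^2(\widehat{\M}_k)$, on which $\widehat{\M}_k$ acts by left/right multiplication; using that $\A_k$ is abelian and the stabilizers are amenable it is weakly contained in $\text{L}^2(\M)\otimes_{\widehat{\M}_k}\text{L}^2(\M)$, but not in $\text{L}^2(\M)\otimes_{\B_k}\text{L}^2(\M)$ with your abelian $\B_k$ --- that would force $\widehat{\M}_k$ to be amenable relative to $\widehat{\A}_k$, i.e.\ the groups $B_j$, $j\neq k$, to be amenable. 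Second, even granting uniform convergence, the untwisting you describe yields an intertwining $\Nn\prec\C$ into an amenable algebra, which only gives an amenable corner of $\Nn$, not amenability; one needs relative amenability of $\Nn$ over an amenable subalgebra, which your Step 3 does not produce. The correct architecture runs in the opposite direction, by contradiction: if $\R=\Q'\cap p\M p$ is nonamenable, then since the $\widehat{\M}_k$ are in commuting square position with trivial intersection, \cite[Proposition 2.7]{PV11} together with \cite[Lemma 2.6]{DHI16} gives some $k$ and a corner on which $\R$ is strongly nonamenable relative to $\widehat{\M}_k$; a rigidity theorem of IPV10 type for actions built over $B_k\curvearrowright I_k$ with amenable stabilizers (Corollary \ref{rigsub2}, proved with the tensor-length deformation and the dilation $\widetilde\A=\A*\text{L}(\mathbb Z)$, which --- unlike the coordinatewise dilation you propose --- is automatically equivariant for built-over actions, a point your construction glosses over) then forces $\Q\prec_{\M}\widehat{\M}_k$; finally, since $\Q\subset\A$, a short conditional-expectation computation shows that $\Q\nprec_{\A}\widehat{\A}_k$ implies $\Q\nprec_{\M}\widehat{\M}_k$, giving the contradiction. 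This is where the two hypotheses are genuinely used, and it is essentially the reverse flow of information from your Steps 2--3.
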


To prove Theorem \ref{solid} we rely on a corollary of \cite[Theorem 4.2 and Corollary 4.3]{IPV10}:

\begin{cor}[\cite{IPV10}]\label{rigsub2}
Let $B\curvearrowright I$ be an action such that  $\emph{Stab}_B(i)$ is amenable for every $i\in I$.  Let $(\A,\tau)$ be an abelian tracial von Neumann algebra and $B\curvearrowright^\sigma (\A^I,\tau)$ be a trace preserving action built over $B\curvearrowright I$.
Denote $\M=\A^I\rtimes B$ and let $(\Nn,\tau)$ be a tracial von Neumann algebra.
Let $\Q\subset p(\A^I\,\overline{\otimes}\,\Nn)p$ be a von Neumann subalgebra such that $\Q'\cap p(\M\,\overline{\otimes}\,\Nn)p$ is strongly nonamenable relative to $1\,\overline{\otimes}\,\Nn$ inside $\M\,\overline{\otimes}\,\Nn$.

Then $\Q\prec_{\A^I\,\overline{\otimes}\,\Nn}1\,\overline{\otimes}\,\Nn$.
\end{cor}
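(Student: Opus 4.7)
The plan is a proof by contradiction via the deformation/rigidity machinery behind \cite[Theorem 4.2 and Corollary 4.3]{IPV10}, with strong non-amenability of the normalizer playing the role that relative property (T) plays there. Set $\P := \Q'\cap p(\M\,\overline{\otimes}\,\Nn)p$ and suppose $\Q\nprec_{\A^I\,\overline{\otimes}\,\Nn}1\,\overline{\otimes}\,\Nn$. By assumption, $\P$ is strongly non-amenable relative to $1\,\overline{\otimes}\,\Nn$ inside $\M\,\overline{\otimes}\,\Nn$.

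First I would rerun the spectral gap plus malleable deformation argument that underlies \cite[Theorem 4.2]{IPV10}, applied to Popa's Gaussian-type malleable deformation $(\alpha_t)$ of $\A^I\rtimes B$ extended trivially on the $\Nn$ factor. The hypothesis $\Q\nprec 1\,\overline{\otimes}\,\Nn$ prevents $(\alpha_t)$ from converging uniformly on $\Q$, and in the original argument relative property (T) is used to convert this failure into a rigid location statement. Here, the strong non-amenability of $\P$ relative to $1\,\overline{\otimes}\,\Nn$ provides the requisite spectral gap in the style of \cite{OP07,CI08}. The output of the combined argument is a nonzero projection $p_0\in\sZ(\P)$ and a (possibly empty) finite subset $\mathcal F\subseteq I$ such that $\P p_0$ is amenable relative to $\sM_\mathcal F:=(\A^\mathcal F\rtimes\text{Stab}_B(\mathcal F))\,\overline{\otimes}\,\Nn$ inside $\M\,\overline{\otimes}\,\Nn$, where $\text{Stab}_B(\mathcal F)$ is the pointwise stabilizer. (In \cite[Theorem 4.2]{IPV10} this relative amenability is subsequently upgraded via property (T) to an actual intertwining; here that upgrade is neither available nor needed.) A short separate argument, as at the start of the proof of Corollary \ref{rigsub}, handles the case $\mathcal F=\emptyset$: it gives $\P p_0\prec\text{L}(B)\,\overline{\otimes}\,\Nn$, from which one extracts $\Q\prec 1\,\overline{\otimes}\,\Nn$, contradicting the standing assumption.

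Second, the amenability of stabilizers enters. For any nonempty finite $\mathcal F\subseteq I$ and any $i\in\mathcal F$, the group $\text{Stab}_B(\mathcal F)$ is a subgroup of the amenable group $\text{Stab}_B(i)$, hence amenable. Since $\A$ is abelian, the algebra $\A^\mathcal F\rtimes\text{Stab}_B(\mathcal F)$ is amenable, and therefore $\sM_\mathcal F$ is amenable relative to $1\,\overline{\otimes}\,\Nn$ inside $\M\,\overline{\otimes}\,\Nn$. By transitivity of relative amenability, $\P p_0$ is amenable relative to $1\,\overline{\otimes}\,\Nn$, contradicting the strong non-amenability of $\P$ relative to $1\,\overline{\otimes}\,\Nn$.

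The main obstacle is the first step: isolating from \cite[Theorem 4.2]{IPV10} the intermediate relative amenability conclusion for the normalizer and rederiving it with strong non-amenability of $\P$ as the rigidity input in place of relative property (T) of $\Q$. This type of substitution is a standard Popa-style spectral gap move (compare \cite{CI08,OP07}), but the details must be carried out for trace preserving actions built over $B\curvearrowright I$ rather than just for classical generalized Bernoulli actions. Once that step is in place, the amenability of the stabilizer subalgebras and transitivity of relative amenability deliver the contradiction immediately.
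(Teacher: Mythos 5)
Your proposal assembles the right ingredients (spectral gap from the strong nonamenability hypothesis, the machinery of \cite[Theorem 4.2 and Corollary 4.3]{IPV10}, amenability of $\A^{\mathcal F}\rtimes\text{Stab}_B(\mathcal F)$ for $\A$ abelian with amenable stabilizers, transitivity of relative amenability), but the logic of your first step is inverted and the intermediate statement you rely on is not one that machinery produces. In the correct argument, the strong nonamenability of $\Q'\cap p(\M\,\overline{\otimes}\,\Nn)p$ relative to $1\,\overline{\otimes}\,\Nn$ is precisely what \emph{yields} the uniform bound $(\star)$ for the tensor length deformation on $\sU(\Q)$: one first notes that the normalizer $\sN_{p(\M\overline{\otimes}\Nn)p}(\Q)''$ is strongly nonamenable relative to each $(\A^I\rtimes\text{Stab}_B(i))\,\overline{\otimes}\,\Nn$ (by \cite[Proposition 2.4]{OP07} and amenability of these algebras relative to $1\,\overline{\otimes}\,\Nn$), hence does not embed into them, and then spectral gap gives $(\star)$. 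It is not the case that $\Q\nprec_{\A^I\overline{\otimes}\Nn}1\,\overline{\otimes}\,\Nn$ ``prevents uniform convergence''; uniform convergence is compatible with $\Q$ sitting in $\A^{\mathcal F}\,\overline{\otimes}\,\Nn$. With $(\star)$ in hand, the proof of \cite[Theorem 4.2]{IPV10} gives an \emph{intertwining} dichotomy for $\Q$ itself, namely $\Q\prec_{\M\overline{\otimes}\Nn}(\A^{\mathcal F}\rtimes\text{Stab}_B(\mathcal F))\,\overline{\otimes}\,\Nn$ for some finite nonempty $\mathcal F$, or $\Q\prec_{\M\overline{\otimes}\Nn}\text{L}(B)\,\overline{\otimes}\,\Nn$ --- not the relative amenability of a corner of $\P$ over $\sM_{\mathcal F}$ that you posit and attribute to a nonexistent intermediate step of \cite{IPV10}. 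Since $\Q\subset\A^I\,\overline{\otimes}\,\Nn$, the second alternative directly gives the desired conclusion (so the argument is not a global proof by contradiction), and the first gives $\Q\prec_{\A^I\overline{\otimes}\Nn}\A^{\mathcal F}\,\overline{\otimes}\,\Nn$.

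This leaves two genuine gaps in your write-up. First, your handling of $\mathcal F=\emptyset$ is a non sequitur: relative amenability (or even intertwining) of $\P p_0$ into $\text{L}(B)\,\overline{\otimes}\,\Nn$ does not let you ``extract'' $\Q\prec 1\,\overline{\otimes}\,\Nn$; the correct statement in that branch concerns $\Q$, not $\P$, and uses that $E_{\text{L}(B)\overline{\otimes}\Nn}$ restricted to $\A^I\,\overline{\otimes}\,\Nn$ is $E_{1\overline{\otimes}\Nn}$. Second, and more seriously, you are missing the step that carries the weight in the remaining case: if $\Q\prec_{\A^I\overline{\otimes}\Nn}\A^{\mathcal F}\,\overline{\otimes}\,\Nn$ with $\mathcal F$ nonempty but $\Q\nprec_{\A^I\overline{\otimes}\Nn}1\,\overline{\otimes}\,\Nn$, one must upgrade this, as in the first paragraph of page 250 of \cite{IPV10}, to an intertwining of the normalizer, $\sN_{p(\M\overline{\otimes}\Nn)p}(\Q)''\prec_{\M\overline{\otimes}\Nn}(\A^I\rtimes\text{Stab}_B(i))\,\overline{\otimes}\,\Nn$ for some $i$, which contradicts the nonembedding derived from strong nonamenability. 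Your substitute --- transitivity of relative amenability --- has nothing to act on, since no relative amenability of $\P$ over $\sM_{\mathcal F}$ has been established. Finally, you flag but do not address the passage from genuine generalized Bernoulli actions to actions merely built over $B\curvearrowright I$; the paper does this via a free product trick ($\widetilde\A=\A*\text{L}(\mathbb Z)$) together with a direct computation showing the deformation bimodule is weakly contained in the coarse bimodule, which is exactly where abelianness of $\A$ and amenability of the stabilizers enter a second time.
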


\begin{proof} Let $\P=\mathcal N_{p(\M\,\overline{\otimes}\,\Nn)p}(\Q)''$.
As $\P$ contains $\Q'\cap p(\M\,\overline{\otimes}\,\Nn)p$, it is strongly nonamenable relative to $1\,\overline{\otimes}\,\Nn$.  Let $i\in I$.
Since $\A$ is abelian and $\text{Stab}_B(i)$ is amenable,  $\A^I\rtimes\text{Stab}_B(i)$ is amenable and thus $(\A^I\rtimes\text{Stab}_B(i))\,\overline{\otimes}\,\Nn$ is amenable relative to $1\,\overline{\otimes}\,\Nn$.
By \cite[Proposition 2.4]{OP07} we derive that $\P$ is strongly nonamenable relative to $(\A^I\rtimes\text{Stab}_B(i))\,\overline{\otimes}\,\Nn$. In particular, using \cite[Lemmas 2.4 and 2.6]{DHI16}, we get that $\P\nprec_{\M\,\overline{\otimes}\,\Nn}(\A^I\rtimes\text{Stab}_B(i))\,\overline{\otimes}\,\Nn$.

Assume that $\sigma$ is the generalized Bernoulli action associated to $B\curvearrowright I$. Then the proof of \cite[Corollary 4.3]{IPV10} shows that $(\star)$  {$\tau(u^*(\theta_\rho\otimes\text{id})(u))\geq\delta$}, for all $u\in\sU(\Q)$, for some $\rho\in (0,1)$ and $\delta>0$, where $(\theta_\rho)_{\rho\in (0,1)}$ is the tensor length deformation of $\M$. Using $(\star)$,
the proof of \cite[Theorem 4.2]{IPV10} shows that (a) $\Q\prec_{\M\,\overline{\otimes}\,\Nn}(\A^{\mathcal F}\rtimes\text{Stab}_B(\mathcal F))\,\overline{\otimes}\,\Nn$, for a finite nonempty set $\mathcal F\subset I$, or (b) $\Q\prec_{\M\,\overline{\otimes}\,\Nn}\text{L}(B)\,\overline{\otimes}\,\Nn$. Since $\Q\subset\A^I\,\overline{\otimes}\,\Nn$, (a) implies that (c) $\Q\prec_{\A^I\,\overline{\otimes}\,\Nn}\A^{\mathcal F}\,\overline{\otimes}\,\Nn$, for a finite  nonempty set $\mathcal F\subset I$, and (b) implies that (d) $\Q\prec_{\A^I\,\overline{\otimes}\,\Nn}1\,\overline{\otimes}\,\Nn$. If (d) holds, then we have the desired conclusion. Otherwise, if (c) holds but (d) fails, then arguing as in the proof of \cite[Theorem 4.2]{IPV10} (first paragraph of page 250) shows that $\P\prec_{\M\,\overline{\otimes}\,\Nn}(\A^I\rtimes\text{Stab}_B(i))\,\overline{\otimes}\,\Nn$, for some $i\in I$, which gives a contradiction.

In general, assume that $\sigma$ is built over $B\curvearrowright I$.
Let $\widetilde\A=\A*\text{L}(\mathbb Z)$. Define  the action $B\curvearrowright^{\widetilde\sigma}\widetilde\A^I$ built over $B\curvearrowright I$ whose restriction to $\A^I$ is $\sigma$  and whose restriction to $\text{L}(\mathbb Z)^I$ is the generalized Bernoulli action associated to $B\curvearrowright I$. Let $\widetilde\M=\widetilde\A^I\rtimes B$.
We claim that the  $\M$-bimodule $\mathcal H=\text{L}^2(\widetilde\M)\ominus\text{L}^2(\M)$ is weakly contained in the coarse $\M$-bimodule, $\text{L}^2(\M)\otimes\text{L}^2(\M)$.
Assuming the claim, the proof of \cite[Corollary 4.3]{IPV10} shows that $(\star)$ holds. Then the above proof extends verbatim from generalized Bernoulli actions to actions built over $B\curvearrowright I$ to give the conclusion.

To justify the claim, let  $u$  be the canonical generating unitary of $\text{L}(\mathbb Z)$. Let $\mathcal V$ be the set of unit vectors $\xi\in \widetilde\A$ of the form $\xi=u^{n_1}a_1u^{n_2}\cdots a_{k-1}u^{n_k}$, where $n_1,\cdots, n_k\in\mathbb Z\setminus\{0\}$ and $a_1,\cdots, a_{k-1}\in\A\ominus\mathbb C1$.  Let $\mathcal U$ be the set of $\eta\in\widetilde\A^I$ of the form $\eta=\Big(\bigotimes_{i\in F}\xi_i\Big)\otimes \Big(\bigotimes_{i\in I\setminus F}1\Big)$, where $F\subset I$ is finite nonempty and $(\xi_i)_{i\in F}\subset\mathcal V$. Then  for all $a,b\in \A^I, g,h\in B$ we have $$\langle au_g\eta bu_h,\eta\rangle=\tau(\widetilde\sigma_g(\eta)\eta^*)\tau(\text{E}_{\A^{I\setminus F}\rtimes\text{Stab}_B(F)}(au_g)bu_h).$$ This implies that the $\M$-bimodule $\overline{\M\eta\M}$ is a subbimodule of $\langle\M,e_{\A^{I\setminus F}\rtimes\text{Stab}_B(F)}\rangle\otimes_\M \mathcal K$, where $\mathcal K$ is the $\M$-bimodule associated to the unital completely positive map on $\M$ given by {$au_g\mapsto \tau(\widetilde\sigma_g(\eta)\eta^*)au_g$}. Since $\A$ is abelian and $\text{Stab}_B(F)$ is amenable,  $\A^{I\setminus F}\rtimes\text{Stab}_B(F)$  is amenable and thus $\overline{\M\eta\M}$  is weakly contained in the coarse $\M$-bimodule.  Since $\mathcal H$ is isomorphic to an $\M$-subbimodule of $\bigoplus_{\eta\in\mathcal U}\overline{\M\eta\M}$, the claim follows.
\end{proof}

\begin{proof}[Proof of Theorem \ref{solid}]
Assume  that $\R=\Q'\cap p\M p$ is not amenable. For $1\leq k\leq m$, let $\widehat{\A}_k=\,\overline{\otimes}\,_{j\not=k}\A_j^{I_j}$ and $\widehat{\M}_k=\,\overline{\otimes}\,_{j\not=k}\M_j$. Then the algebras $(\widehat{\M}_k)_{1\leq k\leq m}$ are in a commuting square position and we have $\cap_{k=1}^m\widehat{\M}_k=\mathbb C1$.
By \cite[Proposition 2.7]{PV11}, there is $1\leq k\leq m$ such that $\R$ is not amenable relative to $\widehat{\M}_k$.  Using \cite[Lemma 2.6(2)]{DHI16} we find a nonzero projection $p_0\in\sZ(\R'\cap p\M p)\subset\sZ(\R)$ such that $\R p_0=(\Q p_0)'\cap p_0\M p_0$ is strongly nonamenable relative to $\widehat{\M}_k$.  By applying Corollary \ref{rigsub2} we deduce that
$\Q \prec_{\M} \widehat{\M}_k$.

On the other hand, since $\Q \nprec_{\A} \widehat{\A}_k$, we can find a sequence of unitaries $u_n\in\sU(\Q)$ such that $\|\text{E}_{\widehat{\A}_k}(a^*u_nb)\|_2\rightarrow 0$, for all $a,b\in\A$.
We claim that \begin{equation}\label{noemb}\text{$\|\text{E}_{\widehat{\M}_k}(x^*u_ny)\|_2\rightarrow 0$, for all $x,y\in\M.$}\end{equation}
To prove \eqref{noemb}, we may assume that $x,y\in \M_k$ and moreover that $x=au_g,y=bu_h$, for some $a,b\in \A_k^{I_k}$ and $g,h\in B_k$. Then since $u_n\in\Q\subset\A$,  for all $n\in\mathbb N$, we have $$\|\text{E}_{\widehat{\M}_k}(x^*u_ny)\|_2=\delta_{g,h}\|\text{E}_{\widehat{\M}_k}(a^*u_nb)\|_2=\delta_{g,h}\|\text{E}_{\widehat{\A}_k}(a^*u_nb)\|_2\rightarrow 0.$$
This proves \eqref{noemb}, which contradicts that $\Q \prec_{\M} \widehat{\M}_k$ and finishes the proof.
 \end{proof}

In the proof of Theorem \ref{superr} we will in fact need the following corollary of Theorem \ref{solid}.

\begin{cor}\label{solidity}
Let $m\in\mathbb N$. For $1\leq j\leq m$, let $G_j\in\mathcal W\mathcal R(A_j,B_j\curvearrowright I_j)$, where $A_j$ is an abelian group and
$B_j\curvearrowright I_j$ an action such that $\emph{Stab}_{B_j}(i)$ is amenable for every $i\in I_j$ and $\{i\in I_j\mid g\cdot i\not=i\}$ is infinite for every $g\in B_j\setminus\{1\}$.
Define $G=\oplus_{j=1}^mG_j$ and $A=\oplus_{j=1}^mA_j^{(I_j)}$.
Let $\Q\subset  p(\emph{L}(G)\,\overline{\otimes}\,\mathbb M_n(\mathbb C)) p$ be a von Neumann subalgebra such that $\Q\prec^{\emph{s}}_{\emph{L}(G)\,\overline{\otimes}\,\mathbb M_n(\mathbb C)}\emph{L}(A)\,\overline{\otimes}\,\mathbb M_n(\mathbb C)$ and $ \Q\nprec_{\emph{L}(G)\,\overline{\otimes}\,\mathbb M_n(\mathbb C)} \emph{L}(\oplus_{j\not=k}A_j^{{(I_j)}})\,\overline{\otimes}\,\mathbb M_n(\mathbb C)$, for all $1\leq k\leq m$.

Then $\Q'\cap p(\emph{L}(G)\,\overline{\otimes}\,\mathbb M_n(\mathbb C))p$ is amenable.
\end{cor}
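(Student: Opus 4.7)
The plan is to reduce Corollary \ref{solidity} to Theorem \ref{solid}, suitably extended to the twisted crossed product realization of each $\text{L}(G_j)$, by using the strong intertwining hypothesis $\Q\prec^{\text{s}}_{\M}\text{L}(A)\,\overline{\otimes}\,\mathbb M_n$ to transport $\Q$ (via a partial-isometry conjugation) into the abelian algebra $\text{L}(A)\,\overline{\otimes}\,\mathbb M_n$ up to amplification.

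Since each $A_j$ is abelian, the conjugation action of $G_j$ on $A_j^{(I_j)}$ factors through $\varepsilon_j\colon G_j\to B_j$ and yields a trace preserving action $B_j\curvearrowright \text{L}(A_j^{(I_j)})=\text{L}(A_j)^{I_j}$, which by Remark \ref{bernoulli} is built over $B_j\curvearrowright I_j$. A set-theoretic section of $\varepsilon_j$ identifies $\text{L}(G_j)$ with a twisted crossed product $\text{L}(A_j^{(I_j)})\rtimes^{c_j} B_j$ for some $2$-cocycle $c_j$ with values in the abelian unitary group of $\text{L}(A_j^{(I_j)})$. An examination of the proofs of Theorem \ref{solid} and Corollary \ref{rigsub2} shows that every tool used --- the Popa--Vaes tensor length deformation on the built-over action $B_j\curvearrowright \text{L}(A_j)^{I_j}$, the weak-containment computation for the bimodule $\text{L}^2(\widetilde{\M})\ominus\text{L}^2(\M)$ appearing in the proof of Corollary \ref{rigsub2}, and the commuting-square and intertwining manipulations --- depends only on the built-over structure of the action and the normal-subalgebra position of $\text{L}(A_j^{(I_j)})$ inside $\text{L}(G_j)$, and is unaffected by the cocycle twist. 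Consequently, Theorem \ref{solid} and Corollary \ref{rigsub2} remain valid verbatim with each $\M_j=\A_j^{I_j}\rtimes B_j$ replaced by $\M_j=\text{L}(G_j)$, and their proofs further accommodate an additional tracial amplification factor $\Nn$ throughout (in particular $\Nn=\mathbb M_N$).

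Write $\M=\text{L}(G)\,\overline{\otimes}\,\mathbb M_n$ and $\A=\text{L}(A)\,\overline{\otimes}\,\mathbb M_n$. By the standard consequence of strong intertwining (Theorem \ref{corner} combined with the maximality/amplification argument of \cite[Lemma 2.4]{DHI16}), possibly after cutting by a central projection in $\Q'\cap p\M p$, there exist an integer $N$, a projection $q\in\A\,\overline{\otimes}\,\mathbb M_N$, a partial isometry $v\in\M\,\overline{\otimes}\,\mathbb M_{1,N}$ with $vv^*=p$ and $v^*v=q$, and a unital $*$-embedding $\theta\colon\Q\to q(\A\,\overline{\otimes}\,\mathbb M_N)q$ satisfying $vx=\theta(x)v$ for every $x\in\Q$. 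Conjugation by $v$ is a trace preserving $*$-isomorphism carrying $\Q'\cap p\M p$ onto $\theta(\Q)'\cap q(\M\,\overline{\otimes}\,\mathbb M_N)q$, so it suffices to establish amenability of the latter.

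Set $\widetilde{\Q}=\theta(\Q)\subset q(\A\,\overline{\otimes}\,\mathbb M_N)q$. The assumption $\Q\nprec_{\M}\text{L}\bigl(\oplus_{j\neq k}A_j^{(I_j)}\bigr)\,\overline{\otimes}\,\mathbb M_n$ transfers through the partial isometry $v$ to $\widetilde{\Q}\nprec_{\A\,\overline{\otimes}\,\mathbb M_N}\text{L}\bigl(\oplus_{j\neq k}A_j^{(I_j)}\bigr)\,\overline{\otimes}\,\mathbb M_{nN}$ for each $1\leq k\leq m$. Applying the amplified twisted version of Theorem \ref{solid} to $\widetilde{\Q}$ (with $\mathbb M_{nN}$ as the final tracial factor) then yields amenability of $\widetilde{\Q}'\cap q(\M\,\overline{\otimes}\,\mathbb M_N)q$, completing the argument. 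The principal technical burden is verifying the twisted and amplified extensions of Theorem \ref{solid} and Corollary \ref{rigsub2}, which is routine given that the underlying Popa--Vaes deformation-rigidity machinery is insensitive to both $2$-cocycle twists and to tensoring with amenable tracial algebras.
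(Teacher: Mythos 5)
Your opening reduction (using $\Q\prec^{\text{s}}\text{L}(A)\,\overline{\otimes}\,\mathbb M_n(\mathbb C)$ to move $\Q$ into the abelian algebra up to amplification, then patching over central projections) is essentially the ``standard argument'' the paper leaves to the reader, and is fine modulo sloppiness: conjugation by $v$ does not carry $\Q'\cap p\M p$ \emph{onto} $\theta(\Q)'\cap q(\M\,\overline{\otimes}\,\mathbb M_N(\mathbb C))q$, it only identifies a corner cut by $vv^*\in\Q'\cap p\M p$, and the transfer of the non-intertwining hypotheses through $v$ also needs a (routine) argument. The genuine gap is the next step: the assertion that Theorem \ref{solid} and Corollary \ref{rigsub2} ``remain valid verbatim'' with $\M_j=\A_j^{I_j}\rtimes B_j$ replaced by $\text{L}(G_j)$, i.e.\ with the untwisted crossed product replaced by the cocycle crossed product of the wreath-like group. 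This is not routine, and in the cases this paper actually needs it is false. The proofs of Theorem \ref{solid} and Corollary \ref{rigsub2} rest on the tensor-length deformation of $\widetilde\A^I\rtimes B$ (with $\widetilde\A=\A*\text{L}(\mathbb Z)$), which is implemented by transformations of the coefficient algebra that do not fix $\A^I$ pointwise; since the $2$-cocycle of the extension $1\to A^{(I)}\to G\to B\to 1$ takes values in $A^{(I)}\subset\sU(\A^I)$ rather than in $\mathbb T$, these transformations do not respect the cocycle and hence do not define maps of the twisted algebra $\text{L}(G_j)$ at all. More decisively, when $G_j$ has property (T) (exactly the situation in which Corollary \ref{solidity} is invoked in the proof of Theorem \ref{superr}), $\text{L}(G_j)$ has property (T), so any family of trace-preserving u.c.p.\ maps converging pointwise in $\|\cdot\|_2$ to the identity must converge uniformly on the unit ball; but a tensor-length-type deformation satisfies $\|\theta_\rho(u_a)-u_a\|_2=1-\rho^{|a|}\to 1$ along group unitaries $u_a$, $a\in A^{(I)}$, of large support, so no analogue of the deformation can exist on $\text{L}(G_j)$. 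Your proposal thus assumes away the central difficulty that the wreath-like (non-split) structure creates.

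The paper's proof takes a different and essentially unavoidable route: it applies Theorem \ref{solid} only to the \emph{untwisted} algebra $\text{L}(H)$, where $H=A\rtimes B$ is the genuine semidirect product built from the conjugation action, obtaining amenability of $\Q'\cap p\text{L}(H)p$, and then transfers this to $\Q'\cap p\text{L}(G)p$ via Lemma \ref{transfer}. The point of that lemma is that $\text{L}(A)$ is a Cartan subalgebra of both $\text{L}(G)$ and $\text{L}(H)$ with the \emph{same} equivalence relation $\sR(B\curvearrowright\widehat A)$ (this is where the hypothesis that $\{i\in I_j\mid g\cdot i\neq i\}$ is infinite enters), so for $\Q\subset p\text{L}(A)p$ the two relative commutants are $\text{L}(\sS)$ and $\text{L}_d(\sS)$ for one and the same subequivalence relation $\sS$, differing only by a $2$-cocycle $d$; amenability of $\text{L}(\sS)$ gives hyperfiniteness of $\sS$ by Connes--Feldman--Weiss, and hence amenability of $\text{L}_d(\sS)$ regardless of $d$. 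If you want to salvage your argument, you should replace the claimed ``twisted extension'' of Theorem \ref{solid} by this transfer mechanism (or prove such an extension honestly, which would require new input rather than a verbatim rerun of the deformation argument).
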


Corollary \ref{solidity} is obtained by combining Theorem \ref{solid} with the following ``transfer" lemma.

\begin{lem}\label{transfer}
Let $A$ be a normal abelian subgroup of a countable group $G$. Assume that $\{aga^{-1}\mid a\in A\}$ is infinite, for every $g\in G\setminus A$.  Consider the action of $G/A$ on $A$ by conjugation: $g\cdot a=\widehat{g}a\widehat{g}^{-1}$, for every $g\in G/A$ and $a\in A$, where $\varepsilon\colon G\rightarrow G/A$ denotes the quotient homomorphism and $\widehat{g}\in G$ is any element such that $\varepsilon(\widehat{g})=g$. Define the semidirect product group $H=A\rtimes G/A$.
Let $\Q\subset p\emph{L}(A)p$ be a von Neumann subalgebra.

If $\Q'\cap p\emph{L}(H)p$ is amenable, then $\Q'\cap p\emph{L}(G)p$ is amenable.
\end{lem}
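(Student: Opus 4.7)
The plan is to realize both $\text{L}(G)$ and $\text{L}(H)$ as twisted Feldman-Moore algebras of a common orbit equivalence relation, so that the commutants of $\Q$ inside them correspond to the same sub-equivalence relation twisted by different 2-cocycles, and then to transfer amenability using the fact that amenability of an equivalence relation algebra is independent of the 2-cocycle.

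First, I would verify that $\A := \text{L}(A)$ is a Cartan subalgebra of both $\text{L}(G)$ and $\text{L}(H)$. Pontryagin duality identifies $\A \cong \text{L}^\infty(\widehat A)$, and the conjugation action of $B := G/A$ on $\A$ becomes the measure-preserving dual action $B\curvearrowright \widehat A$. If $g\in G\setminus A$ has $\varepsilon(g)=b$, then $\{aga^{-1}:a\in A\}=(1-\sigma_b)(A)\cdot g$, so the hypothesis forces $(1-\sigma_b)(A)$ to be an infinite subgroup of $A$; its annihilator in $\widehat A$ coincides with $\mathrm{Fix}(\sigma_b^*)$ and is a closed non-open subgroup of the compact group $\widehat A$, hence has Haar measure zero. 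Thus $B\curvearrowright \widehat A$ is essentially free, and a routine Fourier-expansion argument shows $\A$ is maximal abelian in both $\text{L}(G)$ and $\text{L}(H)$; normality of $A$ gives regularity, so $\A$ is Cartan in each.

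Next, let $\sR$ be the orbit equivalence relation of $B\curvearrowright \widehat A$. Feldman-Moore gives $\text{L}(H)=\text{L}(\sR)$ with trivial $\mathbb T$-valued cocycle, and $\text{L}(G)=\text{L}_{\widetilde c}(\sR)$, where $\widetilde c\in Z^2(\sR,\mathbb T)$ is induced pointwise by the group 2-cocycle $c\colon B\times B\to A$ of $1\to A\to G\to B\to 1$ via $\widetilde c(\theta_{b_2},\theta_{b_1})(\chi)=\chi(c(b_2,b_1))$. The abelian algebra $\Q\subset p\A p$ corresponds to an equivalence relation $\sim_\Q$ on $X_p:=\mathrm{supp}(p)\subset\widehat A$ whose classes are $\Q$-atoms; setting $\sR_\Q:=\sR|_{X_p}\cap\sim_\Q$, a Fourier-expansion computation (writing $x=\sum_b a_b u_b$ and noting that $[x,\Q]=0$ iff each $a_b$ is supported where $\sigma_b$ preserves the $\Q$-partition) yields
\[
\Q'\cap p\text{L}(G)p \;=\; \text{L}_{\widetilde c}(\sR_\Q), \qquad \Q'\cap p\text{L}(H)p \;=\; \text{L}(\sR_\Q),
\]
under the Feldman-Moore identification (the first cocycle understood as the restriction of $\widetilde c$ to $\sR_\Q$).

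The conclusion then follows from the standard fact that amenability of a twisted Feldman-Moore algebra depends only on the equivalence relation: $\text{L}_\omega(\sS)$ appears as a direct summand of $\text{L}(\widetilde\sS)$ for the $\mathbb T$-bundle extension $\widetilde\sS$ of $\sS$ determined by $\omega$, and $\widetilde\sS$ is amenable iff $\sS$ is, so by Connes-Feldman-Weiss $\text{L}(\sS)$ and $\text{L}_\omega(\sS)$ are simultaneously amenable. Applied to $\sR_\Q$, amenability of $\Q'\cap p\text{L}(H)p=\text{L}(\sR_\Q)$ forces $\sR_\Q$ to be amenable, hence $\Q'\cap p\text{L}(G)p=\text{L}_{\widetilde c}(\sR_\Q)$ is amenable. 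I expect the most delicate point to be establishing the Feldman-Moore identification and the commutant description precisely in the non-factor corner $p\text{L}(G)p$, including checking that the $\mathbb T$-valued cocycle $\widetilde c$ on $\sR$ is well-defined from the $A$-valued group cocycle $c$; the remaining steps are essentially bookkeeping.
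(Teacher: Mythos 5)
Your proposal follows essentially the same route as the paper's proof: identify $\text{L}(A)$ as a common Cartan subalgebra, realize $\text{L}(G)=\text{L}_c(\sR)$ and $\text{L}(H)=\text{L}(\sR)$ over the same orbit equivalence relation $\sR$ of $G/A\curvearrowright\widehat{A}$, identify both relative commutants with the twisted and untwisted algebras of the same subrelation fibered over the $\Q$-partition of $X_0$, and transfer amenability via Connes--Feldman--Weiss. The paper argues exactly this way (following \cite[Proposition 6]{CI08}), only invoking Feldman--Moore for an abstract $2$-cocycle instead of writing it down from the group extension, so your argument is correct and matches the paper's.
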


\begin{proof} Denote $\M=\text{L}(G)$, $\Nn=\text{L}(H)$ and $\P=\text{L}(A)$.
Identify  $\P=\text{L}^{\infty}(X,\mu)$ and consider the associated measure preserving action $G/A\curvearrowright (X,\mu)$, where $X$ denotes the dual of $A$ endowed with its Haar measure $\mu$.
Since $\{aga^{-1}\mid a\in A\}$ is infinite, for every $g\in G\setminus A$, we get that $\P$ is a Cartan subalgebra of $\M$.
Moreover, $\sR(\P\subset \M)$ can be identified with
$\sR =\sR(G/A\curvearrowright X)$. Thus, we get that $\M=\text{L}_{c}(\sR)$, for a $2$-cocycle $c\in\text{H}^2(\sR,\mathbb T)$. {We endow $\sR$ with the usual Borel measure $\widetilde\mu$ given by $\widetilde\mu(\mathcal T)=\int_X|\{y\in X\mid (x,y)\in \mathcal T\}|\;\text{d}\mu(x)$, for every Borel subset $\mathcal T\subset\sR$.}

We continue  by repeating part of the proof of \cite[Proposition 6]{CI08}.
Let $X_0\subset X$ be a measurable set such that $p={\bf 1}_{X_0}$. Endow $X_0$ with the probability measure $\mu(X_0)^{-1}\mu_{|X_0}$. Since $\Q\subset p\P p$ is a von Neumann subalgebra, there are a standard probability space $(Z,\rho)$ and a measurable, measure preserving onto map $\pi\colon X_0\rightarrow Z$ such that  we have $\Q=\{f\circ\pi\mid f\in \text{L}^{\infty}(Z,\rho)\}$. Since $p\P p\subset \Q'\cap p\M p$, \cite[Proposition 6.1]{Dy63} implies that $p\P p$ is a Cartan subalgebra of $\Q'\cap p\M p$. By \cite[Theorem 1]{FM77a}, we can find a subequivalence relation $\sS$ of $\sR|X_0:=\sR\cap (X_0\times X_0)$ such that $\Q'\cap p\M p=\text{L}_{d}(\sS)$, where $d\in\text{H}^2(\sS,\mathbb T)$ denotes the restriction of $c$ to $\sS$.

We claim that $\sS=\{(x_1,x_2)\in \sR|X_0\mid \pi(x_1)=\pi(x_2)\}$, {$\widetilde\mu$-almost everywhere}. To see this, let $\varphi\in [\sR|X_0]$. Then $\varphi\in [\sS]$ if and only if $u_{\varphi}\in \text{L}_d(\sS)$, that is, if and only if $u_{\varphi}$ commutes with $\Q$. The latter is equivalent to having for every $f\in \text{L}^{\infty}(Z,\rho)$ that $f(\pi(\varphi(x)))=f(\pi(x)))$, for almost every $x\in X_0$. Thus, $\varphi\in [\sS]$ if and only if $\pi(\varphi(x))=\pi(x)$, for almost every $x\in X_0$, which proves our claim.

Finally, note that  $\Nn=\text{L}(\sR)$ and arguing as in the previous paragraph shows that $\Q'\cap p\Nn p=\text{L}(\sS)$. If $\text{L}(\sS)$ is amenable, by Connes-Feldman-Weiss' theorem \cite{CFW81} we get that $\sS$ is an amenable and thus hyperfinite equivalence relation. This gives that $\Q'\cap p\M p=\text{L}_{d}(\sS)$ is a hyperfinite and thus amenable von Neumann algebra.
\end{proof}

\begin{proof}[Proof of Corollary \ref{solidity}]
Using a standard argument, whose proof we leave to the reader, the conclusion reduces to the following claim:  if $\Q\subset p\text{L}(A) p$ is a von Neumann subalgebra such that $\Q\nprec_{\text{L}(A)}\text{L}(\oplus_{j\not=k}A_j^{(I_j)})$, for every $1\leq k\leq m$, then $\Q'\cap p\text{L}(G) p$ is amenable.

Let $\Q\subset p\text{L}(A) p$ be a von Neumann subalgebra such that $\Q\nprec_{\text{L}(A)}\text{L}(\oplus_{j\not=k}A_j^{(I_j)})$, for every $1\leq k\leq m$.
 Consider the conjugation action of $B=G/A$ on $A$ and define $H=A\rtimes B$.
For $1\leq j\leq m$, consider the conjugation action of $B_j=G_j/A_j^{(I_j)}$ on $A_j^{(I_j)}$. By Remark \ref{bernoulli} the associated trace preserving action $B_j\curvearrowright \text{L}(A_j^{(I_j)})$ is built over $B_j\curvearrowright I_j$. Since $H=\oplus_{j=1}^m(A_j^{(I_j)}\rtimes B_j)$ we have $\text{L}(H)=\,\overline{\otimes}\,_{j=1}^m\big(\text{L}(A_j^{(I_j)})\rtimes B_j\big)$.
Since $\text{Stab}_{B_j}(i)$ is amenable, for all $i\in I_j$ and $1\leq j\leq k$, by applying Theorem \ref{solid} we get that $\Q'\cap p\text{L}(H)p$ is amenable.

Next, let $g=(g_1,\cdots,g_m)\in G\setminus A$. Then  $g_j\in G_j\setminus A_j^{(I_j)}$, for some $1\leq j\leq m$.
If $\varepsilon_j\colon G_j\rightarrow B_j$ is the quotient homomorphism, then $\varepsilon_j(g_j)\not=1$, hence $\{i\in I_j\mid \varepsilon_j(g_j)\cdot i\not=i\}$ is infinite. This implies that  $\{bg_jb^{-1}\mid b\in A_j^{(I_j)}\}$ is infinite. Thus, $\{aga^{-1}\mid a\in A\}$ is infinite. Since this holds for every $g\in G\setminus A$, we can apply Lemma \ref{transfer} to deduce that $\Q'\cap p\text{L}(G)p$ is amenable, as claimed.
\end{proof}


\subsection{Proof of Theorem \ref{superr}}\label{SUPER}


In preparation for the proof of Theorem \ref{superr}, we introduce some notation and record three useful facts.
Let $A$ be a nontrivial abelian group, $B$ a nontrivial ICC subgroup of a hyperbolic group and $B\curvearrowright I$ an action with $\text{Stab}_B(i)$ is amenable, for every  $i\in I$.
Let $G\in\mathcal W\mathcal R(A,B\curvearrowright I)$ be a  property (T) group.
Denote $\M=\text{L}(G)$ and assume that
$\M^t=\text{L}(H)$, for a countable group $H$ and $t>0$.

Let $\Delta_0\colon \text{L}(H)\rightarrow \text{L}(H)\,\overline{\otimes}\,\text{L}(H)$ be the comultiplication given by $\Delta_0(v_h)=v_h\otimes v_h$, for every $h\in H$. Let $n$ be the smallest integer such that $n\geq t$.
Denote $\sM=\M\,\overline{\otimes}\,\M\,\overline{\otimes}\,\mathbb M_n(\mathbb C)$.
Then $\Delta_0$ can be amplified to a unital $*$-homomorphism $\Delta\colon \M\rightarrow p\sM p$, where $p\in\sM$ is a projection with $(\tau\otimes\tau\otimes\text{Tr})(p)=t$.

\begin{rem}
Assume that $t\in\mathbb N$, so that $n=t$, $p=1$ and $\text{L}( H)=\M^n=\M\,\overline{\otimes}\,\mathbb M_n(\mathbb C)$.
For further reference, we make explicit the construction of $\Delta$ in terms of $\Delta_0$. To this end, let $\psi\colon \sM\,\overline{\otimes}\,\mathbb M_n(\mathbb C)\rightarrow \M\,\overline{\otimes}\,\mathbb M_n(\mathbb C)\,\overline{\otimes}\,\M\,\overline{\otimes}\,\mathbb M_n(\mathbb C)=\text{L}( H)\,\overline{\otimes}\,\text{L}( H)$ be the $*$-isomorphism given by $\psi(a\otimes b\otimes c\otimes d)=a\otimes c\otimes b\otimes d$, for every $a,b\in \M$ and $c,d\in\mathbb M_n(\mathbb C)$. Let $U\in \M\,\overline{\otimes}\,\mathbb M_n(\mathbb C)\,\overline{\otimes}\,\M\,\overline{\otimes}\,\mathbb M_n(\mathbb C)$ be a unitary such that $\Delta_0(1_\M\otimes x)=U\psi(1_{\sM}\otimes x)U^*$, for every $x\in\mathbb M_n(\mathbb C)$. Then $\psi^{-1}\circ\text{Ad}(U^*)\circ\Delta_0\colon \M\,\overline{\otimes}\,\mathbb M_n(\mathbb C)\rightarrow\sM\,\overline{\otimes}\,\mathbb M_n(\mathbb C)$ is a unital $*$-homomorphism which leaves $1\,\overline{\otimes}\,\mathbb M_n(\mathbb C)$ fixed, so it can be written as $\Delta\otimes\text{Id}$, where $\Delta\colon \M\rightarrow\sM$ is the desired unital $*$-homomorphism that amplifies $\Delta_0$. Thus, we conclude
 \begin{equation}\label{Delta}
\Delta_0=\text{Ad}(U)\circ\psi\circ(\Delta\otimes\text{Id})
 \end{equation}
In the proof of Theorem \ref{superr}, we will combine \eqref{Delta} with the symmetry and associativity properties of $\Delta_0$:  ${\nabla}\circ\Delta_0=\Delta_0$ and $(\Delta_0\otimes\text{Id})\circ\Delta_0=(\text{Id}\otimes\Delta_0)\circ\Delta_0$, where ${\nabla}$ is the flip automorphism of $\text{L}( H)\,\overline{\otimes}\,\text{L}( H)$ given by ${\nabla}(x\otimes y)=y\otimes x$, for every $x,y\in \text{L}( H)$.
\end{rem}

\begin{lem}[\cite{IPV10}]\label{comultiply} Let $\Delta\colon \M\rightarrow p\sM p$ be as defined above. Then the following hold:
\begin{enumerate}
\item[(a)] $\Delta(\Q)\nprec_{\sM}\M\,\overline{\otimes}\, 1\,\overline{\otimes}\,\mathbb M_n(\mathbb C)$ and $\Delta(\Q)\nprec_{\sM}1\,\overline{\otimes}\, \M\,\overline{\otimes}\,\mathbb M_n(\mathbb C)$, for any diffuse von Neumann subalgebra $\Q\subset \M$.
\item[(b)] $\Delta(\M)\nprec_{\sM}\M\,\overline{\otimes}\,\emph{L}(G_0)\,\overline{\otimes}\,\mathbb M_n(\mathbb C)$ and $\Delta(\M)\nprec_{\sM}\emph{L}(G_0)\,\overline{\otimes}\,\M\,\overline{\otimes}\,\mathbb M_n(\mathbb C)$, for any infinite index subgroup $G_0<G$.
\item[(c)] If $\mathcal H\subset \emph{L}^2(p\sM p)$ is a $\Delta(\M)$-sub-bimodule which is right finitely generated, then we have  $\mathcal H\subset \emph{L}^2(\Delta(\M))$.

\end{enumerate}
\end{lem}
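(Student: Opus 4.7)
The three statements are amplifications of standard properties of comultiplications of the form $v_h\mapsto v_h\otimes v_h$ established in \cite[Section 7]{IPV10}; via the amplification identity~(\ref{Delta}) it suffices to prove the unamplified versions for $\Delta_0\colon\text{L}(H)\to\text{L}(H)\,\overline{\otimes}\,\text{L}(H)$ and transfer, so I focus on the group-von-Neumann-algebra picture.

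For part~(a), I would apply Popa's intertwining criterion, Theorem~\ref{corner}(b). Given a diffuse $\Q\subset\M$, pick a sequence of unitaries $u_m\in\Q$ converging to $0$ weakly and expand $u_m=\sum_h\hat u_m(h)v_h$ in the canonical Fourier basis. Then $\Delta_0(u_m)=\sum_h\hat u_m(h)\,v_h\otimes v_h$ is diagonal in $\{v_h\otimes v_h\}$, so for $x,y$ of the form $\sum\alpha_{k,l}v_k\otimes v_l$ with finitely many nonzero coefficients, the $2$-norm $\|E_{\text{L}(H)\otimes 1}(x^*\Delta_0(u_m)y)\|_2$ collapses to a finite linear combination of the $\hat u_m(h)$'s (the constraint that the second tensor coordinate reduce to the identity picks out only finitely many indices depending on $x,y$) and tends to $0$ as $m\to\infty$. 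Density in $L^2$ and the flip-symmetry of $\Delta_0$ between the two tensor legs deliver both non-embeddings.

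Part~(b) follows the same scheme, with the infinite-index hypothesis on $G_0<G$ replacing diffuseness. Selecting representatives $g_m\in G$ of pairwise distinct cosets $g_mG_0$ and applying Theorem~\ref{corner}(b) to the unitaries $\Delta(u_{g_m})\in\mathscr U(\Delta(\M))$, one checks that distinct cosets contribute orthogonally to the expectation onto $\M\,\overline{\otimes}\,\text{L}(G_0)\,\overline{\otimes}\,\mathbb M_n(\mathbb C)$; having infinitely many cosets drives the $2$-norms of these expectations to $0$ on a dense set of $(x,y)$, as in \cite[Lemma 7.2]{IPV10}. The only subtlety is the bookkeeping between the $H$- and $G$-Fourier pictures, which is routine. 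Symmetry handles the second non-embedding. Part~(c) is the most structural: a right $\Delta(\M)$-finitely generated, left $\Delta(\M)$-invariant sub-bimodule $\mathcal H\subset L^2(p\sM p)$ produces, via its orthogonal projection, a $\Delta(\M)$-central element of the basic construction $\langle p\sM p,e_{\Delta(\M)}\rangle$ with finite trace. By Theorem~\ref{corner}(c), any portion of $\mathcal H$ outside $L^2(\Delta(\M))$ would witness a nontrivial intertwiner between $\Delta(\M)$ and itself inside $p\sM p$; combining the diagonal form of $\Delta$ with the non-embedding of~(a) applied to $\Q=\M$ forces every such intertwiner to lie in $\Delta(\M)$ already, yielding $\mathcal H\subset L^2(\Delta(\M))$. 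Part~(c) is the main technical obstacle of the lemma; parts~(a) and~(b) reduce to direct Fourier computations exploiting the diagonal structure of $\Delta_0$.
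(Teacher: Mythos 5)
Your part (a) is fine: it is essentially the computation behind \cite[Proposition 7.2(1)]{IPV10}, which is exactly what the paper cites, and the reduction to the unamplified comultiplication is harmless (though \eqref{Delta} is only derived for integer $t$; in general one uses that intertwining is stable under amplification). The arguments you give for (b) and (c), however, have genuine gaps. In (b), the conditional expectation onto $\M\,\overline{\otimes}\,\text{L}(G_0)\,\overline{\otimes}\,\mathbb M_n(\mathbb C)$ is computed through the $G$-Fourier decomposition of the second tensor leg, whereas $\Delta$ is diagonal only with respect to the unknown group $H$; nothing is known about the $G$-Fourier coefficients of $\Delta(u_{g_m})$, so the claim that ``distinct cosets contribute orthogonally'' to these expectations has no justification, and even pairwise orthogonality would only give weak convergence of $E_{\M\,\overline{\otimes}\,\text{L}(G_0)\,\overline{\otimes}\,\mathbb M_n(\mathbb C)}(x^*\Delta(u_{g_m})y)$ to $0$, not the $\|\cdot\|_2$-convergence required by Theorem \ref{corner}(b). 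That no such soft argument can work is illustrated by Step 1 of the proof of Theorem \ref{superr}: $\Delta(\P)\prec^{\text{s}}_{\sM}\M\,\overline{\otimes}\,\P\,\overline{\otimes}\,\mathbb M_n(\mathbb C)$ even though $\P=\text{L}(A^{(I)})$ and $A^{(I)}$ has infinite index in $G$. The paper's (and \cite{IPV10}'s) route is different: assume $\Delta(\M)\prec_{\sM}\M\,\overline{\otimes}\,\text{L}(G_0)\,\overline{\otimes}\,\mathbb M_n(\mathbb C)$ and run the proof of \cite[Proposition 7.2(2)]{IPV10} (a basic-construction argument, not a Fourier estimate) to conclude $\M\prec_{\M}\text{L}(G_0)$, which forces $G_0<G$ to have finite index.

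In (c), the orthogonal projection onto $\mathcal H$ does lie in $\Delta(\M)'\cap\langle p\sM p,e_{\Delta(\M)}\rangle$ and has finite trace, but this carries no information: $e_{\Delta(\M)}$ itself is such an element, and via Theorem \ref{corner}(c) it only yields $\Delta(\M)\prec_{p\sM p}\Delta(\M)$, which is vacuous and does not place $\mathcal H$ inside $\text{L}^2(\Delta(\M))$. The real content of (c) is that $\text{L}^2(p\sM p)\ominus\text{L}^2(\Delta(\M))$ contains no nonzero right finitely generated $\Delta(\M)$-sub-bimodule, and this genuinely requires the ICC hypothesis on $G$, which your argument never uses: if, say, $H=G$ (so $\Delta(u_g)=u_g\otimes u_g\otimes 1$) and $G$ had a nontrivial central element $z$, then the closure of $\Delta(\M)(u_z\otimes 1\otimes 1)$ would be a singly generated $\Delta(\M)$-bimodule orthogonal to $\text{L}^2(\Delta(\M))$. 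Invoking part (a) with $\Q=\M$ cannot supply the missing input, since (a) holds whether or not $G$ is ICC. The paper's proof pinpoints exactly what is needed: ICC gives that $\text{C}_G(g)<G$ has infinite index, hence $\M\nprec_{\M}\text{L}(\text{C}_G(g))$ for all $g\neq e$, and under this hypothesis \cite[Proposition 7.2(3)]{IPV10} yields $\mathcal H\subset\text{L}^2(\Delta(\M))$.
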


\begin{proof} (a)\; This part is \cite[Proposition 7.2(1)]{IPV10}.

(b)\; If  $\Delta(\M)\prec_{\sM}\M\,\overline{\otimes}\,\text{L}(G_0)\,\overline{\otimes}\,\mathbb M_n(\mathbb C)$, for a subgroup $G_0<G$, then the proof of \cite[Proposition 7.2(2)]{IPV10} shows that $\M\prec_{\M}\text{L}(G_0)$ and so $G_0<G$ has finite index. Similarly,  $\Delta(\M)\prec_{\sM}\text{L}(G_0)\,\overline{\otimes}\,\M\,\overline{\otimes}\,\mathbb M_n(\mathbb C)$ also implies that $G_0<G$ has finite index.

(c)\; Since $G$ is  ICC, $\text{C}_G(g)<G$ has infinite index and thus $\M\nprec_{\M}\text{L}(\text{C}_G(g))$, for every $g\in G\setminus\{e\}$. The conclusion then follows from \cite[Proposition 7.2(3)]{IPV10}.\end{proof}

In the proof of Theorem \ref{superr} we will also need the fact that the set $\{i\in I\mid b\cdot i\not=i\}$ is infinite, for every $b\in B\setminus \{1\}$. This more generally holds if $B$ is acylindrically hyperbolic:

\begin{lem}\label{infinite}
Let $B$ be an \emph{ICC} group acting on a set $I$. Then the following hold:
\begin{enumerate}
\item[(a)]
Assume that $B$ is acylindrically hyperbolic and $\emph{Stab}_B(i)$ is amenable, for every $i\in I$. Then,  for every non-trivial $b\in B$, the set $\{ i\in I\mid b \cdot i\ne i\}$ is infinite.
\item[(b)]
Assume that $B\cdot i$ is infinite, for every $\in I$. Let $A$ be a group. Then every $G\in\WR(A,B\curvearrowright I)$ is \emph{ICC}.
\end{enumerate}
\end{lem}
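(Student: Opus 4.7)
The plan is to handle the two parts in turn, using a structural obstruction for (a) and a direct support analysis for (b).

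For (a), I would argue by contradiction: assume the support $\{i\in I\mid b\cdot i\ne i\}$ is finite, and set $J=\{g\in B\mid |\{i\in I\mid g\cdot i\ne i\}|<\infty\}$. First I would verify that $J$ is a normal subgroup of $B$: it is closed under products because $\text{supp}(gh)\subseteq \text{supp}(g)\cup\text{supp}(h)$, closed under inversion because $g$ and $g^{-1}$ have the same support, and normal because $\text{supp}(fgf^{-1})=f\cdot\text{supp}(g)$. Since $b\in J$, the subgroup $J$ is non-trivial. The crucial step is to show $J$ is amenable: given finitely many $g_1,\dots,g_n\in J$, the intersection of their (cofinite) fixed-point sets is still cofinite and hence non-empty, so $\langle g_1,\dots,g_n\rangle\subseteq \text{Stab}_B(i)$ for some $i\in I$, which is amenable by assumption. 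Amenability passes through directed unions, so $J$ itself is amenable. Finally, I would invoke the standard fact, coming out of the DGO/Osin theory of acylindrically hyperbolic groups, that every amenable normal subgroup of an acylindrically hyperbolic group is contained in the finite radical. Since $B$ is ICC, Theorem \ref{Thm:HypICC} gives $K(B)=\{1\}$, forcing $J=\{1\}$ and contradicting $b\in J\setminus\{1\}$.

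For (b), let $\e\colon G\to B$ be the canonical projection and take any $g\in G\setminus\{1\}$. If $\e(g)\ne 1$, then as $h$ ranges over $G$, the elements $\e(hgh^{-1})=\e(h)\e(g)\e(h)^{-1}$ range over the full $B$-conjugacy class of $\e(g)$, which is infinite since $B$ is ICC; hence the $G$-conjugacy class of $g$ is infinite. Otherwise $\e(g)=1$, so $g\in A^{(I)}$ has a finite non-empty support $F\subseteq I$. Since $hA_ih^{-1}=A_{\e(h)\cdot i}$, conjugating by $h$ with $\e(h)=b$ gives $\text{supp}(hgh^{-1})=b\cdot F$, so it suffices to show that the $B$-orbit of $F$ in the power set of $I$ is infinite. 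If it were finite, the setwise stabilizer $\text{Stab}_B(F)$ would have finite index in $B$; its action on the finite set $F$ factors through $\text{Sym}(F)$, so the pointwise stabilizer $\bigcap_{i\in F}\text{Stab}_B(i)$ would have index at most $|F|!$ in $\text{Stab}_B(F)$, hence finite index in $B$. In particular, $\text{Stab}_B(i_0)$ would have finite index in $B$ for any $i_0\in F$, contradicting the hypothesis that $B\cdot i_0$ is infinite.

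The main obstacle is really just locating and citing the structural result used at the end of (a) — that amenable normal subgroups of acylindrically hyperbolic groups are finite. Part (b) is an essentially routine bookkeeping argument, and the formation of $J$ and its amenability in (a) are mechanical; the only genuine input is the DGO-style finiteness of amenable normal subgroups.
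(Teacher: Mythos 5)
Your argument is essentially the paper's own: in (a) you produce a non-trivial amenable normal subgroup containing $b$ (you take the group $J$ of all finitely supported elements, the paper takes the normal closure of $b$; the amenability argument via finitely generated subgroups fixing a point and directed unions is identical), and then you kill it with the finiteness of the amenable radical of an acylindrically hyperbolic group \cite[Corollary 8.1]{Osi16} together with Theorem \ref{Thm:HypICC}; in (b) both proofs split on whether $\e(g)=1$ and then exploit finiteness of supports, you by showing the $B$-orbit of the support set is infinite (via the finite-index pointwise stabilizer), the paper by covering the infinite orbit $B\cdot i$ with the finite supports of the conjugates. These are minor variations of the same proof.

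The one step in your write-up that does not stand as written is ``the intersection of their (cofinite) fixed-point sets is still cofinite and hence non-empty'': this presupposes that $I$ is infinite, which is not part of the hypotheses. If $I$ were finite (and non-empty) every fixed-point set is trivially cofinite, the intersection may be empty, and indeed your $J$ would be all of $B$, so the amenability conclusion would be absurd. The paper closes exactly this gap with one sentence: acylindrically hyperbolic groups are non-amenable, whereas a finite non-empty $I$ would make some $\mathrm{Stab}_B(i)$ a finite-index amenable subgroup (the kernel of $B\to\mathrm{Sym}(I)$ lies in it), forcing $B$ to be amenable; hence $I$ must be infinite. Add that observation and your proof of (a) is complete; part (b) is correct as written.
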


\begin{proof} (a)\;
Suppose that the set $\{ i\in I\mid b \cdot i\ne i\}$ is finite for some  $b\in B$. Let $N$ denote the minimal normal subgroup of $B$ containing $b$. The subgroup $N$ is generated by the set $X=\{ t^{-1}bt\mid t\in B\}$. For every finite subset $F\subseteq X$, the subgroup $\langle F\rangle $ stabilizes all but finitely many elements of $I$. Since acylindrically hyperbolic groups are non-amenable, $I$ must be infinite. In particular, the subgroup $\langle F\rangle $ stabilizes at least one element of $I$ and hence it is amenable. This implies that $N$ is amenable being the union of amenable groups. By \cite[Corollary 8.1 (a)]{Osi16}, the amenable radical of every acylindrically hyperbolic group is finite. Since $B$ is ICC, $N$ must be trivial. Thus, $b=1$.

(b)\; Let $G\in\WR(A,B\curvearrowright I)$, for some group $A$, and denote by $\varepsilon:G\rightarrow B$ the quotient homomorphism. Let $g\in G\setminus \{1\}$. We treat two cases. First, assume that $\varepsilon(g)\not=1$. Since $B$ is ICC, we get that $\{\varepsilon(hgh^{-1})\mid h\in G\}=\{b\varepsilon(g)b^{-1}\mid b\in B\}$ is infinite, and thus the conjugacy class $\{hgh^{-1}\mid h\in G\}$ is infinite. Second, assume that $\varepsilon(g)=1$, i.e., $g\in A^{(I)}\setminus\{1\}$.
If $a=(a_i)_{i\in I}\in A^{(I)}\setminus\{1\}$ we denote by $\text{supp}(a)=\{i\in I\mid a_i\not=1\}$ the support of $a$. Let $i\in\text{supp}(g)$. If $j\in B\cdot i$, then we can write $j=\varepsilon(h)\cdot i$, for some $h\in G$. Since $\text{supp}(hgh^{-1})=\varepsilon(h)\cdot\text{supp}(g)$, we get that $j\in\text{supp}(hgh^{-1})$.
Thus, $B\cdot i\subset\cup_{h\in G}\text{supp}(hgh^{-1})$. Since $B\cdot i$ is infinite, we conclude that the conjugacy class $\{hgh^{-1}\mid h\in G\}$ is infinite as well. This finishes the proof.
\end{proof}

\begin{proof}[Proof of Theorem \ref{superr}]  Let $\pi\colon G\rightarrow  B$ be the quotient homomorphism.
For $g\in B$, fix $\widehat{g}\in G$ with $\pi(\widehat{g})=g$.
Let $K$ be a hyperbolic group containing $B$ and denote still by $\pi$ the homomorphism $\pi\colon G\rightarrow K$.
 Let $\mathbb D_n(\mathbb C)\subset\mathbb M_n(\mathbb C)$ be the subalgebra of diagonal matrices. For $1\leq i\leq n$, let $e_i={\bf 1}_{\{i\}}\in\mathbb D_n(\mathbb C)$.
Denote $$\text{$\P=\text{L}(A^{(I)})$,\;\;  $\sP=\P\,\overline{\otimes}\,\P\,\overline{\otimes}\,\mathbb D_n(\mathbb C)$\;\; and \;\;
$\Q=\Delta(\P)'\cap p\sM p$.}$$

The proof is divided into six steps.

{\it Step 1.} $\Q\prec_{\sM}^{\text s}\sP$.

\begin{proof}   We first prove that $\Delta(\P)\prec_{\sM}^{\text s}\sP$.  Write $\sM=(\M\,\overline{\otimes}\, 1\,\overline{\otimes}\,\mathbb M_n(\mathbb C))\rtimes G$, using the trivial action of $G$.  As $\ker(\pi)=A^{(I)}$, we have $(\M\,\overline{\otimes}\, 1\,\overline{\otimes}\,\mathbb M_n(\mathbb C))\rtimes \ker(\pi)=\M\,\overline{\otimes}\,\P\,\overline{\otimes}\,\mathbb M_n(\mathbb C)$.
Since  $\Delta(\P)$ is amenable,    $\Delta(\M)\subset\sN_{p\sM p}(\Delta(\P))''$ has property (T) and $\Delta(\M)\nprec_{\sM} \M\,\overline{\otimes}\,\P\,\overline{\otimes}\,\mathbb M_n(\mathbb C)$ by Lemma \ref{comultiply} (b),  from Theorem \ref{relativeT} we derive that $\Delta(\P)\prec_{\sM}^{\text s}\M\,\overline{\otimes}\,\P\,\overline{\otimes}\,\mathbb M_n(\mathbb C)$.
Similarly,  $\Delta(\P)\prec_{\sM}^{\text s}\P\,\overline{\otimes}\,\M\,\overline{\otimes}\,\mathbb M_n(\mathbb C)$. Combining these facts with \cite[Lemma 2.8(2)]{DHI16} gives that $\Delta(\P)\prec_{\sM}^{\text s}\P\,\overline{\otimes}\,\P\,\overline{\otimes}\,\mathbb M_n(\mathbb C)$, which proves our claim.

Next, we have that $\Delta(\P)\nprec_{\sM} \P\,\overline{\otimes}\, 1\,\overline{\otimes}\, \mathbb M_n(\mathbb C)$ and $\Delta(\P)\nprec_{\sM} 1\,\overline{\otimes}\, \P\,\overline{\otimes}\, \mathbb M_n(\mathbb C)$ by Lemma \ref{comultiply} (a). Since the action $B\curvearrowright I$ has amenable stabilizers, $\{i\in I\mid b\cdot i\not=i\}$ is infinite, for every $b\in B\setminus\{1\}$, by Lemma \ref{infinite} (a). Thus, using that $\Delta(\P)\prec_{\sM}^{\text s}\sP$, Corollary \ref{solidity} implies that  $\Q$ is amenable.

Finally, since  $\Q$ is amenable and $\Delta(\M)\subset\sN_{p\sM p}(\Q)''$, repeating the first paragraph of the proof with $\Q$ instead of $\Delta(\P)$ completes the proof of this step.
\end{proof}

As $\sP\subset\sM$ is a Cartan subalgebra, by combining Step 1 with Lemma \ref{conj1}, after replacing $\Delta$ with $\text{Ad}(u)\circ\Delta$, for some $u\in\sU(\sM)$, we may assume that $p\in \sP$ and \begin{equation}\label{deltaP}\Delta(\P)\subset \sP p\subset\Q.\end{equation}

If $g\in B$, then $\Delta(u_{\widehat{g}})$ normalizes $\Delta(\P)$ and thus $\Q$. Denote $\sigma_g=\text{Ad}(\Delta(u_{\widehat{g}}))\in\text{Aut}(\Q)$.
Since $\widehat{g}\widehat{h}{\widehat{(gh)}}^{-1}\in\ker(\pi)=A^{(I)}$, we have $\Delta(u_{\widehat{g}})\Delta(u_{\widehat{h}})\Delta(u_{\widehat{gh}})^*\in\Delta(\P)$, for every $g,h\in B$.  Since $\Delta(\P)\subset\sZ(\Q)$, $\sigma =(\sigma_g)_{g\in B}$ defines an action of $B$ on $\Q$ which leaves $\Delta(\P)$ invariant. Since the restriction of $\sigma$ to $\Delta(\P)$ is conjugate to an action $B\curvearrowright \text{L}(A)^I$ built over $B\curvearrowright I$ (see Remark \ref{bernoulli}), it is free and weakly mixing. This fact can be strengthened as follows.

{\it Step 2.} The action $B\curvearrowright^{\sigma}\Q$ is weakly mixing.

\begin{proof}  This claim is a consequence of Step 3 in the proof of \cite[Theorem 8.2]{IPV10}, which we recall for completeness.
Let $\mathcal H\subset \text{L}^2(\Q)$ be a finite dimensional $\sigma(B)$-invariant subspace.
Let $\mathcal K\subset \text{L}^2(p\sM p)$ be the $\|\cdot\|_2$-closure of the linear span of $\mathcal H\Delta(\M)$. Since $\mathcal H$ and $\Delta(\P)$ commute, we get that $\Delta(\P)\mathcal K=\mathcal K$. If $g\in B$, then $\Delta(u_{\widehat{g}})\mathcal H=\mathcal H\Delta(u_{\widehat{g}})$ and so $\Delta(u_{\widehat{g}})\mathcal K=\mathcal K$. Since $G=\{a\widehat{g}\mid a\in A^{(I)},g\in B\}$, $\mathcal K$ is a left $\Delta(\M)$-module. Thus, $\mathcal K$ is a $\Delta(\M)$-bimodule which is right finitely generated as $\mathcal H$ is finite dimensional. Lemma \ref{comultiply} (c) gives that $\mathcal K\subset \text{L}^2(\Delta(\M))$, hence $\mathcal H\subset \text{L}^2(\Delta(\M))$. Since $\mathcal H$ commutes with $\Delta(\P)$, we have $\mathcal H\subset \text{L}^2(\Delta(\P))$. As the restriction of $\sigma$ to $\Delta(\P)$ is weakly mixing, we conclude that $\mathcal H\subset\mathbb Cp$, as claimed.
\end{proof}

Steps 1 and 2 imply that $\Q$ is a type I$_k$ algebra, for some $k\in\mathbb N$.
Using \eqref{deltaP}, the beginning of the proof of Lemma \ref{conj2} shows that there is a decomposition $\Q=\sZ(\Q)\,\overline{\otimes}\,\mathbb M_k(\mathbb C)$ such that  $\sP p=\sZ(\Q)\,\overline{\otimes}\,\mathbb D_k(\mathbb C)$. Therefore,  $(\Q)_1\subset\sum_{i=1}^k(\sP p)_1x_i$, for some $x_1,\cdots,x_k\in \Q$.
Moreover, by Lemma \ref{conj2} there is an action $\beta=(\beta_g)_{g\in B}$ of $B$ on $\Q$ such that

\begin{itemize}
\item  for every $g\in B$ we have that $\beta_g=\sigma_g\circ\text{Ad}(\omega_g)=\text{Ad}(\Delta(u_{\widehat{g}})\omega_g)$, for some $\omega_g\in\sU(\Q)$,
\item $\sP p$ is $\beta(B)$-invariant and the restriction of $\beta$ to $\sP p$ is free, and
\item the minimal projections $p_1,\cdots,p_k$ of $1\,\overline{\otimes}\,\mathbb D_k(\mathbb C)\subset\mathcal Pp$ are $\beta(B)$-invariant and the restriction of $\beta$ to $\sP p_i$ is weakly mixing, for every $1\leq i\leq k$.
\end{itemize}

Our next goal is to apply Theorem \ref{SOE}.
Let $(Y,\nu)$ be the dual of $A$ with its Haar measure.
Let $(X,\mu)=(Y^{I}\times Y^{I},\nu^{ I}\times\nu^{I})$ and $(\widetilde X,\widetilde\mu)=(X\times\mathbb Z/n\mathbb Z, \mu\times c)$, where $c$ is the counting measure of $\mathbb Z/n\mathbb Z$. Identify  $\P=\text{L}^{\infty}(Y^I)$ and $\sP=\text{L}^{\infty}(\widetilde X)$.
Consider the action $B\curvearrowright^{\alpha_0} (Y^I,\nu^I)$ given by $\alpha_0(g)=\text{Ad}(u_{\widehat{g}})$, for all $g\in B$. By Remark \ref{bernoulli}, $\alpha_0$ is conjugate to an action built over $B\curvearrowright I$.
Let $ B\times B\curvearrowright^{\alpha}(X,\mu)$ be  given by $(g_1,g_2)\cdot (x_1,x_2)=(g_1\cdot x_1,g_2\cdot x_2)$, for all $g_1,g_2\in B$ and $x_1,x_2\in Y^{I}$. Let $ B \times B\times\mathbb Z/n\mathbb Z\curvearrowright^{\widetilde\alpha} (\widetilde X,\widetilde\mu)$ be the action given by $(g,a)\cdot (x,b)=(g\cdot x,a+b)$.
Let $X_0\subset \widetilde X$ be a measurable set such that $p={\bf 1}_{X_0}$.
Since $\sP p=\text{L}^{\infty}(X_0)$ is $\beta(B)$-invariant, we get a measure preserving action $B\curvearrowright^\beta (X_0,\widetilde\mu_{|X_0})$.

Since the restriction of $\beta$ to $\sP p$ is implemented by unitaries in $p\sM p$ and we have that $\sR(\sP\subset\sM)=\sR( B\times  B\times\mathbb Z/n\mathbb Z\curvearrowright^{\widetilde\alpha}\widetilde X)$,
we deduce that
\begin{equation}\label{include}
\text{$\beta( B)\cdot x\subset\widetilde\alpha( B\times B\times\mathbb Z/n\mathbb Z)\cdot x$, for almost every $x\in X_0$.}
\end{equation}

In order to apply Theorem \ref{SOE} to \eqref{include}, we first establish the following claim:

{\it Step 3}. Let $ B_0<B$ be an infinite index subgroup. Then there is a sequence $(h_m)\subset B$ such that  for every $s,t\in B$ we have $\widetilde\mu(\{x\in X_0\mid \beta_{h_m}(x)\in\widetilde\alpha(sB_0t\times B\times\mathbb Z/n\mathbb Z)(x)\})\rightarrow 0$ and $\widetilde\mu(\{x\in X_0\mid \beta_{h_m}(x)\in\widetilde\alpha(B\times sB_0t\times\mathbb Z/n\mathbb Z)(x)\})\rightarrow 0$.

\begin{proof} Let $G_0=\pi^{-1}( B_0$). Then $G_0<G$ is an infinite index subgroup.
{Since $\Delta(u_g)_{g\in G}$ is a group of unitaries generating $\Delta (\mathcal M)$, using Lemma \ref{comultiply}(b) and Theorem \ref{corner}}
we can find a sequence $(k_m)\subset G$ such that  for every $x,y\in\sM$ we have
\begin{equation}\label{weak}\text{$\|E_{\M\,\overline{\otimes}\,\text{L}(G_0)\,\overline{\otimes}\,\mathbb M_n(\mathbb C)}(x\Delta(u_{k_m})y)\|_2\rightarrow 0$ and $\|E_{\text{L}(G_0)\,\overline{\otimes}\,\M\,\overline{\otimes}\,\mathbb M_n(\mathbb C)}(x\Delta(u_{k_m})y)\|_2\rightarrow 0$.}\end{equation}

We will show that $h_m=\pi(k_m)\in B$ satisfy the assertion of the claim.
Since $k_m^{-1}\widehat{h_m}\in A^{(I)}$ and $\omega_{h_m}\in\sU(\Q)$, we get that $\Delta(u_{\widehat{h_m}})\omega_{h_m}\in\Delta(u_{k_m}) \sU(\Q)$. Thus, $\Delta(u_{\widehat{h_m}})\omega_{h_m}\in\sum_{i=1}^k\Delta(u_{k_m})(\sP p)_1x_i$, for every $m\in\mathbb N$. As $\sP$ is regular in $\sM$ and contained in $\M\,\overline{\otimes}\,\text{L}(G_0)\,\overline{\otimes}\,\mathbb M_n(\mathbb C)$ and $\text{L}(G_0)\,\overline{\otimes}\,\M\,\overline{\otimes}\,\mathbb M_n(\mathbb C)$,  \eqref{weak} implies that for every $x,y\in\sM$
\begin{equation}\label{v_h_m}\text{$\|E_{\M\,\overline{\otimes}\,\text{L}(G_0)\,\overline{\otimes}\,\mathbb M_n(\mathbb C)}(x\Delta(u_{\widehat{h_m}})\omega_{h_m}y)\|_2\rightarrow 0$, $\|E_{\text{L}(G_0)\,\overline{\otimes}\,\M\,\overline{\otimes}\,\mathbb M_n(\mathbb C)}(x\Delta(u_{\widehat{h_m}})\omega_{h_m}y)\|_2\rightarrow 0$.}\end{equation}

On the other hand, we have that $\beta_{h_m}=\text{Ad}(\Delta(u_{\widehat{h_m}})\omega_{h_m})$ and $\alpha(g_1,g_2)=\text{Ad}(u_{(\widehat{g_1},\widehat{g_2})})$, for every $(g_1,g_2)\in B\times B$. These facts imply that
$\widetilde\mu(\{x\in X_0\mid  \beta_{h_m}(x)\in\widetilde{\alpha}(sB_0t\times B\times\mathbb Z/n\mathbb Z)(x)\})$ is equal to $\|E_{\text{L}(G_0)\,\overline{\otimes}\,\M\,\overline{\otimes}\,\mathbb M_n(\mathbb C)}((u_{\widehat{s}}^*\otimes 1\otimes 1)\Delta(u_{\widehat{h_m}})\omega_{h_m}(u_{\widehat{t}}^*\otimes 1\otimes 1))\|_2^2.$
Since by \eqref{v_h_m} the last term converges to $0$, as $m\rightarrow \infty$, this proves the first assertion of Step 3. The second assertion follows similarly.
\end{proof}

Next, $\alpha$ is {conjugate} to an action $ B\times B\curvearrowright (Y^J,\nu^J)$ built over $ B\times B\curvearrowright J= I\times\{1,2\}$ given by $(g_1,g_2)\cdot (i,j)=(g_j\cdot i,j)$.  Thus, $\text{Stab}_{ B\times B}(j)$ is equal to either $B_0\times B$ or $B\times B_0$, where $B_0=\text{Stab}_B(i)$. Note that since $B_0$ is amenable and $B$ is nonamenable, the inclusion $B_0<B$ has infinite index.
Since $ B$ is ICC,  for every $g\in ( B\times B)\setminus\{(1,1)\}$, there is an infinite index subgroup $ B_1< B$  such that $\text{C}_{ B\times B}(g)\subset B\times B_1$ or $\text{C}_{ B\times B}(g)\subset  B_1\times B$.

Fix $1\leq i\leq k$. Let $X_i\subset X_0$ be a $\beta( B)$-invariant measurable set such that  $p_i={\bf 1}_{X_i}$. Since $\beta_{|X_i}$ is free, weakly mixing and $ B$ has property (T), equation \eqref{include},  the previous paragraph and Step 3 show that the conditions of the moreover assertion of Theorem \ref{SOE} are satisfied by $\beta_{|X_i}$ and $\alpha$. Thus, Theorem \ref{SOE} implies that $\widetilde\mu(X_i)=1$ and there are $\theta_i\in [\sR( B\times B\times\mathbb Z/n\mathbb Z\curvearrowright^{\widetilde\alpha}\widetilde X)]$ and an injective homomorphism $\varepsilon_i=(\varepsilon_{i,1},\varepsilon_{i,2})\colon  B\rightarrow B\times B$
such that $\theta_i(X_i)=X\times\{i\}\equiv X$ and $\theta_i\circ\beta(h)_{|X_i}=\alpha(\varepsilon_i(h))\circ{\theta_i}_{|X_i}$, for every $h\in B$.

Let $u_i\in\sN_{\sM}(\sP)$ such that $u_iau_i^*=a\circ\theta_i^{-1}$, for every $a\in\sP$. Then $u_ip_iu_i^*=1\otimes 1\otimes e_i$ and the last relation implies that we can find $(\zeta_{i,h})_{h\in B}\subset\sU(\P\,\overline{\otimes}\,\P)$ such that
\begin{equation}\label{conjug}\text{$u_i\Delta(u_{\widehat{h}})\omega_hp_iu_i^*=\zeta_{i,h}u_{(\widehat{\varepsilon_{i,1}(h)},\widehat{\varepsilon_{i,2}(h)})}\otimes e_i$, for every $h\in B$.}\end{equation}

{\it Step 4.} $\varepsilon_i$ is conjugate to $\varepsilon_j$, for every $1\leq i,j\leq k$.

\begin{proof}
We claim that $ B_0=\varepsilon_{i,1}( B)$ has finite index in $ B$. If this is false, then $G_0=\pi^{-1}( B_0)$ has infinite index in $G$. On the other hand,  \eqref{conjug} gives that $\Delta(\M)\prec_{\sM}\text{L}(G_0)\,\overline{\otimes}\,\M\,\overline{\otimes}\,\mathbb M_n(\mathbb C)$, which contradicts  Lemma \ref{comultiply} (b). Similarly, we get that $\varepsilon_{i,2}( B)<B$ has finite index.

Let $1\leq i,j\leq k$. Since $p_i,p_j\in\Q$ are equivalent projections (as they are minimal projections of $1\,\overline{\otimes}\,\mathbb D_k(\mathbb C)\subset \Q=\sZ(\Q)\,\overline{\otimes}\,\mathbb M_k(\mathbb C)$) then  $p_j=zp_iz^*$, for some $z\in\sU(\Q)$. As $\Delta(u_{\widehat{h}})\omega_h\in\sU(p\sM p)$ normalizes $\Q$, we get that $z_h=\text{Ad}(\Delta(u_{\widehat{h}})\omega_h)(z)\in\sU(\Q)$.
Then $\Delta(u_{\widehat{h}})\omega_hp_j=\Delta(u_{\widehat{h}})\omega_hzp_iz^*=z_h\Delta(u_{\widehat{h}})\omega_hp_iz^*$. Using \eqref{conjug} we get that
\begin{align*}
\zeta_{j,h}u_{(\widehat{\varepsilon_{j,1}(h)},\widehat{\varepsilon_{j,2}(h)})}\otimes e_j&=u_j\Delta(u_{\widehat{h}})\omega_hp_ju_j^* \\ &=u_j(z_h\Delta(u_{\widehat{h}})\omega_hp_iz^*)u_j^*\\&=u_j(z_h(u_i^*(\zeta_{i,h}u_{(\widehat{\varepsilon_{i,1}(h)},\widehat{\varepsilon_{i,2}(h)})}\otimes e_i)u_i)z^*)u_j^*, \text{for every $h\in B$}.\end{align*}


{For $h\in B$, denote $\widetilde\zeta_h=u_j(z_h(u_i^*(\zeta_{i,h}u_{(\widehat{\varepsilon_{i,1}(h)},\widehat{\varepsilon_{i,2}(h)})}\otimes e_i)u_i)z^*)u_j^*$.
For a subset $F\subset B\times B$, denote by $P_F$ the orthogonal projection from $\text{L}^2(\sM)$ onto the $\|\cdot\|_2$-closure of the linear span of $\{(xu_g)\otimes y)\mid x\in\P\,\overline{\otimes}\,\P, g\in (\pi\times\pi)^{-1}(F), y\in\mathbb M_n(\mathbb C)\}$.
Since $z_h\in\sU(\Q)\subset\sum_{i=1}^k(\sP)_1x_i$, $\sP\subset \P\,\overline{\otimes}\,\P\,\overline{\otimes}\,\mathbb M_n(\mathbb C)$ and $\zeta_{i,h}\in \sU(\P\,\overline{\otimes}\,\P)$, by using that $\P\,\overline{\otimes}\,\P\subset\M\,\overline{\otimes}\,\M$ is a Cartan subalgebra and approximating $u_j,u_i,z$ in $\|\cdot\|_2$, we find a finite set $F\subset B\times B$ such that $P_{F\varepsilon_i(h)F}(\widetilde\zeta_h)\not=0$, for every $h\in B$. Since $\zeta_{j,h}\in \sU(\P\,\overline{\otimes}\,\P)$, the last displayed equation implies that $\varepsilon_j(h)\in F\varepsilon_i(h)F$, for every $h\in B$.}
If $g\in B\setminus\{e\}$, then as $ B$ is ICC and $\varepsilon_{i,1}( B),\varepsilon_{i,2}( B)$ have finite index in $ B$, the sets $\{\varepsilon_{i,1}(h)g\varepsilon_{i,1}(h)^{-1}\mid h\in B\}$ and $\{\varepsilon_{i,2}(h)g\varepsilon_{i,2}(h)^{-1}\mid h\in B\}$ are infinite.  Thus, the set $\{\varepsilon_i(h)g\varepsilon_i(h)^{-1}\mid h\in B\}$ is infinite, for every $g\in ( B\times B)\setminus\{(e,e)\}$. By  \cite[Lemma 7.1]{BV13} we derive the existence of $g\in B\times B$ such that $\varepsilon_j(h)=g\varepsilon_i(h)g^{-1}$, for every $h\in B$. This finishes the proof of Step 4.
\end{proof}

Step 4 thus gives a homomorphism $\delta=(\delta_1,\delta_2)\colon  B\rightarrow B\times B$ such that for every $1\leq i\leq k$, there is $g_i\in B\times B$ satisfying $\varepsilon_i(h)=g_i\delta(h)g_i^{-1}$, for every $h\in B$.
After replacing $\theta_i$ with $\alpha(g_i^{-1})\circ\theta_i$, we may assume that $\varepsilon_i=\delta$, for any $1\leq i\leq k$. Hence, \eqref{conjug}
rewrites as \begin{equation}\label{conjug1}\text{$u_i\Delta(u_{\widehat{h}})\omega_hp_iu_i^*=\zeta_{i,h}u_{(\widehat{\delta_1(h)},\widehat{\delta_2(h)})}\otimes e_i$, for every $1\leq i\leq k$ and $h\in B$.}\end{equation}
Let $u=\sum_{i=1}^ku_ip_i$ and $e=\sum_{i=1}^ke_i$. Then $u$ is a partial isometry with $uu^*=1\otimes 1\otimes e$, $u^*u=p$, $u\sP pu^*=\sP(1\otimes 1\otimes e)$. If $\zeta_h=\sum_{i=1}^k\zeta_{i,h}\otimes e_i\in\sU(\sP(1\otimes 1\otimes e))$, then \eqref{conjug1} gives

\begin{equation}\label{conjug2}
\text{$u\Delta(u_{\widehat{h}})\omega_hu^*=\zeta_h(u_{(\widehat{\delta_1(h)},\widehat{\delta_2(h)})}\otimes e)$, for every $h\in G$.}
\end{equation}

In particular, $t=(\tau\otimes\tau\otimes\text{Tr})(p)=\text{Tr}(e)=k$, and so $n=t=k$, $e=1$ and $p=1\otimes 1\otimes 1$. Thus, after replacing $\Delta$ with $\text{Ad}(u)\circ\Delta$, we have that $\zeta_h\in\sP$ and \eqref{conjug2} rewrites as

\begin{equation}\label{conjug3}
\text{$\Delta(u_{\widehat{h}})\omega_h=\zeta_h(u_{(\widehat{\delta_1(h)},\widehat{\delta_2(h)})}\otimes 1)$, for every $h\in B$.}
\end{equation}

{\it Step 5.} $\Q\subset \P\,\overline{\otimes}\,\P\,\overline{\otimes}\,\mathbb M_n(\mathbb C).$

\begin{proof}
Since $(\Delta(u_{\widehat{h}})\omega_h)_{h\in  B}\subset\sU(\sM)$ normalizes $\Q$ and $(\zeta_h)_{h\in B}\subset\sU(\sP)$,  \eqref{conjug3} implies that $(u_{(\widehat{\delta_1(h)},\widehat{\delta_2(h)})}\otimes 1)_{h\in B}$ normalizes $\Q$. Since $\sP\subset\Q$ and $(\Q)_1\subset\sum_{i=1}^k(\sP)_1x_i$, to prove the claim
 it suffices to argue that  for all $x,y\in \M\,\overline{\otimes}\,\M$ and $z\in (\M\,\overline{\otimes}\,\M)\ominus (\P\,\overline{\otimes}\,\P)$ we have
$$
\text{$\|E_{\P\,\overline{\otimes}\,\P}(xu_{(\widehat{\delta_1(h_m)},\widehat{\delta_2(h_m)})}zu_{(\widehat{\delta_1(h_m)},\widehat{\delta_2(h_m)})}^*y)\|_2\rightarrow 0$.}
$$

To prove this, we may assume that $x=u_a,y=u_b,z=u_g$, for $a,b,g\in G\times G$ with $g\notin (A^{( B)}\times A^{( B)})$. Write $(\pi\times\pi)(a)=(a_1,a_2),(\pi\times\pi)(b)=(b_1,b_2), (\pi\times\pi)(g)=(g_1,g_2)$. Since $(g_1,g_2)\not=(e,e)$, we have that $g_1\not=e$ or $g_2\not=e$.

Suppose that $g_1\not=e$.
For $h\in B$, denote $s_h=\|E_{\P\,\overline{\otimes}\,\P}(xu_{(\widehat{\delta_1(h)},\widehat{\delta_2(h)})}zu_{(\widehat{\delta_1(h)},\widehat{\delta_2(h)})}^*y)\|_2$. If $s_h=0$, for every $h\in B$, the assertion follows. Otherwise, if $s_h\not=0$ for $h\in B$,  then
$a_1\delta_1(h)g_1\delta_1(h)^{-1}b_1=e$.
 {This is because  $E_{\P\,\overline{\otimes}\,\P}(u_{(k_1,k_2)})\not=0$, for some $(k_1,k_2)\in G\times G$,  if and only if $(k_1,k_2)\in A^{(B)}\times A^{(B)}$ and if and only if $\pi(k_1)=\pi(k_2)=e$.}
In particular, there is $l\in  B$ such that $a_1lg_1l^{-1}b_1=e$. Moreover, if $s_{h_m}\not=0$ for some $m\in\mathbb N$, then  {$\delta_1(h_m)g_1\delta_1(h_m)^{-1}=a_1^{-1}b_1^{-1}=lg_1l^{-1}$ and therefore} $\delta_1(h_m)\in lB_0$, where $B_0=\text{C}_B(g_1)$. Let $G_0=\pi^{-1}(B_0)$.
In combination with \eqref{conjug3}, we get that $\Delta(u_{\widehat{h_m}})\omega_{h_m}\in (u_{\widehat{l}}\otimes 1\otimes 1)(\text{L}(G_0)\,\overline{\otimes}\,\M\,\overline{\otimes}\, \mathbb M_n(\mathbb C))$, for any such $m\in\mathbb N$.
Since $g_1\not=e$ and $B$ is ICC,  $B_0<B$ and thus $G_0<G$ has infinite index. Thus,  \eqref{v_h_m} implies that $\{m\in\mathbb N\mid s_{h_m}\not=0\}$ is finite, proving that $s_{h_m}\rightarrow 0$. Similarly, assuming that $g_2\not=e$ also implies that $s_{h_m}\rightarrow 0$. \end{proof}

Next, Step 5 implies that $\omega_h\in \P\,\overline{\otimes}\,\P\,\overline{\otimes}\,\mathbb M_n(\mathbb C)$, for every $h\in B$. Thus, if we denote $\eta_h=\zeta_h\text{Ad}(u_{(\widehat{\delta_1(h)},\widehat{\delta_2(h)})}\otimes 1)(\omega_h^*)$, then $\eta_h\in \P\,\overline{\otimes}\,\P\,\overline{\otimes}\,\mathbb  M_n(\mathbb C)$ and
\begin{equation}\label{Delt}
\text{$\Delta(u_{\widehat{h}})=\eta_h(u_{\widehat{\delta_1(h)}}\otimes u_{\widehat{\delta_2(h)}}\otimes 1)$, for every $h\in B$.}
\end{equation}

{\it Step 6.}
We may assume that $\delta_1=\delta_2=\text{Id}_{ B}$.

\begin{proof}
The proof is an adaptation of Step 5 in the proof of \cite[Theorem 8.2]{IPV10}. We first argue that we may assume that $\delta_1=\delta_2$. Using \eqref{Delta} and \eqref{Delt}, for every $h\in B$ we get that \begin{equation}\label{Del}\text{$\Delta_0(u_{\widehat{h}}\otimes 1)=U\psi(\Delta(u_{\widehat{h}})\otimes 1)U^*=U\psi(\eta_h\otimes 1)(u_{\widehat{\delta_1(h)}}\otimes 1\otimes u_{\widehat{\delta_2(h)}}\otimes 1)U^*$.}\end{equation}

Since ${\nabla}\circ\Delta_0=\Delta_0$, denoting $V=U^*{\nabla}(U)$  and $V_h=(\psi(\eta_h\otimes 1)^*V{\nabla}(\psi(\eta_h\otimes 1))$, we have

\begin{equation}\label{V_h}\text{$V_h(u_{\widehat{\delta_2(h)}}\otimes 1\otimes u_{\widehat{\delta_1(h)}}\otimes 1)=(u_{\widehat{\delta_1(h)}}\otimes 1\otimes u_{\widehat{\delta_2(h)}}\otimes 1)V$, for every $h\in B$.}\end{equation}

For $F_1,F_2\subset B$ finite, let $\mathcal H_{F_1,F_2}$ be the $\|\cdot\|_2$-closed linear span of $$\{u_{g_1}\otimes x_1\otimes u_{g_2}\otimes x_2\mid g_1\in\pi^{-1}(F_1),g_2\in\pi^{-1}(F_2),x_1,x_2\in\mathbb M_n(\mathbb C)\}$$ and $P_{F_1,F_2}$ be the orthogonal projection from $\text{L}^2(\M\,\overline{\otimes}\,\mathbb M_n(\mathbb C)\,\overline{\otimes}\,\M\,\overline{\otimes}\,\mathbb M_n(\mathbb C))$ onto $\mathcal H_{F_1,F_2}$. Let $F_1,F_2\subset\Gamma$ be finite sets such that $\|V-P_{F_1,F_2}(V)\|_2<1/2$. Since $\eta_h\in \P\,\overline{\otimes}\,\P\,\overline{\otimes}\,\mathbb M_n(\mathbb C)$ we get that $\psi(\eta_h\otimes 1),\zeta(\psi(\eta_h\otimes 1))\in \P\,\overline{\otimes}\,\mathbb M_n(\mathbb C)\,\overline{\otimes}\,\P\,\overline{\otimes}\,\mathbb M_n(\mathbb C)$, for every $h\in B$.  Since $\mathcal H_{F_1,F_2}$ is a $\P\,\overline{\otimes}\,\mathbb M_n(\mathbb C)\,\overline{\otimes}\,\P\,\overline{\otimes}\,\mathbb M_n(\mathbb C)$-bimodule, we further derive that $\|V_h-P_{F_1,F_2}(V_h)\|_2<1/2$, for every $h\in B$. In combination with \eqref{V_h}, for every $h\in B$ we get that $$\langle P_{F_1,F_2}(V_h)(u_{\widehat{\delta_2(h)}}\otimes 1\otimes u_{\widehat{\delta_1(h)}}\otimes 1),(u_{\widehat{\delta_1(h)}}\otimes 1\otimes u_{\widehat{\delta_2(h)}}\otimes 1)P_{F_1,F_2}(V)\rangle>0.$$

Note that $(u_{\widehat{g_1}}\otimes 1\otimes u_{\widehat{g_2}}\otimes 1)\mathcal H_{F_1,F_2}(u_{\widehat{h_1}}\otimes 1\otimes u_{\widehat{h_2}}\otimes 1)=\mathcal H_{g_1F_1h_1,g_2F_2h_2}$, for every $g_1,g_2,h_1,h_2\in B$.
Moreover, if $F_1\cap G_1=\emptyset$, then $P_{F_1,F_2}P_{G_1,G_2}=0$. Thus, we get that
$F_1\delta_2(h)\cap \delta_1(h)F_1\not=\emptyset$, for every $h\in B$.
Since $ B$ is ICC and $\delta_1( B)< B$ has finite index, it follows that there exists $g\in B$ such that $\delta_2(h)=g\delta_1(h)g^{-1}$, for every $h\in B$ (see \cite[Lemma 7.1]{BV13}). Thus, after replacing $\Delta$ by $\text{Ad}(1\otimes u_g^*\otimes 1)\circ\Delta$ and $\eta_h$ by $\text{Ad}(1\otimes u_g^*\otimes 1)(\eta_h)\in \P\,\overline{\otimes}\,\P\,\overline{\otimes}\,\mathbb M_n(\mathbb C)$, we may assume that $\delta_1=\delta_2$.
Put $\delta=\delta_1=\delta_2$.

 To argue that $\delta$ is inner, define $X_1, X_1^h, X_2, X_2^h\in  \,\overline{\otimes}\,_{k=1}^3(\M\,\overline{\otimes}\,\mathbb M_n(\mathbb C))$, for $h\in B$, as follows:
 $X_1=(U\otimes 1\otimes 1)^*(\Delta_0\otimes\text{Id})(U)^*$,
$X_1^h=(\Delta_0\otimes\text{Id})(U\psi(\eta_h\otimes 1))(U\psi(\eta_{\delta(h)}\otimes 1))\otimes 1\otimes 1)$, $X_2=(1\otimes 1\otimes U)^*(\text{Id}\otimes\Delta_0)(U)^*$, and $X_2^h=(\text{Id}\otimes\Delta_0)(U\psi(\eta_h\otimes 1))(1\otimes 1\otimes U\psi(\eta_{\delta(h)}\otimes 1))$.

Then for every $h\in B$ we have that
$$\text{$(\Delta_0\otimes\text{Id})\Delta_0(u_{\widehat{h}}\otimes 1)=X_h^1(u_{\widehat{\delta(\delta(h))}}\otimes 1\otimes u_{\widehat{\delta(\delta(h))}}\otimes 1\otimes u_{\widehat{\delta(h)}}\otimes 1)X_1$ and}$$
$$\text{$(\text{Id}\otimes\Delta_0)\Delta_0(u_{\widehat{h}}\otimes 1)=X_h^2(u_{\widehat{\delta(h)}}\otimes 1\otimes u_{\widehat{\delta(\delta(h))}}\otimes 1\otimes u_{\widehat{\delta(\delta(h))}}\otimes 1)X_2$.} $$

Since $(\Delta_0\otimes\text{Id})\circ\Delta_0=(\text{Id}\otimes\Delta_0)\circ\Delta_0$, by adapting the above argument we can find $F\subset B$ finite such that $F\delta(\delta(h))\cap \delta(h)F\not=\emptyset$, for every $g\in B$. Since $\delta( B)< B$ has finite index and $ B$ is ICC, this implies that there is $g\in B$ such that $\delta(h)=ghg^{-1}$, for every $h\in B$ (see \cite[Lemma 7.1]{BV13}).  Thus, after replacing $\Delta$ by $\text{Ad}(u_g^*\otimes u_g^*\otimes 1)\circ\Delta$ and $\eta_h$ by $\text{Ad}(u_g^*\otimes u_g^*\otimes 1)(\eta_h)\in \P\,\overline{\otimes}\,\P\,\overline{\otimes}\,\mathbb M_n(\mathbb C)$, we may assume that $\delta_1=\delta_2=\text{Id}_{ B}$, that is \begin{equation}\text{$\Delta(u_{\widehat{h}})=\eta_h(u_{\widehat{h}}\otimes u_{\widehat{h}}\otimes 1)$, for every $h\in B$.}
\end{equation}
 This finishes the proof of Step 6.
\end{proof}

To finish the proof of Theorem \ref{superr},
let $g\in G$. Let $h=\pi(g)\in B$ and $a=g\widehat{h}^{-1}\in A^{(I)}$.
Then $\Delta(u_g)=\Delta(u_a)\Delta(u_{\widehat{h}})=\Delta(u_a)\eta_h(u_{\widehat{h}}\otimes u_{\widehat{h}}\otimes 1)=\Delta(u_a)\eta_h(u_a\otimes u_a\otimes 1)^*(u_g\otimes u_g\otimes 1)$. Thus, if we denote $w_g=\Delta(u_a)\eta_h(u_a\otimes u_a\otimes 1)^*$, then $w_g\in \sU(\P\,\overline{\otimes}\,\P\,\overline{\otimes}\,\mathbb M_n(\mathbb C))$ and \begin{equation}\label{w_g'}
\text{$\Delta(u_g)=w_g(u_g\otimes u_g\otimes 1)$, for every $g\in G$.}
\end{equation}

Consider the action $G\curvearrowright^{\gamma} \P\,\overline{\otimes}\,\P$ given by $\gamma_g=\text{Ad}(u_g\otimes u_g)$, for $g\in G$. Lemma \ref{coind} implies that $\gamma$ is  {conjugate} to an action $G\curvearrowright (Y^J,\nu^J)$ built over  $G\curvearrowright J=I\times\{1,2\}$ given by $g\cdot (i,j)=(\pi(g)\cdot i,j)$, for every $g\in G$ and $(i,j)\in J$. Since the action $G\curvearrowright J$ has infinite orbits, $\gamma$ is weakly mixing. Moreover,
 \eqref{w_g'} gives that $w_{gh}=w_g(\gamma_g\otimes\text{Id})(w_h)$, for every $g,h\in G$. Therefore, $(w_g)_{g\in G}$ is a $1$-cocycle for $\gamma\otimes\text{Id}$, with $\text{Id}$ the trivial action on ${\mathbb M_n(\mathbb C)}$. 

Since  $G$ has property (T), Theorem \ref{builtover} gives $u\in \sU(\P\,\overline{\otimes}\,\P\,\overline{\otimes}\,\mathbb M_n(\mathbb C))$ and a homomorphism $\xi\colon G\rightarrow\sU_n(\mathbb C)$ such that $w_g=u^*(1\otimes 1\otimes\xi_g)(\gamma_g\otimes\text{Id})(u)$, for every $g\in G$. Thus, after replacing $\Delta$ by $\text{Ad}(u)\circ\Delta$, \eqref{w_g'} rewrites as   \begin{equation}\label{xi_g}\text{$\Delta(u_g)=u_g\otimes u_g\otimes\xi_g$, for every $g\in G$.}\end{equation}

Let $\sN$ be the von Neumann algebra generated by $\{u_g\otimes u_g\otimes x\mid g\in G,x\in\mathbb M_n(\mathbb C)\}$.
Then $\Delta(\M)\subset\sN\subset\Delta(\M)\mathbb M_n(\mathbb C)$. By Lemma \ref{comultiply} (c), $\sN=\Delta(\M)$, so $1\,\overline{\otimes}\,1\,\overline{\otimes}\,\mathbb M_n(\mathbb C)\subset\Delta(\M)$. In combination with \eqref{xi_g}, this implies that $n=1$ and hence $t=1$.
Also, $\xi_g\in\mathbb T$ and
\begin{equation}\label{untwist}
\text{$\Delta(u_g)=\xi_g(u_g\otimes u_g)$, for every $g\in G$.}
\end{equation}
Moreover, there is $\Omega\in \sU(\M\,\overline{\otimes}\,\M)$ so that $\Delta_0=\text{Ad}(\Omega)\circ\Delta$.
By \eqref{untwist}, $\Omega(u_g\otimes u_g)\Omega^*\in\Delta_0(\M)$, for every $g\in G$. Since $G$ is ICC, the unitary representation $(\text{Ad}(u_g))_{g\in G}$ of $G$ on $\text{L}^2(\M)\ominus\mathbb C1$ is weakly mixing. By applying \cite[Lemma 3.4]{IPV10} we conclude that there are $w\in\sU(\M)$ and an isomorphism $\rho\colon G\rightarrow H$ such that $u_g=\xi_gwv_{\rho(g)}w^*$, for every $g\in G$.
\end{proof}

\addcontentsline{toc}{section}{References}

\end{document}